\newtheorem{Lem}{Lemma}[section]
\newtheorem{Def}[Lem]{Definition}
\newtheorem{Teo}[Lem]{Theorem}
\newtheorem{Prop}[Lem]{Proposition}
\newtheorem{remark}[Lem]{Remark}
\theoremstyle{definition} 
\newtheorem{Exam}[Lem]{Example}
\newcommand{\tord}{{\rm tord}}
\newcommand{\itord}{{\rm itord}}
\newcommand{\Lsing}{{\rm Lsing}}
\newcommand{\field}[1]{\mathbb{#1}}
\newcommand{\N}{\field{N}}
\newcommand{\R}{\field{R}}
\newcommand{\F}{\field{F}}
\title[On the minimality of pancake decomposition of surface germs]{On the minimality of pancake decomposition of surface germs}
\author[D. L. Medeiros]{Davi Lopes Medeiros$\dagger$}
\thanks{$\dagger$Researcher supported by the Serrapilheira Institute (grant number Serra -- R-2110-39576) and by FAPESP (grant number FAPESP: 2024/13488-6).}
\address{$\dagger$Departamento de Matem\'atica, Universidade Federal do Cear\'a (UFC), Campus do Pici, Bloco 914, Cep.~60455-760, Fortaleza-Ce, Brasil}
\address{$\dagger$Departamento de Matem\'atica, Instituto de Ci\^encias Matem\'aticas e de Computa\c{c}\~ao (ICMC-USP). Avenida Trabalhador São-carlense, 400, Centro, 13566-590. São Carlos, SP, Brasil}
\email{davi\_lopes90@hotmail.com \& profdavilopes@gmail.com}
\author[E. Silva]{Euripedes da Silva$\star$}
\thanks{$\star$Researcher partially supported by the Serrapilheira Institute (grant number Serra -- R-2110-39576).}
\address{$\star$Departamento de Matem\'atica, Instituto Federal de Educação, Ciência e Tecnologia Cear\'a (IFCE), Av. Parque Central, 1315 - Distrito Industrial I, Maracanaú - CE, 61939-140, Brasil}
\email{euripedes.carvalho@ifce.edu.br}
\author[E. Souza]{Emanoel Souza$\sharp$}
\address{$\sharp$Departamento de Matem\'atica, Universidade Estadual do Cear\'a (UECE), Avenida Dr. Silas Munguba, nº. 1700, bairro Itaperi, Fortaleza/CE, Brasil}
\email{emanoel.souza@uece.br}
\subjclass[2000]{}
\keywords{Lipschitz Geometry, Circular Snakes, Abnormal surfaces, germ of singularities}
\begin{document}

\begin{abstract}
 The abnormal surfaces called snakes and circular snakes, defined in \cite{GabrielovSouza}, are special types of surface germs capturing the outer Lipschitz phenomena relevant to the outer classification problem. We provide algorithms to obtain a minimal pancake decomposition, i.e., where the number of pancakes is minimal, for snakes and circular snakes. We call a pancake decomposition obtained from our algorithm a greedy pancake decomposition. We also prove that greedy pancake decompositions of weakly outer Lipschitz equivalent snakes (or circular snakes) are weakly equivalent, in the sense that there is a weakly outer bi-Lipschitz homeomorphism between the surfaces mapping each greedy pancake to a greedy pancake. This implies that such minimal decompositions are also canonical up to weakly outer bi-Lipschitz equivalence.
\end{abstract}

\maketitle

\section{Introduction}
For the past two decades, Lipschitz geometry of singularities has garnered significant interest as a natural method for classifying singularities, striking a balance between their bi-regular (too fine) and topological (too coarse) equivalences. Notably, the finiteness theorems presented in \cite{Mostowski85Dissertation} and \cite{Parusinski94} indicate the potential for an effective bi-Lipschitz classification of definable real surface germs. 

As demonstrated in \cite{LBirbMosto2000NormalEmbedding}, every singular germ (of a semialgebraic set) $X$ admits two metrics from its surrounding space: the inner metric where the distance between two points of $X$ is the length of the shortest path connecting them inside $X$, and the outer metric with the distance between two points of $X$ being just their distance in the ambient space. This defines two classification problems: equivalence up to bi-Lipschitz homeomorphisms with respect to the inner and outer metrics, or simply, the ``inner classification problem'' and the ``outer classification problem''. 

When $X$ is a surface germ (definable in a polynomially bounded o-minimal structure over the reals), the problems above have different outcomes. The inner classification problem was solved by Birbrair in \cite{birbrair1999local} and \cite{birbrairOminimal} but the outer classification problem remains open. Birbrair showed that any semialgebraic surface germ with a link homeomorphic to a line segment is bi-Lipschitz equivalent with respect to the inner metric to the standard $\beta$-H\"older triangle $T_{\beta}=\{ (x,y)\in \R^2 \mid 0\le x\le 1,\;0\le y\le x^\beta\}$. Moreover, any semialgebraic surface with an isolated singularity and connected link is bi-Lipschitz equivalent to a $\beta$-horn $H_\beta = \{(x,y,z)\in \R^3 \mid z\ge 0, \; x^2+y^2=z^{2\beta}\}$. Later developments in the direction of a solution for the outer metric classification of surface germs were given in \cite{PizzaPaper2017}, \cite{GabrielovSouza} and ~\cite{normalpairs}. In \cite{GabrielovSouza}, Gabrielov and Souza identified basic ``abnormal'' parts of a surface germ, called snakes, and investigated their geometric and combinatorial properties. Indeed, they showed that any given surface germ is either a circular snake or contains finitely many snakes. 

The Lipschitz normally embedded (LNE for short) singularities are the ones where the inner and outer metrics are equivalent, thus the two classifications are the same in this case. It was proved in \cite{KurdykaOrro97} that any semialgebraic set can be decomposed into the union of finitely many normally embedded semialgebraic sets. Later, in \cite{LBirbMosto2000NormalEmbedding}, Birbrair and Mostowski called this decomposition a ``pancake decomposition'' and used Kurdyka's construction to prove that any given semialgebraic set is inner Lipschitz equivalent to a normally embedded semialgebraic set.

We say that a pancake decomposition of a surface germ is reduced if the union of any two pairs of adjacent (with nonempty intersection outside the origin) pancakes is not LNE. We say that it is minimal if the number of pancakes is minimal. Despite Kurdyka's Theorem showing that a pancake decomposition always exists for any given semialgebraic set, which also proves that a minimal pancake decomposition always exists, it gives no hint on how to obtain such a decomposition, not even for surface germs. Indeed, as shown in Example \ref{Exam: two non-equiv decomp}, it is not always possible to obtain a minimal pancake decomposition from a given pancake decomposition. In this work we provide algorithms to obtain minimal pancake decompositions for especial types of surface germs, the so called snakes and circular snakes, which are fundamental for the outer classification problem, as evidenced by Gabrielov and Souza in \cite{GabrielovSouza}. Any pancake decomposition obtained by such algorithms is called a greedy pancake decomposition.

Anyone aiming the goal of solving the outer classification problem will necessarily need to classify snakes and circular snakes up to outer bi-Lipschitz homeomorphisms. In fact, a weak version of this classification problem, the so called weak classification problem, considering weakly outer bi-Lipschitz homeomorphisms (See Definition \ref{Def:weak equivalence}) instead of outer bi-Lipschitz homeomorphisms, was solved for snakes by Gabrielov and Souza in \cite{GabrielovSouza}, and for circular snakes by Costa, Medeiros and Souza in \cite{circular-snakes}. We also prove in this work that two greedy pancake decompositions of two weakly outer Lipschitz equivalent snakes (or circular snakes) are also weakly equivalent, in the sense that there is a weakly outer bi-Lipschitz homeomorphism between the surfaces mapping each greedy pancake to a greedy pancake (See Definition \ref{Def: equiv of pancake decomp}). This also shows that the greedy decomposition is canonical up to weakly outer bi-Lipschitz equivalence. Otherwise specified we will be using the notion of ``canonical'' along this text having this weak equivalence in mind. It is worth noticing that it follows from the weak classification theorems mentioned in the previous paragraph that the pancake decomposition presented in Proposition 4.56 of \cite{GabrielovSouza} for snakes (respectively, Corollary 3.37 of \cite{circular-snakes} for circular snakes with nodal zones) is canonical, however, it is not necessarily minimal. 

One could establish the notion of outer equivalence of pancake decompositions considering outer bi-Lipschitz homeomorphisms instead of weakly outer bi-Lipschitz ones in Definition \ref{Def: equiv of pancake decomp}. However, as demonstrated in Example \ref{Exam: not outer equiv pancake decomp}, neither the pancake decomposition obtained for snakes in Proposition 4.56 of \cite{GabrielovSouza} (respectively, for circular snakes with nodal zones in Corollary 3.37 of \cite{circular-snakes}) nor the greedy pancake decomposition are  canonical with respect to this outer equivalence. Therefore, the canonicity obtained in this paper for weak equivalence is sharp.

This article is organized as follows. In Section 2 we recall the necessary notions of Lipschitz Geometry related to the paper. It may look quite extensive for an experienced reader, but in order to facilitate the reading, we decided to show most of the main results used instead of just citing them. In section 3 we also recall the definition of pancake decomposition and present some interesting facts about the notions of reduced and minimal decompositions (See Example \ref{Exam: two non-equiv decomp}). Moreover, in this section we introduce the weakly equivalence of pancakes (See Definition \ref{Def: equiv of pancake decomp}) and the minimality problem. Section 4 is devoted to presenting the algorithm that produces the so called greedy pancake decomposition for snakes through their minimal sequences (See Definition \ref{Def: Greedy decomposition in Snakes}) and also prove that they minimal (See Theorem \ref{Teo: minimal pancake decomposition}). In Section 5 we address the necessary adaptations to the previous algorithm so we can determine a greedy pancake decomposition for circular snakes and establish their minimality (See Definition \ref{Def: Greedy decomposition in circ Snakes no nodes} and \ref{Def: Greedy decomposition in circ Snakes with nodes} and Theorems \ref{Teo: minimal-circular-no-nodes} and \ref{Teo:Minimal-pancake-circular-snakes-with-nodes}). In Section 6 we prove that greedy decompositions are also canonical up to weakly outer bi-Lipschitz equivalence (See Propositions \ref{Prop: canonical-snakes}, \ref{Prop: canonical-circular-snakes-with-nodes} and \ref{Prop: canonicity for circular snakes with nodal zones}). Finally, in Section 7, we provide final remarks on the greedy pancake decomposition. We demonstrate that the greedy algorithm fails to provide a minimal pancake decomposition for H\"older triangles in general, although it works for snakes and circular snakes (See Example \ref{Exam: greedy algorithm failing for HT}). We also emphasize the sharpness of the canonicity of the greedy pancake decomposition for snakes and circular snakes by showing that it would not be canonical if we consider outer equivalence instead of weak equivalence (See Example \ref{Exam: not outer equiv pancake decomp}).


We would like to thank Lev Birbrair and Andrei Gabrielov for productive discussions on Lipschitz geometry of surfaces and their interest in the results of this article. We also would like to thank Edson Sampaio for his interest and encouragement regarding this manuscript.

\section{Preliminaries}\label{Section: Preliminaries}

All sets, functions and maps in this paper are assumed to be definable in a polynomially bounded o-minimal structure over $\mathbb{R}$ with the field of exponents $\mathbb{F}$, for example, real semialgebraic or subanalytic (see \cite{LvandenDRIES98} and \cite{DRIES-SPREISSEGGER}). Unless the contrary is explicitly stated, we consider germs at the origin of all sets and maps.

\subsection{Basic concepts in Lipschitz geometry}\label{Subsec: Holder triangles}

We present the necessary nomenclature and requisites in Lipschitz geometry of germs for the proper understanding of this paper. Most of the preliminaries used in the subsection are included with much more details in the survey paper \cite{livro}.

\begin{Def}\label{DEF: inner, outer and normally embedded}
	 Given a germ at the origin of a set  $X\subset \mathbb{R}^{n}$ we can define two metrics on $X$, the \textbf{outer metric}  $d(x,y)=||x-y||$ and the \textbf{inner metric} $d_{i}(x,y)=\inf_{\alpha}\{l(\alpha)\}$, where $l(\alpha)$ is the length of a rectifiable path $\alpha$ from $x$ to $y$ in $X$. Note that such a path $\alpha$ always exists since $X$ is definable. A set $X \subset \R^{n}$ is \textbf{Lipschitz normally embedded} (\textbf{LNE} for short) if the outer and inner metrics are equivalent.
\end{Def}

\begin{remark}
	 The inner metric is not definable, but it is equivalent to a definable metric (see \cite{KurdykaOrro97}), for example, the \textbf{pancake metric} (see \cite{LBirbMosto2000NormalEmbedding}).
\end{remark}

\begin{Def}\label{Def: arc}
		An \textbf{arc} in $\mathbb{R}^{n}$ is a germ at the origin of a mapping $\gamma \colon [0,\epsilon) \longrightarrow \mathbb{R}^{n}$ such that $\gamma(0) = 0$. Unless otherwise specified, we suppose that arcs are parameterized by the distance to the origin, i.e., $||\gamma(t)||=t$. We usually identify an arc $\gamma$ with its image in $\mathbb{R}^{n}$. For a germ at the origin of a set $X$, the set of all arcs $\gamma \subset X$ is denoted by $V(X)$ (known as the \textbf{Valette link of $X$}, see \cite{valette2007link}).
\end{Def}

\begin{Def}\label{DEF: order of tangency}
	 The \textbf{tangency order} of two arcs $\gamma_{1}$ and $\gamma_{2}$ in $V(X)$ (notation $\tord(\gamma_{1},\gamma_{2})$) is the exponent $q$ where $||\gamma_{1}(t) - \gamma_{2}(t)|| = ct^{q} + o(t^{q})$ with $c\neq 0$. By convention, $\operatorname{tord}(\gamma,\gamma)=\infty$. For an arc $\gamma$ and a set of arcs $Z \subset V(X)$, the tangency order of $\gamma$ and $Z$ (notation $\tord(\gamma, Z)$), is the supremum of $\tord(\gamma, \lambda)$ over all arcs $\lambda \in Z$. The tangency order of two sets of arcs $Z$ and $Z'$ (notation $\tord(Z,Z')$) is the supremum of $\tord(\gamma, Z')$ over all arcs $\gamma \in Z$. Similarly, we define the tangency orders in the inner metric of $X$, denoted by $\itord(\gamma_{1},\gamma_{2}),\; \itord(\gamma, Z)$ and $\itord(Z,Z')$.
\end{Def}

\begin{remark}\label{Rem: non-archimedean property}
	 An interesting fact about the tangency order of arcs in $\R^n$ is the so called ``non-archimedean property'' (it first appeared in \cite{B.F.METRIC-THEORY} as ``Isosceles property''): given arcs $\gamma_1,\gamma_2,\gamma_3$ in $\R^n$, we have $$\tord(\gamma_2,\gamma_3) \ge \min(\tord(\gamma_1,\gamma_2),\tord(\gamma_1,\gamma_3)).$$ If $\tord(\gamma_1,\gamma_2)\ne \tord(\gamma_1,\gamma_3)$ then $\tord(\gamma_2,\gamma_3) = \min(\tord(\gamma_1,\gamma_2),\tord(\gamma_1,\gamma_3))$.
\end{remark}

\begin{Def}\label{DEF: standard Holder triangle}
	 For $\beta \in \mathbb{F}$, $\beta \ge 1$, the \textbf{standard $\beta$-H\"older triangle} $T_\beta\subset\R^2$ is the germ at the origin of the set
	\begin{equation*}
		T_\beta = \{(x,y)\in \R^2 \mid 0\le x\le 1, \; 0\le y \le x^\beta\}.
	\end{equation*}
	The curves $\{x\ge 0,\; y=0\}$ and $\{x\ge 0,\; y=x^\beta\}$ are the  \textbf{boundary arcs} of $T_\beta$.

    For $\beta \in \mathbb{F}$, $\beta \ge 1$, the \textbf{standard  $\beta$-horn}  $H_\beta\subset\R^2$ is the germ at the origin of the set
	\begin{equation*}\label{Formula:Standard Holder triangle}
		H_\beta = \{(x,y,z)\in \R^3 \mid z\ge 0, \; x^2+y^2=z^{2\beta}\}.
	\end{equation*}
\end{Def}

\begin{Def}\label{DEF: Holder triangle}
	 A germ at the origin of a set $T \subset \mathbb{R}^{n}$ that is bi-Lipschitz equivalent with respect to the inner metric to the standard $\beta$-H\"older triangle $T_\beta$ is called a \textbf{$\beta$-H\"older triangle} (see \cite{birbrair1999local}).
	The number $\beta \in \mathbb{F}$ is called the \textbf{exponent of $T$} (notation $\beta=\mu(T)$). The arcs $\gamma_{1}$ and $\gamma_{2}$ of $T$ mapped to the boundary arcs of $T_\beta$ by the homeomorphism are the \textbf{boundary arcs of $T$} (notation $T=T(\gamma_{1},\gamma_{2})$). All other arcs of $T$ are \textbf{interior arcs}. The set of interior arcs of $T$ is denoted by $I(T)$.

    A germ at the origin of a set $H \subset \mathbb{R}^{n}$ that is bi-Lipschitz equivalent with respect to the inner metric to the standard $\beta$-horn $H_\beta$ is called a \textbf{$\beta$-horn} (see \cite{birbrair1999local}).
	The number $\beta \in \mathbb{F}$ is called the \textbf{exponent of $H$} (notation $\beta=\mu(H)$).
\end{Def}

\begin{remark}\label{Rem: NE HT condition}
	It was proved in \cite{birbrair1999local} that $\mu(T)$ and $\mu(H)$ are inner bi-Lipschitz invariants. Moreover, it was proved in \cite{birbrair2018arc}, using the Arc Selection Lemma (see Theorem 2.2), that a H\"older triangle $T$ is Lipschitz normally embedded if, and only if, $\tord(\gamma,\gamma')=\itord(\gamma,\gamma')$ for any two arcs $\gamma$ and $\gamma'$ of $T$.
\end{remark}

\begin{Def}\label{DEF: Lipschitz non-singular arc}
	 Let $X$ be a surface (a two-dimensional set). An arc $\gamma \subset X$ is \textbf{Lipschitz non-singular} if there exists a Lipschitz normally embedded H\"older triangle $T \subset X$ such that $\gamma$ is an interior arc of  $T$ and $\gamma \not\subset \overline{X\setminus T}$. Otherwise, $\gamma$ is \textbf{Lipschitz singular}. In particular, any interior arc of a Lipschitz normally embedded H\"older triangle is Lipschitz non-singular. The union of all Lipschitz singular arcs in $X$ is denoted by $\Lsing(X)$.
\end{Def}

\begin{remark}\label{Rem: boundary arcs are Lips sing arcs}
	 It follows from pancake decomposition (see Definition \ref{Def: pancake decomposition}) that a surface $X$ contains finitely many Lipschitz singular arcs. For an interesting example of a Lipschitz singular arc see Example 2.11 of \cite{GabrielovSouza}. Arcs which are boundary arcs of H\"older triangles or self-intersections of the surface are trivial examples of Lipschitz singular arcs.
\end{remark}

\begin{Def}\label{DEF: non-singular Holder triangle}
	 A H\"older triangle $T$ is \textbf{non-singular} if all interior arcs of $T$ are Lipschitz non-singular.
\end{Def}

\begin{Def}\label{Def: generic arc of a surface}
	 Let $X$ be a surface germ with connected link. The \textbf{exponent $\mu(X)$ of $X$} is defined as $\mu(X)=\min\, \itord(\gamma,\gamma')$, where the minimum is taken over all arcs $\gamma,\,\gamma'$ of $X$.
	A surface $X$ with exponent $\beta$ is called a $\beta$-surface. An arc $\gamma \subset X\setminus \Lsing(X)$ is \textbf{generic} if $\itord(\gamma,\gamma') = \mu(X)$ for all arcs $\gamma'\subset \Lsing(X)$. The set of generic arcs of $X$ is denoted by $G(X)$. 
\end{Def}

\begin{remark}\label{Rem: generic arcs of a non-singular HT}
	 If $X=T(\gamma_{1},\gamma_{2})$ is a non-singular $\beta$-H\"older triangle then an arc $\gamma\subset X$ is generic if, and only if, $\itord(\gamma_{1},\gamma) = \itord(\gamma,\gamma_{2}) = \beta$. 
     
     Let $X$ be the standard $\beta$-H\"older triangle for some $\beta\ge 1$ in $\F$. For any $n\in \N$, $n\ge 2$, the arcs $\delta_n(t) = \dfrac{x^\beta}{n}$ are generic arcs of $X$, although the sequence $\{\delta_n\}_{n=2}^{\infty}$ converges to the boundary arc $\{x\ge 0, \; y=0\}$. Similarly, $\{\tilde \delta_n\}_{n=2}^{\infty}$, where $\tilde \delta_n(t) = \left(1 - \dfrac{1}{n}\right)x^\beta$, is a sequence of generic arcs of $X$ converging to the boundary arc $\{x\ge 0, \; y=x^\beta\}$. Since inner tangency orders are preserved by inner bi-Lipschitz homeomorphisms, this argument proves that for any given H\"older triangle $X$ there exists a sequence of its generic arcs converging to one of its boundary arcs.

\end{remark}

\subsection{Zones, abnormal surfaces and snakes}\label{subsec 1.4}

Some of the definitions below were first introduced in \cite{LevMendes2018}, while the definition of snake is given in \cite{GabrielovSouza}. 

\begin{Def}\label{Def: zone}
	A nonempty set of arcs $Z \subset V(X)$ is a \textbf{zone} if, for any two distinct arcs $\gamma_{1}$ and $\gamma_{2}$ in $Z$, there exists a non-singular H\"older triangle $T=T(\gamma_{1},\gamma_{2}) \subset X$ such that $V(T) \subset Z$. If $Z = \{\gamma\}$ then $Z$ is a \textbf{singular zone}.
\end{Def}

\begin{Def}\label{Def: maximal zone in}
	Let $B \subset V(X)$ be a nonempty set. A zone $Z\subset B$ is \textbf{maximal in} $B$ if, for any H\"older triangle $T$ such that $V(T) \subset B$, one has either $Z\cap V(T)=\emptyset$ or $V(T) \subset Z$.
\end{Def}

\begin{remark}
	A zone could be understood as an analog of a connected subset of $V(X)$, and a maximal zone in a set $B$ is an analog of a connected component of $B$.
\end{remark}

\begin{Def}\label{Def:order of zone}
	The \textbf{order} $\mu(Z)$ of a zone $Z$ is defined as the infimum of $\operatorname{tord}(\gamma,\gamma')$ over all arcs $\gamma$ and $\gamma'$ in $Z$. If $Z$ is a singular zone, we define $\mu(Z) = \infty$. A zone $Z$ of order $\beta$ is called a $\beta$-zone.
\end{Def}

\begin{remark}
	The tangency order can be replaced by the inner tangency order in Definition \ref{Def:order of zone}. Note that, for any arc $\gamma \in Z$, $\inf_{\gamma'\in Z}\operatorname{tord}(\gamma,\gamma')=\inf_{\gamma'\in Z}\itord(\gamma,\gamma')=\mu(Z)$. Moreover, differently from Definition \ref{Def: generic arc of a surface}, the order of a zone could not be a minimum (for such an example, see Example 2.46 of \cite{GabrielovSouza}).
\end{remark}

\begin{Def}\label{Def: LNE zone}
	A zone $Z$ is LNE if, for any two arcs $\gamma$ and $\gamma'$ in $Z$, there exists a LNE H\"older triangle $T=T(\gamma,\gamma')$ such that $V(T)\subset Z$.
\end{Def}

\begin{Def}\label{DEF: normal and abnormal arcs and zones}
	A Lipschitz non-singular arc $\gamma$ of a surface germ $X$ is \textbf{abnormal} if there are two LNE H\"older triangles $T\subset X$ and $T'\subset X$ such that $T\cap T' = \gamma$ and $T\cup T'$ is not LNE.
	Otherwise $\gamma$ is \textbf{normal}. A zone is \textbf{abnormal} (resp., \textbf{normal}) if all of its arcs are abnormal (resp., normal). The sets of abnormal and normal arcs of $X$ are denoted by $Abn(X)$ and $Nor(X)$, respectively. A surface germ $X$ is called \textbf{abnormal} if $Abn(X)=G(X)$.
\end{Def}

\begin{Def}\label{Def: maximal abnormal and normal zones}
	Given an abnormal (resp., normal) arc $\gamma \subset X$, the \textbf{maximal abnormal zone} (resp., \textbf{maximal normal zone}) in $V(X)$ containing $\gamma$ is the union of all abnormal (resp., normal) zones in $V(X)$ containing $\gamma$. Alternatively, the maximal abnormal (resp., normal) zone containing $\gamma$ is a maximal zone in $Abn(X)$ (resp., $Nor(X)$) containing $\gamma$.
\end{Def}

\begin{remark}\label{max zones are unique}
	It follows from Definition \ref{DEF: normal and abnormal arcs and zones} that the property of an arc to be abnormal (resp., normal)
	is an outer bi-Lipschitz invariant: if $h:X\to X'$
	is an outer bi-Lipschitz map then $h(\gamma)\subset X'$ is an abnormal (resp., normal) arc for any abnormal (resp., normal) arc $\gamma\subset X$.
	Since the property of an arc to be abnormal (resp., normal) is outer Lipschitz invariant,
	maximal abnormal (resp., normal) zones in $V(X)$ are also outer Lipschitz invariant: if $h:X\to X'$
	is an outer bi-Lipschitz map then $h(Z)\subset V(X')$ is a maximal abnormal (resp., normal) zone for any maximal abnormal (resp., normal) zone $Z\subset V(X)$. Here, $h: V(X) \to V(X') $ denotes the natural action of $h$ on the space of arcs in $X$.
 
\end{remark}

\begin{Def}\label{Def:snake}
	A non-singular $\beta$-H\"older triangle $T$ is called a $\beta$\textbf{-snake} if $T$ is an abnormal surface (see Definition \ref{DEF: normal and abnormal arcs and zones}). Similarly, a non-singular $\beta$-horn $X$ is called a $\beta$\textbf{-circular snake} if $X$ is an abnormal surface.
\end{Def}

\subsection{Segments, nodal zones and weakly outer bi-Lipschitz maps}\label{Subsection: weak equivalence}

In this subsection we summarize the concepts of segments and nodal zones of snakes and circular snakes. We also recall how those invariant parts of the Vallete link of an abnormal surface are used to obtain the classification theorems summarized in Theorem \ref{Teo:weak equivalence}. Roughly speaking, the segments are zones where we have space to move an arc to both sides without changing the Lipschitz contact (this notion is translated through the multiplicity of this arc) of the surface with itself, while the nodal zones are the zones where this cannot be done. Since segments and nodal zones are canonical up outer bi-Lipschitz homeomorphisms, given an orientation we could use them to associate a word with a snake (resp., a circular snake). This word is the combinatorial object used in Theorem \ref{Teo:weak equivalence}.


\begin{Def}\label{Def:horn-neighbourhood}
	 Let $X$ be a surface and $\gamma \subset X$ an arc. For $a>0$ and $1\leq\alpha\in \mathbb{F}$, the \textbf{$(a,\alpha)$-horn neighborhood of $\gamma$ in $X$} is defined as follows: 
 $$
 \mathcal{H}X_{a,\alpha}(\gamma) = \bigcup_{0\le t \ll1} X\cap S(0,t)\cap \overline{B}(\gamma(t),at^{\alpha}),
 $$
	where $S(0,t)=\{x \in \mathbb{R}^{n}\mid ||x||=t\}$ and $\overline{B}(y,R) = \{x \in \mathbb{R}^{n}\mid ||x - y||\le R\}$.
\end{Def}

\begin{Def}\label{Def of multiplicity}
	 If $X$ is a $\beta$-snake (or circular $\beta$-snake) and $\gamma$ an arc in $X$, the \textbf{multiplicity} of $\gamma$, denoted by $m_{X}(\gamma)$ (or just $m(\gamma)$, when $X$ is understood), is defined as the number of connected components of the set $\mathcal{H}X_{a,\beta}(\gamma)\setminus \{0\}$, for every $a>0$ small enough.
\end{Def}

\begin{Def}\label{constant zone}
    Let $X$ be a $\beta$-snake (or circular $\beta$-snake) and $Z\subset V(X)$ a zone. We say that $Z$ is a \textbf{constant zone of multiplicity $q$} if all arcs in $Z$ have the same multiplicity $q$. We say that $\gamma \in V(X)$ is a \textbf{segment arc} if there exists a $\beta$-H\"older triangle $T \subset X$ such that $\gamma$ is a generic arc of $T$ and $V(T)$ is a constant zone. Otherwise $\gamma$ is a \textbf{nodal arc}. We denote the set of segment arcs and the set of nodal arcs in $X$ by $S(X)$ and $N(X)$, respectively. A \textbf{segment of $X$} is a maximal zone in $S(X)$. A \textbf{nodal zone} of $X$ is a maximal zone in $N(X)$. We write $Seg_{\gamma}$ or $Nod_{\gamma}$ for a segment or a nodal zone containing an arc $\gamma$.
\end{Def}

\begin{remark}\label{Rem:words}
	It was proved in \cite{GabrielovSouza} (resp., in \cite{circular-snakes}) that if $X$ is a snake (resp., circular snake) then its Valette link can be decomposed into finitely many disjoint segments and nodal zones. For snakes the segments and nodal zones are always LNE, while for circular snakes we may have segments not LNE, althoug this only happens for the ones without nodal zones. 
    
    Using this decomposition the authors in \cite{GabrielovSouza} (resp., in \cite{circular-snakes}) created a combinatorial object, $W(X)$ (resp., $[[W_N(X)]]$, where $N$ is a given nodal zone of $X$), associated with such snake (resp., circular snake) $X$. More specifically, $W(X)$ (resp., $W_N(X)$ is a word (resp., circular word), in some alphabet, say $\{x_1, x_2,\ldots \}$, satisfying two conditions (see Definitions 6.6 in \cite{GabrielovSouza} and 5.9 in \cite{circular-snakes}). Any word satisfying those conditions is called a \textbf{snake name} (resp., \textbf{circular snake name}) and it was proved that, for any snake name $W$ (resp., circular snake name $W_N$) with length $m>3$ (resp., lenght $m\ge 5$), there is a snake (resp., circular snake) $X$ such that $W = W(X)$ (resp., $W=W_N(X)$) (see Theorems 6.23 in \cite{GabrielovSouza} and 7.10 in \cite{circular-snakes}).

    Theorems 6.23 in \cite{GabrielovSouza} and 7.10 in \cite{circular-snakes} are realization theorems. Along this text we will be giving examples of snakes and circular snakes presenting only their links, however, since the word associated with a snake (resp., circular snake) can be easily obtained from its link, the existence of a snake (resp., circular snake) with the given link is guaranteed by those theorems. 

\end{remark}

\begin{Exam}\label{Exam:snake-name-construction}
	In this example we illustrate how to associate a word $W(X)$ with a snake $X = T(\gamma_{1},\gamma_{2})$. Let $X$ be a snake with link as in Figure \ref{Fig. snake word}. Recall that a \textbf{node} of $X$ is the union of nodal zones with tangency orders higher than $\mu(X)$ (see Definition 4.31 of \cite{GabrielovSouza}). First, we choose an orientation for $X$, say from $\gamma_{1}$ to $\gamma_{2}$. Second, we assign letters to the nodes by moving through the link of $X$, respecting this orientation, in a way that the first node encountered is assigned to the first letter of the alphabet and so on, skipping the nodes already assigned. Finally, we obtain the word associated with $X$ by traversing the link again, accordingly to the orientation, adding a letter every time we pass through a node. In this case, in the alphabet $\{x_1, x_2,\ldots \}$, we have $W(X) = [x_1x_2x_1x_3x_2x_3]$.

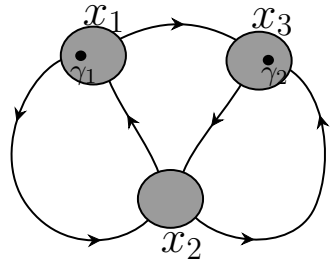
\begin{figure}[h!]
	\centering

\tikzset{every picture/.style={line width=0.75pt}} 

\begin{tikzpicture}[x=0.75pt,y=0.75pt,yscale=-0.8,xscale=0.8]

\draw    (304.45,47.02) .. controls (276.53,44.27) and (244.42,105.07) .. (270.32,141.12) .. controls (296.21,177.17) and (345.28,164.6) .. (355.14,138.83) .. controls (365,113.06) and (297.73,52.53) .. (326.18,37.84) .. controls (354.62,23.15) and (379.45,23.61) .. (406.34,39.22) .. controls (433.24,54.82) and (352.04,112.66) .. (369.1,140.2) .. controls (386.17,167.75) and (438.41,172.34) .. (451.86,142.96) .. controls (465.3,113.58) and (459.62,54.82) .. (419.79,49.77) ;
\draw [shift={(264.14,87.91)}, rotate = 288.06] [fill={rgb, 255:red, 0; green, 0; blue, 0 }  ][line width=0.08]  [draw opacity=0] (7.14,-3.43) -- (0,0) -- (7.14,3.43) -- (4.74,0) -- cycle    ;
\draw [shift={(316.72,163.34)}, rotate = 177.46] [fill={rgb, 255:red, 0; green, 0; blue, 0 }  ][line width=0.08]  [draw opacity=0] (7.14,-3.43) -- (0,0) -- (7.14,3.43) -- (4.74,0) -- cycle    ;
\draw [shift={(334.05,84.79)}, rotate = 59.04] [fill={rgb, 255:red, 0; green, 0; blue, 0 }  ][line width=0.08]  [draw opacity=0] (7.14,-3.43) -- (0,0) -- (7.14,3.43) -- (4.74,0) -- cycle    ;
\draw [shift={(369.2,27.25)}, rotate = 181.83] [fill={rgb, 255:red, 0; green, 0; blue, 0 }  ][line width=0.08]  [draw opacity=0] (7.14,-3.43) -- (0,0) -- (7.14,3.43) -- (4.74,0) -- cycle    ;
\draw [shift={(386.96,91.23)}, rotate = 306.52] [fill={rgb, 255:red, 0; green, 0; blue, 0 }  ][line width=0.08]  [draw opacity=0] (7.14,-3.43) -- (0,0) -- (7.14,3.43) -- (4.74,0) -- cycle    ;
\draw [shift={(413.36,162.99)}, rotate = 183.11] [fill={rgb, 255:red, 0; green, 0; blue, 0 }  ][line width=0.08]  [draw opacity=0] (7.14,-3.43) -- (0,0) -- (7.14,3.43) -- (4.74,0) -- cycle    ;
\draw [shift={(455.2,84.98)}, rotate = 75.41] [fill={rgb, 255:red, 0; green, 0; blue, 0 }  ][line width=0.08]  [draw opacity=0] (7.14,-3.43) -- (0,0) -- (7.14,3.43) -- (4.74,0) -- cycle    ;
\draw  [draw opacity=0][fill={rgb, 255:red, 155; green, 155; blue, 155 }  ,fill opacity=0.47 ] (291.83,46.91) .. controls (291.83,36.83) and (301.03,28.66) .. (312.37,28.66) .. controls (323.71,28.66) and (332.9,36.83) .. (332.9,46.91) .. controls (332.9,56.99) and (323.71,65.16) .. (312.37,65.16) .. controls (301.03,65.16) and (291.83,56.99) .. (291.83,46.91) -- cycle ;
\draw  [fill={rgb, 255:red, 0; green, 0; blue, 0 }  ,fill opacity=1 ] (301.56,47.02) .. controls (301.56,45.54) and (302.85,44.34) .. (304.45,44.34) .. controls (306.05,44.34) and (307.35,45.54) .. (307.35,47.02) .. controls (307.35,48.5) and (306.05,49.7) .. (304.45,49.7) .. controls (302.85,49.7) and (301.56,48.5) .. (301.56,47.02) -- cycle ;
\draw  [draw opacity=0][fill={rgb, 255:red, 155; green, 155; blue, 155 }  ,fill opacity=0.47 ] (340.45,137.34) .. controls (340.45,127.26) and (349.64,119.09) .. (360.98,119.09) .. controls (372.32,119.09) and (381.52,127.26) .. (381.52,137.34) .. controls (381.52,147.42) and (372.32,155.59) .. (360.98,155.59) .. controls (349.64,155.59) and (340.45,147.42) .. (340.45,137.34) -- cycle ;
\draw  [draw opacity=0][fill={rgb, 255:red, 155; green, 155; blue, 155 }  ,fill opacity=0.47 ] (396.31,50.12) .. controls (396.31,40.04) and (405.5,31.87) .. (416.84,31.87) .. controls (428.18,31.87) and (437.38,40.04) .. (437.38,50.12) .. controls (437.38,60.2) and (428.18,68.38) .. (416.84,68.38) .. controls (405.5,68.38) and (396.31,60.2) .. (396.31,50.12) -- cycle ;
\draw  [fill={rgb, 255:red, 0; green, 0; blue, 0 }  ,fill opacity=1 ] (419.79,49.77) .. controls (419.79,48.29) and (421.09,47.09) .. (422.69,47.09) .. controls (424.29,47.09) and (425.59,48.29) .. (425.59,49.77) .. controls (425.59,51.26) and (424.29,52.46) .. (422.69,52.46) .. controls (421.09,52.46) and (419.79,51.26) .. (419.79,49.77) -- cycle ;

\draw (295.81,51.66) node [anchor=north west][inner sep=0.75pt]    {$\gamma _{1}$};
\draw (416.32,52.49) node [anchor=north west][inner sep=0.75pt]    {$\gamma _{2}$};
\draw (303.94,13.56) node [anchor=north west][inner sep=0.75pt]  [font=\LARGE]  {$x_{1}$};
\draw (410.03,14.97) node [anchor=north west][inner sep=0.75pt]  [font=\LARGE]  {$x_{3}$};
\draw (353.07,156.78) node [anchor=north west][inner sep=0.75pt]  [font=\LARGE]  {$x_{2}$};

\end{tikzpicture}
	\caption{Example of a snake with three nodes (each containing exactly two nodal zones) oriented from $\gamma_{1}$ to $\gamma_{2}$. The letters $x_1$, $x_2$ and $x_3$ were assigned to its nodes regarding this orientation. Points inside the shaded disks represent arcs with tangency order higher than the respective surface exponent.}\label{Fig. snake word}
\end{figure}
\end{Exam}

\begin{Exam}
	In this example, we illustrate how to associate a word $W_N(X)$ with a circular snake $X$ with nodal zones. Let $X$ be a circular snake with link as in Figure \ref{Fig. snake word circular}. We choose an orientation for $X$, fix a nodal zone $N$ and assign letters to the nodes by moving through the link of $X$, respecting this orientation, starting at $N$. In this case, using the alphabet $\{x_1, x_2,\ldots \}$, for the orientation $\varepsilon$ in Figure \ref{Fig. snake word circular}a, we obtain $$W_N(X) = [x_1x_2x_3x_4x_5x_6x_4x_2x_3x_5x_1x_6x_1].$$ 
    For the same orientation $\varepsilon$ but now starting from the node $N'$ in Figure \ref{Fig. snake word circular}a, we have $$W_{N'}(X) = [x_4x_2x_3x_5x_1x_6x_1x_2x_3x_4x_5x_6x_4].$$ 
    Finally, for the orientation $-\varepsilon$ and the node $N$ in Figure \ref{Fig. snake word circular}b, $$W_{N}(X) = [x_1x_2x_1x_3x_4x_5x_6x_2x_3x_6x_4x_5x_1].$$ 

\begin{figure}[h!]
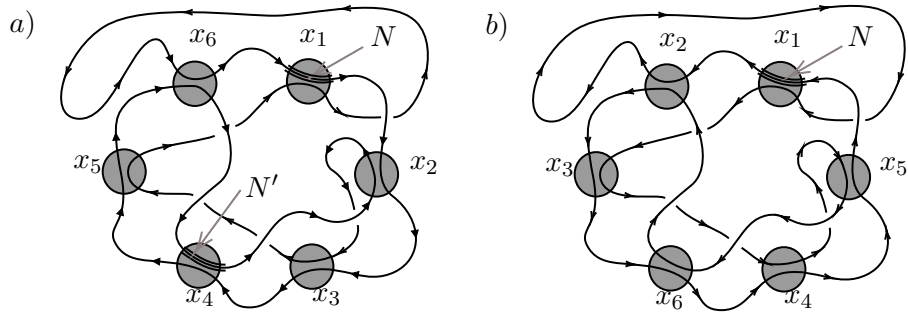

	\centering

\tikzset{every picture/.style={line width=0.75pt}} 


	\caption{Example of a circular snake and its possible orientations. The letters $x_1, \dots, x_6$ are assigned to its six nodes (each one with two nodal zones), starting from the nodal $N$ and accordingly to the given orientation. Points inside the shaded disks represent arcs with tangency order higher than the respective surface exponent.}\label{Fig. snake word circular}
\end{figure}
\end{Exam}


The (circular) words associated with (circular) snakes ignores many geometric properties of them such as the contact orders of arcs in a same node. Still, it is an interesting Lipschitz invariant combinatorial object, since it is possible to create a weaker notion of outer bi-Lipschitz homeomorphism for which those words are preserved.

\begin{Def}\label{Def:weak equivalence}
Let $h:X\to X'$ be an inner bi-Lipschitz map between two $\beta$-surfaces $X$ and $X'$. We say that $h$ is \textbf{weakly outer bi-Lipschitz} when, for any two arcs $\gamma$ and $\gamma'$ of $V(X)$, we have
$$
\tord(h(\gamma),h(\gamma'))>\beta \iff \tord(\gamma,\gamma')>\beta.
$$
If such a homeomorphism exists, we say that $X$ and $X'$ are \textbf{weakly outer Lipschitz equivalent}.
\end{Def}

\begin{Def}\label{Def: clusters}
Let $\mathcal{N}$ and $\mathcal{N}'$ be nodes of a $\beta$-snake (or a circular $\beta$-snake) $X$, and let $\mathcal{S}(\mathcal{N},\mathcal{N}')$ be the (possibly empty) set of all segments of $X$ having adjacent nodal zones in the nodes $\mathcal{N}$ and $\mathcal{N}'$. Two segments $S$ and $S'$ in $\mathcal{S}(\mathcal{N},\mathcal{N}')$ belong to the same \textbf{cluster}
if $\tord(S,S')>\beta$. This defines a \textbf{cluster partition} of $\mathcal{S}(\mathcal{N},\mathcal{N}')$.
The size of each cluster $C$ of this partition is equal to the multiplicity of each segment $S\in C$ (see Definition \ref{Def of multiplicity}).
\end{Def}

We are now ready to describe when two $\beta$-snakes (or circular $\beta$-snakes with nodal zones) are weakly outer Lipschitz equivalent. For $\beta$-snakes, this is Theorem 6.28 of \cite{GabrielovSouza}; for circular $\beta$-snakes with nodal zones, this is Theorem 8.3 of \cite{circular-snakes}. For the special case of circular $\beta$-snakes without nodal zones, Theorem \ref{Teo:Multiplicity without Nodes} is Theorem 8.4 of \cite{circular-snakes}.

\begin{Teo}[Theorem 8.3 in \cite{circular-snakes}]\label{Teo:weak equivalence}

Two $\beta$-snakes $X$ and $X'$ with nodal zones (or circular $\beta$-snakes with nodal zones) are weakly outer Lipschitz equivalent if, and only if, they can be oriented so that
\begin{enumerate}
\item[(i)] They have the same (circular) snake names, the nodes $\mathcal{N}_1,\ldots,\mathcal{N}_n$ of $X$ are in one-to-one correspondence with the nodes
$\mathcal{N}'_1,\ldots,\mathcal{N}'_n$ of $X'$ (with $\mathcal{N}_i$ corresponding to $\mathcal{N}'_i$, for each $i$), and the nodal zones $N_1,\ldots,N_m$ of $X$ are in one-to-one correspondence with the nodal zones $N'_1,\ldots,N'_m$ of $X'$ (with $N_i$ corresponding to $N'_i$, for each $i$);
\item[(ii)] For any two nodes $\mathcal{N}_j$ and $\mathcal{N}_k$ of $X$, and the corresponding nodes $\mathcal{N}'_j$ and $\mathcal{N}'_k$ of $X'$,
each cluster of the cluster partition of the set $\mathcal{S}(\mathcal{N}'_j,\mathcal{N}'_k)$ (see Definition \ref{Def: clusters}) consists
of the segments of $X'$ corresponding to the segments of $X$ contained in a cluster of the cluster partition of the set $\mathcal{S}(\mathcal{N}_j,\mathcal{N}_k)$. 
\end{enumerate}
\end{Teo}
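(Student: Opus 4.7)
The plan is to prove both directions of this biconditional separately. Necessity follows from the fact that the combinatorial data of a (circular) snake are assembled from weakly-outer-bi-Lipschitz invariants, while sufficiency requires an explicit piecewise construction of a homeomorphism.

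\textbf{Necessity.} Suppose $h : X \to X'$ is weakly outer bi-Lipschitz. By definition $h$ preserves the relation ``$\tord(\gamma, \gamma') > \beta$'', and, being inner bi-Lipschitz, it preserves the cyclic (or linear) order of arcs in the Valette link up to orientation reversal. I would first argue that multiplicity is a weakly outer bi-Lipschitz invariant: by Definition \ref{Def of multiplicity}, $m_X(\gamma)$ counts the connected components of $\mathcal{H}X_{a,\beta}(\gamma) \setminus \{0\}$, and these components are determined by equivalence classes of nearby arcs under ``tangency order with $\gamma$ exceeds $\beta$'', a relation preserved by $h$. Hence constant-multiplicity zones correspond, so segments of $X$ map to segments of $X'$ and nodal zones to nodal zones. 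Nodes, being unions of nodal zones with pairwise tangency order exceeding $\beta$, map to nodes; choosing compatible orientations (and, in the circular case, compatible base nodal zones) yields matching (circular) snake names, giving (i). For (ii), two segments of $\mathcal{S}(\mathcal{N}_j, \mathcal{N}_k)$ lie in the same cluster iff their mutual tangency order exceeds $\beta$, which is preserved by $h$, so the cluster partitions match under the correspondence.

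\textbf{Sufficiency.} Conversely, assume (i) and (ii). After fixing compatible orientations, the matching of snake names provides a bijection between segments (resp., nodal zones) of $X$ and of $X'$ respecting adjacency in the Valette link. I would build $h$ piecewise: on each matched pair of pieces, construct an inner bi-Lipschitz homeomorphism (which exists by the inner classification of H\"older triangles, together with the fact that in snakes with nodal zones every segment and every nodal zone is LNE, as noted in Remark \ref{Rem:words}), arranging that these local maps agree on the shared boundary arcs so that they glue to a single inner bi-Lipschitz $h : X \to X'$.

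\textbf{Main obstacle.} The essential difficulty is arranging that the glued $h$ satisfies the global weakly outer condition. For two arcs lying in the same segment or nodal zone this is automatic, since both pieces are LNE and $h$ is inner bi-Lipschitz there, so inner and outer tangency coincide. The non-trivial case is arcs $\gamma, \gamma'$ belonging to distinct segments or nodal zones: one must ensure $\tord(\gamma, \gamma') > \beta \iff \tord(h(\gamma), h(\gamma')) > \beta$. This is precisely what condition (ii) is designed to encode, since two segments within a common pair of nodes lie at tangency order $>\beta$ exactly when they belong to the same cluster. The cleanest way to handle the geometric bookkeeping would be to work with canonical representatives furnished by the realization theorems cited in Remark \ref{Rem:words}: place each piece of $X$ and of $X'$ into a standard position in which segments of a common cluster appear as parallel translates inside a horn of width $O(t^{>\beta})$, so that the weakly outer condition reduces to a direct verification cluster-by-cluster and node-by-node. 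This reduces the analytic problem of preserving all tangency thresholds $>\beta$ to matching a finite amount of combinatorial data, which is exactly what (i) and (ii) record.
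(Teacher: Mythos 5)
First, a point of comparison: the paper does not prove this statement at all. It is quoted, with citation, as Theorem 6.28 of \cite{GabrielovSouza} (for snakes) and Theorem 8.3 of \cite{circular-snakes} (for circular snakes with nodal zones), so there is no in-paper argument to measure your proposal against. On its own merits, your necessity direction is essentially the standard argument: a weakly outer bi-Lipschitz map preserves the relation $\tord(\gamma,\gamma')>\beta$, hence (after checking that multiplicity, and therefore segments, nodal zones, nodes and cluster partitions, are determined by this relation) it matches the combinatorial data; this is the route taken in the cited papers, modulo the detail of justifying that the connected components of $\mathcal{H}X_{a,\beta}(\gamma)\setminus\{0\}$ are recoverable from arcwise tangency data.

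The genuine gap is in sufficiency, in two places. First, your plan to ``place each piece of $X$ and of $X'$ into a standard position'' using the realization theorems is circular: those theorems only assert the existence of \emph{some} snake or circular snake with a prescribed name; they do not provide a normal form to which a \emph{given} surface can be moved by a weakly outer bi-Lipschitz map — that is precisely (an instance of) the equivalence you are trying to prove. Second, even granting a piecewise construction, gluing inner bi-Lipschitz maps on the LNE pieces and then invoking (ii) does not yield the global weak outer condition. Condition (ii) only says the cluster partitions correspond; but for arcs $\gamma\in S$ and $\gamma'\in S'$ with $S,S'$ distinct segments of one cluster, $\tord(\gamma,\gamma')>\beta$ holds only for distinguished pairs of arcs (the ``parallel'' arcs inside the common horn), not for arbitrary pairs, so the homeomorphisms on the different segments of a cluster must be built in a coordinated way — compatibly on horn neighborhoods — so that distinguished pairs go to distinguished pairs. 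Arranging this coordination is the technical heart of the proofs in \cite{GabrielovSouza} and \cite{circular-snakes} (carried out via pancake decompositions whose boundary arcs lie in nodal zones, one pancake per segment, with the maps constructed pancake-by-pancake in a compatible fashion), and your sketch does not supply it.
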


\begin{Teo}[Theorem 8.4 in \cite{circular-snakes} ]\label{Teo:Multiplicity without Nodes}
Let $X$ and $X'$ be two circular $\beta$-snakes without nodal zones. Then, $X$ and $X'$ are weakly outer Lipschitz equivalent if, and only if, $X$ and $X'$ have the same multiplicity.
\end{Teo}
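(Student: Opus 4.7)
The plan is to separate the two implications, handling necessity by showing multiplicity is a weakly outer Lipschitz invariant, and sufficiency by constructing the required homeomorphism exploiting the absence of nodal zones.

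\textbf{Necessity.} I would first show that for any weakly outer bi-Lipschitz $h\colon X\to X'$ and any arc $\gamma\subset X$, one has $m_X(\gamma)=m_{X'}(h(\gamma))$. For $a>0$ small enough, two arcs $\gamma_1,\gamma_2$ of $X$ lie in the same connected component of $\mathcal{H}X_{a,\beta}(\gamma)\setminus\{0\}$ precisely when $\tord(\gamma_1,\gamma_2)>\beta$ along the path joining them; in particular, the number of components equals the number of maximal ``packets'' of arcs with $\tord(\cdot,\gamma)>\beta$ in $V(X)$. Since by Definition \ref{Def:weak equivalence} the homeomorphism $h$ preserves the relation $\tord(\cdot,\cdot)>\beta$, it induces a bijection between these packets, giving $m_X(\gamma)=m_{X'}(h(\gamma))$. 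Because neither $X$ nor $X'$ has nodal zones, by Definition \ref{constant zone} the Valette links $V(X)$ and $V(X')$ are each a single constant zone, so the common multiplicity is well-defined and must agree.

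\textbf{Sufficiency.} Assume $X$ and $X'$ are circular $\beta$-snakes without nodal zones of common multiplicity $q$. Since $V(X)$ is a single constant zone of multiplicity $q$, the surface $X$ admits a ``$q$-sheeted'' structure: near any generic arc $\gamma$, the horn neighborhood $\mathcal{H}X_{a,\beta}(\gamma)$ decomposes into $q$ connected components, each being (the trace of) a non-singular $\beta$-H\"older triangle containing $\gamma$. The same holds for $X'$. My plan is to use the inner bi-Lipschitz equivalence of $X$ and $X'$ (both are $\beta$-horns) to obtain a starting homeomorphism $\phi\colon X\to X'$, and then adjust $\phi$ sheet-by-sheet so that the $q$ local sheets at each $\gamma$ are mapped to the $q$ local sheets at $\phi(\gamma)$. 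Transporting this local matching around the circular link produces a global weakly outer bi-Lipschitz map, provided the local identifications glue consistently. An equivalent, and perhaps cleaner, route is to fix a canonical model $C_q$ of a circular $\beta$-snake without nodal zones of multiplicity $q$ (for instance a $q$-fold cyclic cover of the standard $\beta$-horn, suitably embedded) and to prove that any such $X$ is weakly outer Lipschitz equivalent to $C_q$; the result then follows by transitivity.

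\textbf{Main obstacle.} The delicate part is the sufficiency direction, specifically the global consistency of the sheet-matching as one traverses the circular link. Locally the $q$ sheets are indistinguishable, but globally the identification can be twisted by a cyclic permutation (the ``monodromy'' of the sheeting around the loop). I expect that the hypothesis of a single constant zone of multiplicity $q$ on a circular snake forces this monodromy to be a generator of $\Z/q\Z$, with the two possible generators corresponding to the two orientation choices of the circular link; hence after possibly reversing orientation, the monodromies of $X$ and $X'$ match and the local sheet identifications extend to a global weakly outer bi-Lipschitz homeomorphism. The rest of the argument is then bookkeeping: verifying that the constructed map preserves inner distances (straightforward from the inner equivalence of $\beta$-horns) and preserves the relation $\tord(\cdot,\cdot)>\beta$ (immediate from the sheet-by-sheet matching).
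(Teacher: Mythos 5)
There is a genuine gap, and it sits exactly where you flag it. First, note that the paper does not prove this statement at all: it is imported verbatim as Theorem 8.4 of \cite{circular-snakes}, so there is no in-paper argument to compare against; the expected route in that reference is to cut a circular snake without nodal zones along a horn neighborhood of a generic arc, reducing to a spiral snake (exactly the maneuver used in this paper's proof of Theorem \ref{Teo: minimal-circular-no-nodes}), invoke the classification of spiral snakes by multiplicity from \cite{GabrielovSouza}, and then control the gluing. Your necessity argument is essentially fine in outline (multiplicity is read off from the relation $\tord(\cdot,\cdot)>\beta$, which a weakly outer bi-Lipschitz map preserves, and the absence of nodal zones makes the multiplicity constant), modulo tightening the claim that components of $\mathcal{H}X_{a,\beta}(\gamma)\setminus\{0\}$ correspond to ``packets'' of arcs with tangency order $>\beta$: that relation is not transitive, so the correspondence needs a zone/H\"older-triangle argument rather than a one-line assertion.

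The sufficiency direction, however, is not a proof but a plan whose central step is left as an expectation. Starting from an inner bi-Lipschitz identification of the two $\beta$-horns and ``adjusting sheet-by-sheet'' is precisely the hard content: an inner equivalence gives no control whatsoever on the outer tangency orders, so preserving the relation $\tord(\cdot,\cdot)>\beta$ is not ``immediate from the sheet-by-sheet matching'' --- it is the whole theorem. Moreover, the monodromy claim you lean on is unjustified and, as stated, dubious: you assert the cyclic permutation of the $q$ local sheets is forced to be a generator of $\Z/q\Z$ with ``the two possible generators'' corresponding to the two orientations, but for $q\ge 3$ the group $\Z/q\Z$ has $\varphi(q)$ generators, and you give no argument either that the geometry forces a specific cycle type beyond connectedness of the link (which only forces a $q$-cycle) or that this monodromy is a weak Lipschitz invariant that can be matched between $X$ and $X'$. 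The alternative route via a canonical model $C_q$ is likewise only named, not constructed: one must exhibit $C_q$, compute that it has multiplicity $q$ and no nodal zones, and prove the equivalence $X\simeq C_q$, which again requires building a map controlling outer contacts. Until one of these constructions is actually carried out --- for instance by cutting to a spiral snake, using the known weak classification of snakes/spiral snakes by their combinatorics, and verifying the two boundary pieces can be reglued by a map that respects $\tord>\beta$ across the cut --- the ``if'' direction remains unproved.
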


\section{Pancake decomposition of surfaces}\label{section:pancake decomposition}

In this section, we give the definitions of a pancake decomposition and the corresponding concepts of minimal and reduced decompositions. We present the definition of weakly bi-Lipschitz equivalence of pancake decompositions aiming to obtain later, in Section \ref{Sec: Weakly canonicity}, a minimal decomposition which is canonical with respect to this equivalence. We also give an example showing that not all reduced decompositions are minimal, which explicitly shows the need for a more sophisticated algorithm to find minimal pancake decompositions in a canonical way.

\begin{Def}\label{Def: pancake decomposition}
	 Let $X\subset \mathbb{R}^{n}$ be the germ at the origin of a closed set. A \textbf{pancake decomposition of $X$} is a finite collection of closed LNE subsets $X_{k}$ of $X$ with connected links, called \textbf{pancakes}, such that $X=\bigcup X_{k}$ and $$\dim(X_{j}\cap X_{k}) < \min(\dim(X_{j}),\dim(X_{k}))\quad\text{for all}\; j,k.$$
\end{Def}

\begin{remark}\label{Rem: existence of pancake decomp}
	 The term ``pancake'' was introduced in \cite{LBirbMosto2000NormalEmbedding}, but this notion first appeared (with a different name) in \cite{kurdyka1992subanalytic} and \cite{KurdykaOrro97}, where the existence of such a decomposition was established.
\end{remark}

\begin{remark}\label{Rem:pancake of holder triangle is holder triangle}
	 If $X$ is a H\"older triangle and $\{X_k\}_{i=1}^p$ is its pancake decomposition, then each pancake $X_k$ is also a H\"older triangle. Moreover, if $X$ is a non LNE surface germ and has circular link, then $p>1$ and each pancake is also a H\"older triangle.  
\end{remark}

\begin{Def}\label{Def: reduced pancak decomp}
	 A pancake decomposition $\{X_{k}\}_{i=1}^{p}$ of a set $X$ is \textbf{reduced} if the union of any two adjacent pancakes $X_{j}$ and $X_{k}$ (such that $X_{j}\cap X_{k}\ne \{0\}$) is not LNE. We also say that $\{X_{k}\}_{i=1}^{p}$ is \textbf{minimal} if $p$ is minimal among all pancake decompositions of $X$.
\end{Def}

\begin{remark}\label{Rem: reduced PD always exists}
	 When the union of two adjacent pancakes is LNE, they can be replaced by their union, reducing the number of pancakes. Thus, a reduced pancake decomposition of a set $X$ always exists. Moreover, every minimal pancake decomposition of $X$ is reduced, but the converse is false, as seen in Example \ref{Exam: two non-equiv decomp}. This example also shows that it is not possible, in general, to obtain a minimal pancake decomposition from a reduced one.
\end{remark}

Since every set $X$ can be decomposed into a finite number of pancakes (see \cite{kurdyka1992subanalytic}, \cite{KurdykaOrro97} and \cite{LBirbMosto2000NormalEmbedding}), a minimal pancake decomposition of $X$ always exists. However, one may wonder if there is a constructive way to find such a decomposition, and if such a construction is canonical. For snakes and circular snakes, we give an affirmative answer, but first, we must define weakly bi-Lipschitz equivalence between pancake decompositions.

\begin{Def}\label{Def: equiv of pancake decomp}
    Let $X, \tilde X$ be two surfaces and let $\{X_i\}_{i\in I}$, $\{\tilde X_j\}_{j\in J}$ be pancake decompositions of $X$ and $\tilde X$, respectively. We say that $\{X_i\}_{i\in I}$ and $\{\tilde X_j\}_{j\in J}$ are \textbf{weakly outer bi-Lipschitz equivalent} pancake decompositions for $X$ and $\tilde X$ if there is a bijection $\sigma \colon I \rightarrow J$ and a weakly outer bi-Lipschitz map $h\colon X \rightarrow \tilde X$ such that $h(X_i) = \tilde X_{\sigma(i)}$ for all $i\in I$.
\end{Def}

\begin{Exam}\label{Exam: two non-equiv decomp}
    In Figure \ref{Fig. two non-equiv decompositions} we have the link of an abnormal $\beta$-surface $X$ and the representation of two of its reduced pancake decompositions, denoted by $\{X_j = T(\lambda_{j-1},\lambda_j)\}_{j=1}^2$ and $\{\tilde X_j = T(\tilde \lambda_{j-1},\tilde \lambda_j)\}_{j=1}^3$. Clearly such decompositions are not weakly bi-Lipschitz equivalent (See Definition \ref{Def: equiv of pancake decomp}), since the number of pancakes is different (See Proposition \ref{Prop: canonical-snakes}). This example shows that a reduced decomposition is not necessarily minimal. Moreover, it shows that it is not always possible to obtain a minimal pancake decomposition from a given pancake decomposition by joining LNE adjacent pancakes into a single new pancake, as described in Remark \ref{Rem: reduced PD always exists}.
\end{Exam}

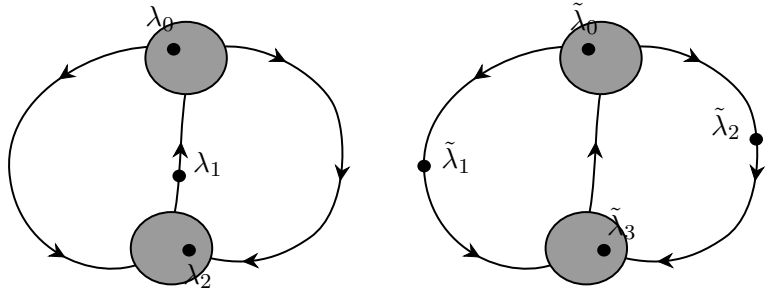
\begin{figure}[h!]
	\centering

\tikzset{every picture/.style={line width=0.75pt}} 

\begin{tikzpicture}[x=0.75pt,y=0.75pt,yscale=-1,xscale=1]

\draw    (234.48,45.36) .. controls (208.48,38.16) and (148.39,62.71) .. (151.59,106.98) .. controls (154.79,151.25) and (203.18,166.21) .. (225.07,149.42) .. controls (246.96,132.63) and (228.56,49.37) .. (253.84,44.67) .. controls (279.12,39.98) and (307.72,58.68) .. (315.07,73.27) .. controls (322.41,87.87) and (322.35,126.82) .. (303.45,140.19) .. controls (284.54,153.56) and (265.96,156.44) .. (239.38,146.79) ;
\draw [shift={(176.85,60.29)}, rotate = 325.79] [fill={rgb, 255:red, 0; green, 0; blue, 0 }  ][line width=0.08]  [draw opacity=0] (7.14,-3.43) -- (0,0) -- (7.14,3.43) -- (4.74,0) -- cycle    ;
\draw [shift={(179.5,150.2)}, rotate = 209.02] [fill={rgb, 255:red, 0; green, 0; blue, 0 }  ][line width=0.08]  [draw opacity=0] (7.14,-3.43) -- (0,0) -- (7.14,3.43) -- (4.74,0) -- cycle    ;
\draw [shift={(238.18,92.51)}, rotate = 93.12] [fill={rgb, 255:red, 0; green, 0; blue, 0 }  ][line width=0.08]  [draw opacity=0] (7.14,-3.43) -- (0,0) -- (7.14,3.43) -- (4.74,0) -- cycle    ;
\draw [shift={(290.84,51.56)}, rotate = 206.47] [fill={rgb, 255:red, 0; green, 0; blue, 0 }  ][line width=0.08]  [draw opacity=0] (7.14,-3.43) -- (0,0) -- (7.14,3.43) -- (4.74,0) -- cycle    ;
\draw [shift={(318.5,111.55)}, rotate = 278.32] [fill={rgb, 255:red, 0; green, 0; blue, 0 }  ][line width=0.08]  [draw opacity=0] (7.14,-3.43) -- (0,0) -- (7.14,3.43) -- (4.74,0) -- cycle    ;
\draw [shift={(269.58,152.46)}, rotate = 356.71] [fill={rgb, 255:red, 0; green, 0; blue, 0 }  ][line width=0.08]  [draw opacity=0] (7.14,-3.43) -- (0,0) -- (7.14,3.43) -- (4.74,0) -- cycle    ;
\draw  [draw opacity=0][fill={rgb, 255:red, 155; green, 155; blue, 155 }  ,fill opacity=0.47 ] (220.23,49.75) .. controls (220.23,39.67) and (229.43,31.5) .. (240.77,31.5) .. controls (252.11,31.5) and (261.3,39.67) .. (261.3,49.75) .. controls (261.3,59.83) and (252.11,68.01) .. (240.77,68.01) .. controls (229.43,68.01) and (220.23,59.83) .. (220.23,49.75) -- cycle ;
\draw  [fill={rgb, 255:red, 0; green, 0; blue, 0 }  ,fill opacity=1 ] (231.58,45.36) .. controls (231.58,43.88) and (232.88,42.68) .. (234.48,42.68) .. controls (236.08,42.68) and (237.38,43.88) .. (237.38,45.36) .. controls (237.38,46.84) and (236.08,48.04) .. (234.48,48.04) .. controls (232.88,48.04) and (231.58,46.84) .. (231.58,45.36) -- cycle ;
\draw  [draw opacity=0][fill={rgb, 255:red, 155; green, 155; blue, 155 }  ,fill opacity=0.47 ] (212.85,145.78) .. controls (212.85,135.7) and (222.04,127.53) .. (233.38,127.53) .. controls (244.72,127.53) and (253.92,135.7) .. (253.92,145.78) .. controls (253.92,155.86) and (244.72,164.04) .. (233.38,164.04) .. controls (222.04,164.04) and (212.85,155.86) .. (212.85,145.78) -- cycle ;
\draw  [fill={rgb, 255:red, 0; green, 0; blue, 0 }  ,fill opacity=1 ] (239.38,146.79) .. controls (239.38,145.31) and (240.68,144.11) .. (242.28,144.11) .. controls (243.88,144.11) and (245.18,145.31) .. (245.18,146.79) .. controls (245.18,148.27) and (243.88,149.48) .. (242.28,149.48) .. controls (240.68,149.48) and (239.38,148.27) .. (239.38,146.79) -- cycle ;
\draw  [fill={rgb, 255:red, 0; green, 0; blue, 0 }  ,fill opacity=1 ] (357.9,104.3) .. controls (357.9,102.82) and (359.2,101.61) .. (360.8,101.61) .. controls (362.4,101.61) and (363.7,102.82) .. (363.7,104.3) .. controls (363.7,105.78) and (362.4,106.98) .. (360.8,106.98) .. controls (359.2,106.98) and (357.9,105.78) .. (357.9,104.3) -- cycle ;
\draw    (443.69,45.36) .. controls (417.69,38.16) and (357.6,62.71) .. (360.8,106.98) .. controls (364,151.25) and (412.39,166.21) .. (434.28,149.42) .. controls (456.17,132.63) and (437.77,49.37) .. (463.05,44.67) .. controls (488.33,39.98) and (516.93,58.68) .. (524.28,73.27) .. controls (531.62,87.87) and (531.56,126.82) .. (512.66,140.19) .. controls (493.75,153.56) and (475.17,156.44) .. (448.59,146.79) ;
\draw [shift={(386.06,60.29)}, rotate = 325.79] [fill={rgb, 255:red, 0; green, 0; blue, 0 }  ][line width=0.08]  [draw opacity=0] (7.14,-3.43) -- (0,0) -- (7.14,3.43) -- (4.74,0) -- cycle    ;
\draw [shift={(388.71,150.2)}, rotate = 209.02] [fill={rgb, 255:red, 0; green, 0; blue, 0 }  ][line width=0.08]  [draw opacity=0] (7.14,-3.43) -- (0,0) -- (7.14,3.43) -- (4.74,0) -- cycle    ;
\draw [shift={(447.39,92.51)}, rotate = 93.12] [fill={rgb, 255:red, 0; green, 0; blue, 0 }  ][line width=0.08]  [draw opacity=0] (7.14,-3.43) -- (0,0) -- (7.14,3.43) -- (4.74,0) -- cycle    ;
\draw [shift={(500.05,51.56)}, rotate = 206.47] [fill={rgb, 255:red, 0; green, 0; blue, 0 }  ][line width=0.08]  [draw opacity=0] (7.14,-3.43) -- (0,0) -- (7.14,3.43) -- (4.74,0) -- cycle    ;
\draw [shift={(527.71,111.55)}, rotate = 278.32] [fill={rgb, 255:red, 0; green, 0; blue, 0 }  ][line width=0.08]  [draw opacity=0] (7.14,-3.43) -- (0,0) -- (7.14,3.43) -- (4.74,0) -- cycle    ;
\draw [shift={(478.79,152.46)}, rotate = 356.71] [fill={rgb, 255:red, 0; green, 0; blue, 0 }  ][line width=0.08]  [draw opacity=0] (7.14,-3.43) -- (0,0) -- (7.14,3.43) -- (4.74,0) -- cycle    ;
\draw  [draw opacity=0][fill={rgb, 255:red, 155; green, 155; blue, 155 }  ,fill opacity=0.47 ] (429.44,49.75) .. controls (429.44,39.67) and (438.64,31.5) .. (449.98,31.5) .. controls (461.32,31.5) and (470.51,39.67) .. (470.51,49.75) .. controls (470.51,59.83) and (461.32,68.01) .. (449.98,68.01) .. controls (438.64,68.01) and (429.44,59.83) .. (429.44,49.75) -- cycle ;
\draw  [fill={rgb, 255:red, 0; green, 0; blue, 0 }  ,fill opacity=1 ] (440.79,45.36) .. controls (440.79,43.88) and (442.09,42.68) .. (443.69,42.68) .. controls (445.29,42.68) and (446.59,43.88) .. (446.59,45.36) .. controls (446.59,46.84) and (445.29,48.04) .. (443.69,48.04) .. controls (442.09,48.04) and (440.79,46.84) .. (440.79,45.36) -- cycle ;
\draw  [draw opacity=0][fill={rgb, 255:red, 155; green, 155; blue, 155 }  ,fill opacity=0.47 ] (422.06,145.78) .. controls (422.06,135.7) and (431.25,127.53) .. (442.59,127.53) .. controls (453.93,127.53) and (463.13,135.7) .. (463.13,145.78) .. controls (463.13,155.86) and (453.93,164.04) .. (442.59,164.04) .. controls (431.25,164.04) and (422.06,155.86) .. (422.06,145.78) -- cycle ;
\draw  [fill={rgb, 255:red, 0; green, 0; blue, 0 }  ,fill opacity=1 ] (448.59,146.79) .. controls (448.59,145.31) and (449.89,144.11) .. (451.49,144.11) .. controls (453.09,144.11) and (454.39,145.31) .. (454.39,146.79) .. controls (454.39,148.27) and (453.09,149.48) .. (451.49,149.48) .. controls (449.89,149.48) and (448.59,148.27) .. (448.59,146.79) -- cycle ;
\draw  [fill={rgb, 255:red, 0; green, 0; blue, 0 }  ,fill opacity=1 ] (525.4,90.8) .. controls (525.4,89.32) and (526.7,88.11) .. (528.3,88.11) .. controls (529.9,88.11) and (531.2,89.32) .. (531.2,90.8) .. controls (531.2,92.28) and (529.9,93.48) .. (528.3,93.48) .. controls (526.7,93.48) and (525.4,92.28) .. (525.4,90.8) -- cycle ;
\draw  [fill={rgb, 255:red, 0; green, 0; blue, 0 }  ,fill opacity=1 ] (234.38,109.29) .. controls (234.38,107.81) and (235.68,106.61) .. (237.28,106.61) .. controls (238.88,106.61) and (240.18,107.81) .. (240.18,109.29) .. controls (240.18,110.77) and (238.88,111.98) .. (237.28,111.98) .. controls (235.68,111.98) and (234.38,110.77) .. (234.38,109.29) -- cycle ;

\draw (218.81,22) node [anchor=north west][inner sep=0.75pt]    {$\lambda _{0}$};
\draw (242.81,95.5) node [anchor=north west][inner sep=0.75pt]    {$\lambda _{1}$};
\draw (237.81,152) node [anchor=north west][inner sep=0.75pt]    {$\lambda _{2}$};
\draw (432.6,20.5) node [anchor=north west][inner sep=0.75pt]    {$\tilde{\lambda }_{0}$};
\draw (368.1,90.4) node [anchor=north west][inner sep=0.75pt]    {$\tilde{\lambda }_{1}$};
\draw (504.1,73.4) node [anchor=north west][inner sep=0.75pt]    {$\tilde{\lambda }_{2}$};
\draw (451.6,123.9) node [anchor=north west][inner sep=0.75pt]    {$\tilde{\lambda }_{3}$};

\end{tikzpicture}
	\caption{Links of two weakly outer bi-Lipschitz non-equivalent reduced pancake decompositions, $\{X_j = T(\lambda_{j-1},\lambda_j)\}_{j=1}^2$ and $\{\tilde X_j = T(\tilde \lambda_{j-1},\tilde \lambda_j)\}_{j=1}^3$, of an abnormal $\beta$-surface $X$. Points inside the shaded disks represent arcs with tangency order higher $\beta$.}\label{Fig. two non-equiv decompositions}
\end{figure}

\section{Minimal pancake decomposition of snakes}

Now we present a constructive way to obtain a minimal pancake decomposition for a given snake $X$. First, we move along the link of $X$ accordingly to some orientation, enumerating the nodal zones in the order they appear and taking into account nodal zones on a same node. Informally, the main idea of such construction is to look at the first time a nodal zone is on the same node of a previous nodal zone, and then ``break'' the link of $X$ at this nodal zone. We apply this same procedure now starting from this ``break point'' and repeat this process until we reach the endpoint of the link. The positions of such ``break points'' determine the minimal sequence of $X$, and such a sequence will generate a minimal pancake decomposition. Since the minimal sequence is obtained by applying an analog of the greedy algorithm for graphs, we call any corresponding pancake decomposition a greedy decomposition of $X$.

\begin{Def}\label{Def: minimal sequence}
    Let $X=T(\gamma_1,\gamma_2)$ be a $\beta$-snake oriented from $\gamma_1$ to $\gamma_2$. Let $\{N_i\}_{i=0}^{m}$ and $\{S_i\}_{i=1}^m$ be the decomposition of the Valette link of $X$ into nodal zones and segments, respectively, satisfying that the nodal zones were enumerated according to the orientation of $X$ and $N_{i-1}$, $N_i$ are the nodal zones adjacent to $S_i$, for each $i=1,\ldots,m$ (see Proposition 4.30 in \cite{GabrielovSouza}). Consider, for each $i=0,\ldots,m$, arcs $\theta_i \in N_i$. We define the sequence $\{j_0, j_1, \cdots , j_p\}$ recursively as follows: $j_0=0$ and, for each $i>0$, if there is no integer $k>j_{i-1}$ such that $T(\theta_{j_{i-1}},\theta_{k})$ is not LNE, then set $i-1=p$; otherwise, $j_i$ is the minimum integer greater than $j_{i-1}$ such that $T(\theta_{j_{i-1}},\theta_{j_i})$ is not LNE.
    
    The sequence $\{j_0, j_1, \cdots , j_p\}$ obtained as above is called the \textbf{minimal sequence of} $X$.
\end{Def}

\begin{remark}
    The minimal sequence $\{j_0, j_1, \cdots , j_p\}$ always satisfies $p\ge 1$ and $0=j_0<j_1<\cdots<j_p$. Moreover, the minimal sequence does not depend on the choice of the arcs $\theta_i$, but only on the nodal zones $N_i$ and thus only on the orientation of $X$.
\end{remark}

\begin{remark}
    As a direct consequence of Definition \ref{Def: minimal sequence}, one can alternatively obtain the minimal sequence $\{j_0, j_1, \cdots , j_p\}$ of a snake $X$ from its snake name $W=[x_0x_1\cdots x_m]$ (if it has at least two letters) as follows: $j_0=0$ and, for each $i>0$, if there is no integer $k>j_{i-1}$ such that the subword $[x_{j_{i-1}}\cdots x_{k}]$ of $W$ is not primitive, then set $i-1=p$; otherwise, $j_i$ is the minimum integer $k$ greater than $j_{i-1}$ such that the subword $[x_{j_{i-1}}\cdots x_{k}]$ of $W$ is not primitive. 
\end{remark}

\begin{Exam}\label{exemplo1}
    Consider the three snakes $X=T(\gamma_1,\gamma_2)$ oriented from $\gamma_1$ to $\gamma_2$, whose links are represented in Figure \ref{4} and their corresponding arcs $\theta_i$. In Figure \ref{4}a, the minimal sequence of $X$ is $\{0,1\}$; in Figure \ref{4}b, the minimal sequence of $X$ is $\{0,2\}$ and in Figure \ref{4}c, the minimal sequence of $X$ is $\{0,2,5\}$. Notice that $p=m$ in Figure \ref{4}a and \ref{4}c, and $p<m$ in Figure \ref{4}b.
    
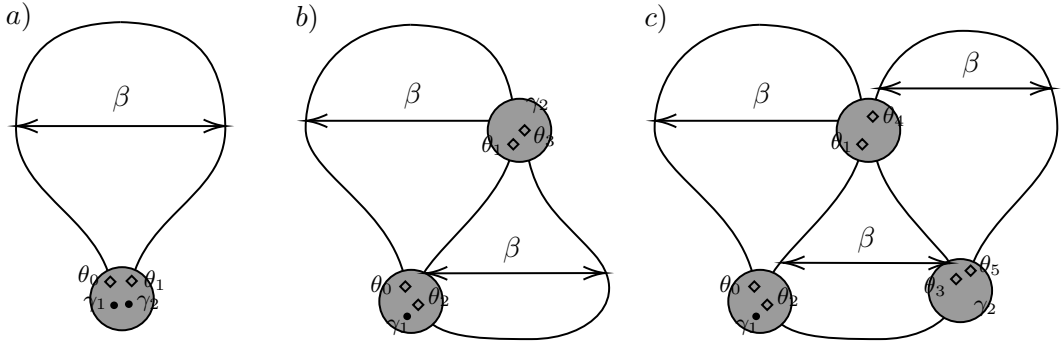
\begin{figure}[h!]
 \centering
{\tikzset{every picture/.style={line width=0.75pt}} 

\begin{tikzpicture}[x=0.75pt,y=0.75pt,yscale=-1,xscale=1]

\draw    (73.37,37.09) .. controls (39.17,38.06) and (21.58,51.67) .. (20.61,89.56) .. controls (19.63,127.45) and (70.06,135.76) .. (70.06,181.43) ;
\draw    (73.37,37.09) .. controls (107.56,38.06) and (125.15,51.67) .. (126.13,89.56) .. controls (127.1,127.45) and (77.49,135.19) .. (77.49,180.86) ;
\draw  [draw opacity=0][fill={rgb, 255:red, 155; green, 155; blue, 155 }  ,fill opacity=0.47 ] (58.04,176.71) .. controls (58.04,167.91) and (65.21,160.78) .. (74.06,160.78) .. controls (82.9,160.78) and (90.07,167.91) .. (90.07,176.71) .. controls (90.07,185.51) and (82.9,192.64) .. (74.06,192.64) .. controls (65.21,192.64) and (58.04,185.51) .. (58.04,176.71) -- cycle ;
\draw    (22.61,89.56) -- (124.13,89.56) ;
\draw [shift={(126.13,89.56)}, rotate = 180] [color={rgb, 255:red, 0; green, 0; blue, 0 }  ][line width=0.75]    (10.93,-3.29) .. controls (6.95,-1.4) and (3.31,-0.3) .. (0,0) .. controls (3.31,0.3) and (6.95,1.4) .. (10.93,3.29)   ;
\draw [shift={(20.61,89.56)}, rotate = 0] [color={rgb, 255:red, 0; green, 0; blue, 0 }  ][line width=0.75]    (10.93,-3.29) .. controls (6.95,-1.4) and (3.31,-0.3) .. (0,0) .. controls (3.31,0.3) and (6.95,1.4) .. (10.93,3.29)   ;
\draw  [fill={rgb, 255:red, 0; green, 0; blue, 0 }  ,fill opacity=1 ] (68.61,180.02) .. controls (68.61,179.25) and (69.26,178.62) .. (70.06,178.62) .. controls (70.86,178.62) and (71.5,179.25) .. (71.5,180.02) .. controls (71.5,180.8) and (70.86,181.43) .. (70.06,181.43) .. controls (69.26,181.43) and (68.61,180.8) .. (68.61,180.02) -- cycle ;
\draw  [fill={rgb, 255:red, 0; green, 0; blue, 0 }  ,fill opacity=1 ] (76.04,179.45) .. controls (76.04,178.68) and (76.69,178.05) .. (77.49,178.05) .. controls (78.28,178.05) and (78.93,178.68) .. (78.93,179.45) .. controls (78.93,180.23) and (78.28,180.86) .. (77.49,180.86) .. controls (76.69,180.86) and (76.04,180.23) .. (76.04,179.45) -- cycle ;
\draw    (395.43,38.43) .. controls (361.24,39.4) and (342.55,64.69) .. (342.67,90.89) .. controls (342.8,117.09) and (398.37,134.11) .. (393.87,185.61) ;
\draw    (395.43,38.43) .. controls (429.63,39.4) and (447.22,53) .. (448.19,90.89) .. controls (449.17,128.78) and (395.35,155.89) .. (398.55,176.29) .. controls (401.75,196.69) and (426.55,197.09) .. (456.55,197.09) .. controls (486.55,197.09) and (507.93,176.37) .. (487.93,153.97) .. controls (467.93,131.57) and (452.67,116.57) .. (452.27,90.97) .. controls (451.87,65.37) and (470.15,41.49) .. (498.15,41.49) .. controls (526.15,41.49) and (544.89,56.02) .. (545.75,89.49) .. controls (546.6,122.97) and (496.96,137.38) .. (499.62,181.38) ;
\draw  [draw opacity=0][fill={rgb, 255:red, 155; green, 155; blue, 155 }  ,fill opacity=0.47 ] (379.82,178.04) .. controls (379.82,169.25) and (386.99,162.12) .. (395.84,162.12) .. controls (404.69,162.12) and (411.86,169.25) .. (411.86,178.04) .. controls (411.86,186.84) and (404.69,193.97) .. (395.84,193.97) .. controls (386.99,193.97) and (379.82,186.84) .. (379.82,178.04) -- cycle ;
\draw    (344.67,86.89) -- (446.19,86.89) ;
\draw [shift={(448.19,86.89)}, rotate = 180] [color={rgb, 255:red, 0; green, 0; blue, 0 }  ][line width=0.75]    (10.93,-3.29) .. controls (6.95,-1.4) and (3.31,-0.3) .. (0,0) .. controls (3.31,0.3) and (6.95,1.4) .. (10.93,3.29)   ;
\draw [shift={(342.67,86.89)}, rotate = 0] [color={rgb, 255:red, 0; green, 0; blue, 0 }  ][line width=0.75]    (10.93,-3.29) .. controls (6.95,-1.4) and (3.31,-0.3) .. (0,0) .. controls (3.31,0.3) and (6.95,1.4) .. (10.93,3.29)   ;
\draw  [fill={rgb, 255:red, 0; green, 0; blue, 0 }  ,fill opacity=1 ] (392.42,185.61) .. controls (392.42,184.83) and (393.07,184.2) .. (393.87,184.2) .. controls (394.67,184.2) and (395.31,184.83) .. (395.31,185.61) .. controls (395.31,186.38) and (394.67,187.01) .. (393.87,187.01) .. controls (393.07,187.01) and (392.42,186.38) .. (392.42,185.61) -- cycle ;
\draw  [fill={rgb, 255:red, 0; green, 0; blue, 0 }  ,fill opacity=1 ] (498.18,181.38) .. controls (498.18,180.6) and (498.82,179.97) .. (499.62,179.97) .. controls (500.42,179.97) and (501.07,180.6) .. (501.07,181.38) .. controls (501.07,182.15) and (500.42,182.78) .. (499.62,182.78) .. controls (498.82,182.78) and (498.18,182.15) .. (498.18,181.38) -- cycle ;
\draw  [draw opacity=0][fill={rgb, 255:red, 155; green, 155; blue, 155 }  ,fill opacity=0.47 ] (434.49,91.71) .. controls (434.49,82.91) and (441.66,75.78) .. (450.51,75.78) .. controls (459.35,75.78) and (466.52,82.91) .. (466.52,91.71) .. controls (466.52,100.51) and (459.35,107.64) .. (450.51,107.64) .. controls (441.66,107.64) and (434.49,100.51) .. (434.49,91.71) -- cycle ;
\draw  [draw opacity=0][fill={rgb, 255:red, 155; green, 155; blue, 155 }  ,fill opacity=0.47 ] (480.82,172.61) .. controls (480.82,163.81) and (487.99,156.68) .. (496.84,156.68) .. controls (505.69,156.68) and (512.86,163.81) .. (512.86,172.61) .. controls (512.86,181.4) and (505.69,188.53) .. (496.84,188.53) .. controls (487.99,188.53) and (480.82,181.4) .. (480.82,172.61) -- cycle ;
\draw   (392.94,167.95) -- (395.53,170.55) -- (392.94,173.16) -- (390.35,170.55) -- cycle ;
\draw   (399.44,177.18) -- (402.03,179.78) -- (399.44,182.39) -- (396.84,179.78) -- cycle ;
\draw   (447.39,96.15) -- (449.98,98.75) -- (447.39,101.36) -- (444.8,98.75) -- cycle ;
\draw   (452.72,82.15) -- (455.31,84.75) -- (452.72,87.36) -- (450.13,84.75) -- cycle ;
\draw   (494.72,164.17) -- (497.31,166.78) -- (494.72,169.38) -- (492.13,166.78) -- cycle ;
\draw   (502.05,159.95) -- (504.64,162.55) -- (502.05,165.16) -- (499.46,162.55) -- cycle ;
\draw    (219.43,38.43) .. controls (185.24,39.4) and (166.55,64.69) .. (166.67,90.89) .. controls (166.8,117.09) and (222.37,134.11) .. (217.87,185.61) ;
\draw    (219.43,38.43) .. controls (253.63,39.4) and (271.22,53) .. (272.19,90.89) .. controls (273.17,128.78) and (219.35,155.89) .. (222.55,176.29) .. controls (225.75,196.69) and (250.55,197.09) .. (280.55,197.09) .. controls (310.55,197.09) and (331.93,176.37) .. (311.93,153.97) .. controls (291.93,131.57) and (277.47,126.52) .. (275.97,79.18) ;
\draw  [draw opacity=0][fill={rgb, 255:red, 155; green, 155; blue, 155 }  ,fill opacity=0.47 ] (203.82,178.04) .. controls (203.82,169.25) and (210.99,162.12) .. (219.84,162.12) .. controls (228.69,162.12) and (235.86,169.25) .. (235.86,178.04) .. controls (235.86,186.84) and (228.69,193.97) .. (219.84,193.97) .. controls (210.99,193.97) and (203.82,186.84) .. (203.82,178.04) -- cycle ;
\draw    (168.67,86.89) -- (270.19,86.89) ;
\draw [shift={(272.19,86.89)}, rotate = 180] [color={rgb, 255:red, 0; green, 0; blue, 0 }  ][line width=0.75]    (10.93,-3.29) .. controls (6.95,-1.4) and (3.31,-0.3) .. (0,0) .. controls (3.31,0.3) and (6.95,1.4) .. (10.93,3.29)   ;
\draw [shift={(166.67,86.89)}, rotate = 0] [color={rgb, 255:red, 0; green, 0; blue, 0 }  ][line width=0.75]    (10.93,-3.29) .. controls (6.95,-1.4) and (3.31,-0.3) .. (0,0) .. controls (3.31,0.3) and (6.95,1.4) .. (10.93,3.29)   ;
\draw  [fill={rgb, 255:red, 0; green, 0; blue, 0 }  ,fill opacity=1 ] (216.42,185.61) .. controls (216.42,184.83) and (217.07,184.2) .. (217.87,184.2) .. controls (218.67,184.2) and (219.31,184.83) .. (219.31,185.61) .. controls (219.31,186.38) and (218.67,187.01) .. (217.87,187.01) .. controls (217.07,187.01) and (216.42,186.38) .. (216.42,185.61) -- cycle ;
\draw  [fill={rgb, 255:red, 0; green, 0; blue, 0 }  ,fill opacity=1 ] (274.52,80.59) .. controls (274.52,79.81) and (275.17,79.18) .. (275.97,79.18) .. controls (276.77,79.18) and (277.41,79.81) .. (277.41,80.59) .. controls (277.41,81.36) and (276.77,81.99) .. (275.97,81.99) .. controls (275.17,81.99) and (274.52,81.36) .. (274.52,80.59) -- cycle ;
\draw  [draw opacity=0][fill={rgb, 255:red, 155; green, 155; blue, 155 }  ,fill opacity=0.47 ] (258.49,91.71) .. controls (258.49,82.91) and (265.66,75.78) .. (274.51,75.78) .. controls (283.35,75.78) and (290.52,82.91) .. (290.52,91.71) .. controls (290.52,100.51) and (283.35,107.64) .. (274.51,107.64) .. controls (265.66,107.64) and (258.49,100.51) .. (258.49,91.71) -- cycle ;
\draw   (216.94,167.95) -- (219.53,170.55) -- (216.94,173.16) -- (214.35,170.55) -- cycle ;
\draw   (223.44,177.18) -- (226.03,179.78) -- (223.44,182.39) -- (220.84,179.78) -- cycle ;
\draw   (271.39,96.15) -- (273.98,98.75) -- (271.39,101.36) -- (268.8,98.75) -- cycle ;
\draw   (277.1,89.11) -- (279.69,91.71) -- (277.1,94.31) -- (274.51,91.71) -- cycle ;
\draw    (229.17,163.89) -- (315.7,163.75) ;
\draw [shift={(317.7,163.75)}, rotate = 179.91] [color={rgb, 255:red, 0; green, 0; blue, 0 }  ][line width=0.75]    (10.93,-3.29) .. controls (6.95,-1.4) and (3.31,-0.3) .. (0,0) .. controls (3.31,0.3) and (6.95,1.4) .. (10.93,3.29)   ;
\draw [shift={(227.17,163.89)}, rotate = 359.91] [color={rgb, 255:red, 0; green, 0; blue, 0 }  ][line width=0.75]    (10.93,-3.29) .. controls (6.95,-1.4) and (3.31,-0.3) .. (0,0) .. controls (3.31,0.3) and (6.95,1.4) .. (10.93,3.29)   ;
\draw    (409.36,158.56) -- (489.7,158.75) ;
\draw [shift={(491.7,158.75)}, rotate = 180.13] [color={rgb, 255:red, 0; green, 0; blue, 0 }  ][line width=0.75]    (10.93,-3.29) .. controls (6.95,-1.4) and (3.31,-0.3) .. (0,0) .. controls (3.31,0.3) and (6.95,1.4) .. (10.93,3.29)   ;
\draw [shift={(407.36,158.56)}, rotate = 0.13] [color={rgb, 255:red, 0; green, 0; blue, 0 }  ][line width=0.75]    (10.93,-3.29) .. controls (6.95,-1.4) and (3.31,-0.3) .. (0,0) .. controls (3.31,0.3) and (6.95,1.4) .. (10.93,3.29)   ;
\draw    (457.76,70.56) -- (540.96,70.56) ;
\draw [shift={(542.96,70.56)}, rotate = 180] [color={rgb, 255:red, 0; green, 0; blue, 0 }  ][line width=0.75]    (10.93,-3.29) .. controls (6.95,-1.4) and (3.31,-0.3) .. (0,0) .. controls (3.31,0.3) and (6.95,1.4) .. (10.93,3.29)   ;
\draw [shift={(455.76,70.56)}, rotate = 0] [color={rgb, 255:red, 0; green, 0; blue, 0 }  ][line width=0.75]    (10.93,-3.29) .. controls (6.95,-1.4) and (3.31,-0.3) .. (0,0) .. controls (3.31,0.3) and (6.95,1.4) .. (10.93,3.29)   ;
\draw   (68.14,165.55) -- (70.73,168.15) -- (68.14,170.76) -- (65.55,168.15) -- cycle ;
\draw   (78.94,165.15) -- (81.53,167.75) -- (78.94,170.36) -- (76.35,167.75) -- cycle ;

\draw (67.67,67.98) node [anchor=north west][inner sep=0.75pt]    {$\beta $};
\draw (14,25.4) node [anchor=north west][inner sep=0.75pt]    {$a)$};
\draw (390.74,66.31) node [anchor=north west][inner sep=0.75pt]    {$\beta $};
\draw (336.07,26.73) node [anchor=north west][inner sep=0.75pt]    {$c)$};
\draw (381.87,185.87) node [anchor=north west][inner sep=0.75pt]  [font=\footnotesize]  {$\gamma _{1}$};
\draw (502.2,176.38) node [anchor=north west][inner sep=0.75pt]  [font=\footnotesize]  {$\gamma _{2}$};
\draw (79.99,173.79) node [anchor=north west][inner sep=0.75pt]  [font=\footnotesize]  {$\gamma _{2}$};
\draw (53.13,174.08) node [anchor=north west][inner sep=0.75pt]  [font=\footnotesize]  {$\gamma _{1}$};
\draw (373.89,163.38) node [anchor=north west][inner sep=0.75pt]  [font=\footnotesize]  {$\theta _{0}$};
\draw (430,91.42) node [anchor=north west][inner sep=0.75pt]  [font=\footnotesize]  {$\theta _{1}$};
\draw (403,169.42) node [anchor=north west][inner sep=0.75pt]  [font=\footnotesize]  {$\theta _{2}$};
\draw (477,163.98) node [anchor=north west][inner sep=0.75pt]  [font=\footnotesize]  {$\theta _{3}$};
\draw (456.33,78.31) node [anchor=north west][inner sep=0.75pt]  [font=\footnotesize]  {$\theta _{4}$};
\draw (504.78,153.42) node [anchor=north west][inner sep=0.75pt]  [font=\footnotesize]  {$\theta _{5}$};
\draw (214.74,66.31) node [anchor=north west][inner sep=0.75pt]    {$\beta $};
\draw (160.07,26.73) node [anchor=north west][inner sep=0.75pt]    {$b)$};
\draw (205.87,185.87) node [anchor=north west][inner sep=0.75pt]  [font=\footnotesize]  {$\gamma _{1}$};
\draw (276.34,73.66) node [anchor=north west][inner sep=0.75pt]  [font=\footnotesize]  {$\gamma _{2}$};
\draw (197.89,163.38) node [anchor=north west][inner sep=0.75pt]  [font=\footnotesize]  {$\theta _{0}$};
\draw (253.86,92.85) node [anchor=north west][inner sep=0.75pt]  [font=\footnotesize]  {$\theta _{1}$};
\draw (227,169.42) node [anchor=north west][inner sep=0.75pt]  [font=\footnotesize]  {$\theta _{2}$};
\draw (280.14,88.41) node [anchor=north west][inner sep=0.75pt]  [font=\footnotesize]  {$\theta _{3}$};
\draw (264.74,143.81) node [anchor=north west][inner sep=0.75pt]    {$\beta $};
\draw (443.54,138.81) node [anchor=north west][inner sep=0.75pt]    {$\beta $};
\draw (496.34,50.81) node [anchor=north west][inner sep=0.75pt]    {$\beta $};
\draw (50.29,158.56) node [anchor=north west][inner sep=0.75pt]  [font=\footnotesize]  {$\theta _{0}$};
\draw (83.49,160.96) node [anchor=north west][inner sep=0.75pt]  [font=\footnotesize]  {$\theta _{1}$};

\end{tikzpicture}}
\caption{Examples of snakes with choices of arcs $\theta_i$ in each nodal zone. Points inside shaded disks
represent arcs with the tangency order higher than the respective surface's exponent.}\label{4}
\end{figure}
\end{Exam}

\begin{Exam}
Consider $X$ the snake whose link is represented in both Figure \ref{5}a and Figure \ref{5}b. If $X=T(\gamma_1,\gamma_2)$ is oriented from $\gamma_1$ to $\gamma_2$, Figure \ref{5}a shows the corresponding arcs $\theta_i$ in nodal zones and hence the minimal sequence of $X$ is $\{0,3,6\}$. On the other hand, if $X=T(\gamma_2,\gamma_1)$ is oriented from $\gamma_2$ to $\gamma_1$, Figure \ref{5}b shows the corresponding arcs $\theta_i$ in nodal zones and hence the minimal sequence of $X$ is $\{0,2,5\}$.

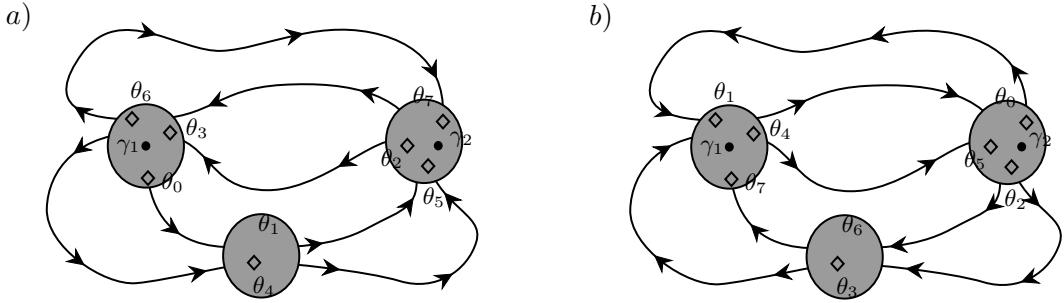
\begin{figure}[h!]
 \centering
{

\tikzset{every picture/.style={line width=0.75pt}} 

\begin{tikzpicture}[x=0.75pt,y=0.75pt,yscale=-1,xscale=1]

\draw    (115.83,122.53) .. controls (114.34,183.62) and (162.24,172.61) .. (174.24,173.27) .. controls (186.23,173.93) and (209.72,173.84) .. (227.61,164.03) .. controls (245.5,154.21) and (257.3,154.12) .. (250.4,128.03) .. controls (243.5,101.95) and (211.3,144.87) .. (177.41,144.87) .. controls (143.53,144.87) and (151.75,116.48) .. (116.36,115.82) .. controls (80.98,115.15) and (66.47,135) .. (65.87,154.15) .. controls (65.27,173.31) and (94.65,193.12) .. (110.85,191.8) .. controls (127.04,190.48) and (163.62,180.9) .. (175.02,181.23) .. controls (186.41,181.56) and (226.41,185.49) .. (244.1,190.78) .. controls (261.79,196.06) and (277.99,185.82) .. (282.48,170.96) .. controls (286.98,156.1) and (262.39,154.12) .. (256.1,125.06) .. controls (249.8,96) and (218.91,89.73) .. (184.73,91.71) .. controls (150.55,93.69) and (150.25,108.55) .. (116.06,109.21) .. controls (81.88,109.87) and (64.49,102.28) .. (83.68,75.86) .. controls (102.87,49.44) and (133.16,75.53) .. (154.15,74.54) .. controls (175.14,73.55) and (199.72,59.35) .. (229.41,65.29) .. controls (259.1,71.23) and (271.39,91.71) .. (263.29,122.42) ;
\draw [shift={(132.66,166.84)}, rotate = 216.19] [fill={rgb, 255:red, 0; green, 0; blue, 0 }  ][line width=0.08]  [draw opacity=0] (8.04,-3.86) -- (0,0) -- (8.04,3.86) -- (5.34,0) -- cycle    ;
\draw [shift={(204.84,171.66)}, rotate = 170.63] [fill={rgb, 255:red, 0; green, 0; blue, 0 }  ][line width=0.08]  [draw opacity=0] (8.04,-3.86) -- (0,0) -- (8.04,3.86) -- (5.34,0) -- cycle    ;
\draw [shift={(250.6,148.98)}, rotate = 124.24] [fill={rgb, 255:red, 0; green, 0; blue, 0 }  ][line width=0.08]  [draw opacity=0] (8.04,-3.86) -- (0,0) -- (8.04,3.86) -- (5.34,0) -- cycle    ;
\draw [shift={(213.14,132.91)}, rotate = 330.5] [fill={rgb, 255:red, 0; green, 0; blue, 0 }  ][line width=0.08]  [draw opacity=0] (8.04,-3.86) -- (0,0) -- (8.04,3.86) -- (5.34,0) -- cycle    ;
\draw [shift={(144.84,127.8)}, rotate = 46.79] [fill={rgb, 255:red, 0; green, 0; blue, 0 }  ][line width=0.08]  [draw opacity=0] (8.04,-3.86) -- (0,0) -- (8.04,3.86) -- (5.34,0) -- cycle    ;
\draw [shift={(79.58,126.27)}, rotate = 323.39] [fill={rgb, 255:red, 0; green, 0; blue, 0 }  ][line width=0.08]  [draw opacity=0] (8.04,-3.86) -- (0,0) -- (8.04,3.86) -- (5.34,0) -- cycle    ;
\draw [shift={(84.62,182.53)}, rotate = 216.25] [fill={rgb, 255:red, 0; green, 0; blue, 0 }  ][line width=0.08]  [draw opacity=0] (8.04,-3.86) -- (0,0) -- (8.04,3.86) -- (5.34,0) -- cycle    ;
\draw [shift={(146.24,185.6)}, rotate = 168.7] [fill={rgb, 255:red, 0; green, 0; blue, 0 }  ][line width=0.08]  [draw opacity=0] (8.04,-3.86) -- (0,0) -- (8.04,3.86) -- (5.34,0) -- cycle    ;
\draw [shift={(213.37,184.95)}, rotate = 187.54] [fill={rgb, 255:red, 0; green, 0; blue, 0 }  ][line width=0.08]  [draw opacity=0] (8.04,-3.86) -- (0,0) -- (8.04,3.86) -- (5.34,0) -- cycle    ;
\draw [shift={(270.24,187.22)}, rotate = 149.62] [fill={rgb, 255:red, 0; green, 0; blue, 0 }  ][line width=0.08]  [draw opacity=0] (8.04,-3.86) -- (0,0) -- (8.04,3.86) -- (5.34,0) -- cycle    ;
\draw [shift={(266.6,146.42)}, rotate = 48.2] [fill={rgb, 255:red, 0; green, 0; blue, 0 }  ][line width=0.08]  [draw opacity=0] (8.04,-3.86) -- (0,0) -- (8.04,3.86) -- (5.34,0) -- cycle    ;
\draw [shift={(224.06,94.55)}, rotate = 16.25] [fill={rgb, 255:red, 0; green, 0; blue, 0 }  ][line width=0.08]  [draw opacity=0] (8.04,-3.86) -- (0,0) -- (8.04,3.86) -- (5.34,0) -- cycle    ;
\draw [shift={(147.54,102.28)}, rotate = 335.15] [fill={rgb, 255:red, 0; green, 0; blue, 0 }  ][line width=0.08]  [draw opacity=0] (8.04,-3.86) -- (0,0) -- (8.04,3.86) -- (5.34,0) -- cycle    ;
\draw [shift={(80.04,102.84)}, rotate = 33.09] [fill={rgb, 255:red, 0; green, 0; blue, 0 }  ][line width=0.08]  [draw opacity=0] (8.04,-3.86) -- (0,0) -- (8.04,3.86) -- (5.34,0) -- cycle    ;
\draw [shift={(120.89,66.36)}, rotate = 194.55] [fill={rgb, 255:red, 0; green, 0; blue, 0 }  ][line width=0.08]  [draw opacity=0] (8.04,-3.86) -- (0,0) -- (8.04,3.86) -- (5.34,0) -- cycle    ;
\draw [shift={(194.98,65.91)}, rotate = 168.35] [fill={rgb, 255:red, 0; green, 0; blue, 0 }  ][line width=0.08]  [draw opacity=0] (8.04,-3.86) -- (0,0) -- (8.04,3.86) -- (5.34,0) -- cycle    ;
\draw [shift={(262.39,88.19)}, rotate = 243.52] [fill={rgb, 255:red, 0; green, 0; blue, 0 }  ][line width=0.08]  [draw opacity=0] (8.04,-3.86) -- (0,0) -- (8.04,3.86) -- (5.34,0) -- cycle    ;
\draw  [draw opacity=0][fill={rgb, 255:red, 155; green, 155; blue, 155 }  ,fill opacity=0.47 ] (96.63,122.53) .. controls (96.63,110.91) and (105.23,101.49) .. (115.83,101.49) .. controls (126.44,101.49) and (135.04,110.91) .. (135.04,122.53) .. controls (135.04,134.15) and (126.44,143.57) .. (115.83,143.57) .. controls (105.23,143.57) and (96.63,134.15) .. (96.63,122.53) -- cycle ;
\draw   (174.24,169.84) -- (177.34,173.27) -- (174.24,176.71) -- (171.13,173.27) -- cycle ;
\draw   (116.89,135.33) -- (120,138.77) -- (116.89,142.21) -- (113.79,138.77) -- cycle ;
\draw  [draw opacity=0][fill={rgb, 255:red, 155; green, 155; blue, 155 }  ,fill opacity=0.47 ] (154.8,178.01) .. controls (154.8,166.39) and (163.4,156.97) .. (174.01,156.97) .. controls (184.62,156.97) and (193.22,166.39) .. (193.22,178.01) .. controls (193.22,189.63) and (184.62,199.05) .. (174.01,199.05) .. controls (163.4,199.05) and (154.8,189.63) .. (154.8,178.01) -- cycle ;
\draw  [draw opacity=0][fill={rgb, 255:red, 155; green, 155; blue, 155 }  ,fill opacity=0.47 ] (236.96,120.22) .. controls (236.96,108.6) and (245.56,99.18) .. (256.17,99.18) .. controls (266.78,99.18) and (275.38,108.6) .. (275.38,120.22) .. controls (275.38,131.84) and (266.78,141.26) .. (256.17,141.26) .. controls (245.56,141.26) and (236.96,131.84) .. (236.96,120.22) -- cycle ;
\draw  [fill={rgb, 255:red, 0; green, 0; blue, 0 }  ,fill opacity=1 ] (114.1,122.53) .. controls (114.1,121.51) and (114.88,120.68) .. (115.83,120.68) .. controls (116.79,120.68) and (117.57,121.51) .. (117.57,122.53) .. controls (117.57,123.56) and (116.79,124.39) .. (115.83,124.39) .. controls (114.88,124.39) and (114.1,123.56) .. (114.1,122.53) -- cycle ;
\draw  [fill={rgb, 255:red, 0; green, 0; blue, 0 }  ,fill opacity=1 ] (261.56,122.42) .. controls (261.56,121.4) and (262.33,120.56) .. (263.29,120.56) .. controls (264.25,120.56) and (265.03,121.4) .. (265.03,122.42) .. controls (265.03,123.44) and (264.25,124.27) .. (263.29,124.27) .. controls (262.33,124.27) and (261.56,123.44) .. (261.56,122.42) -- cycle ;
\draw   (170.41,178.01) -- (173.52,181.45) -- (170.41,184.88) -- (167.3,181.45) -- cycle ;
\draw   (247.7,118.98) -- (250.81,122.42) -- (247.7,125.86) -- (244.59,122.42) -- cycle ;
\draw   (128.06,112.41) -- (131.17,115.85) -- (128.06,119.28) -- (124.95,115.85) -- cycle ;
\draw   (108.87,105.47) -- (111.98,108.91) -- (108.87,112.35) -- (105.76,108.91) -- cycle ;
\draw   (258.2,129.25) -- (261.3,132.69) -- (258.2,136.12) -- (255.09,132.69) -- cycle ;
\draw   (265.69,106.79) -- (268.8,110.23) -- (265.69,113.67) -- (262.58,110.23) -- cycle ;
\draw    (410.09,123.06) .. controls (408.59,184.15) and (456.5,173.14) .. (468.49,173.8) .. controls (480.49,174.46) and (503.98,174.37) .. (521.87,164.56) .. controls (539.76,154.74) and (551.55,154.65) .. (544.66,128.56) .. controls (537.76,102.47) and (505.56,145.4) .. (471.67,145.4) .. controls (437.79,145.4) and (446.01,117) .. (410.62,116.34) .. controls (375.24,115.68) and (360.73,135.53) .. (360.13,154.68) .. controls (359.53,173.84) and (388.91,193.65) .. (405.11,192.33) .. controls (421.3,191.01) and (457.88,181.43) .. (469.27,181.76) .. controls (480.67,182.09) and (520.67,186.02) .. (538.36,191.3) .. controls (556.05,196.59) and (572.24,186.35) .. (576.74,171.49) .. controls (581.24,156.63) and (556.65,154.65) .. (550.36,125.59) .. controls (544.06,96.53) and (513.17,90.26) .. (478.99,92.24) .. controls (444.81,94.22) and (444.51,109.08) .. (410.32,109.74) .. controls (376.14,110.4) and (358.75,102.8) .. (377.94,76.39) .. controls (397.13,49.97) and (427.41,76.06) .. (448.4,75.07) .. controls (469.39,74.07) and (493.98,59.87) .. (523.67,65.82) .. controls (553.35,71.76) and (565.65,92.24) .. (557.55,122.95) ;
\draw [shift={(421.04,162.3)}, rotate = 45.22] [fill={rgb, 255:red, 0; green, 0; blue, 0 }  ][line width=0.08]  [draw opacity=0] (8.04,-3.86) -- (0,0) -- (8.04,3.86) -- (5.34,0) -- cycle    ;
\draw [shift={(491.25,173.3)}, rotate = 353.14] [fill={rgb, 255:red, 0; green, 0; blue, 0 }  ][line width=0.08]  [draw opacity=0] (8.04,-3.86) -- (0,0) -- (8.04,3.86) -- (5.34,0) -- cycle    ;
\draw [shift={(539.6,155.16)}, rotate = 321.18] [fill={rgb, 255:red, 0; green, 0; blue, 0 }  ][line width=0.08]  [draw opacity=0] (8.04,-3.86) -- (0,0) -- (8.04,3.86) -- (5.34,0) -- cycle    ;
\draw [shift={(514.24,129.52)}, rotate = 149.97] [fill={rgb, 255:red, 0; green, 0; blue, 0 }  ][line width=0.08]  [draw opacity=0] (8.04,-3.86) -- (0,0) -- (8.04,3.86) -- (5.34,0) -- cycle    ;
\draw [shift={(444.5,134.11)}, rotate = 226.87] [fill={rgb, 255:red, 0; green, 0; blue, 0 }  ][line width=0.08]  [draw opacity=0] (8.04,-3.86) -- (0,0) -- (8.04,3.86) -- (5.34,0) -- cycle    ;
\draw [shift={(380.52,122.44)}, rotate = 150.46] [fill={rgb, 255:red, 0; green, 0; blue, 0 }  ][line width=0.08]  [draw opacity=0] (8.04,-3.86) -- (0,0) -- (8.04,3.86) -- (5.34,0) -- cycle    ;
\draw [shift={(372.55,178.02)}, rotate = 40.83] [fill={rgb, 255:red, 0; green, 0; blue, 0 }  ][line width=0.08]  [draw opacity=0] (8.04,-3.86) -- (0,0) -- (8.04,3.86) -- (5.34,0) -- cycle    ;
\draw [shift={(432.92,187.64)}, rotate = 348.68] [fill={rgb, 255:red, 0; green, 0; blue, 0 }  ][line width=0.08]  [draw opacity=0] (8.04,-3.86) -- (0,0) -- (8.04,3.86) -- (5.34,0) -- cycle    ;
\draw [shift={(499.42,184.43)}, rotate = 7] [fill={rgb, 255:red, 0; green, 0; blue, 0 }  ][line width=0.08]  [draw opacity=0] (8.04,-3.86) -- (0,0) -- (8.04,3.86) -- (5.34,0) -- cycle    ;
\draw [shift={(557.55,191.13)}, rotate = 338.73] [fill={rgb, 255:red, 0; green, 0; blue, 0 }  ][line width=0.08]  [draw opacity=0] (8.04,-3.86) -- (0,0) -- (8.04,3.86) -- (5.34,0) -- cycle    ;
\draw [shift={(566.44,152.77)}, rotate = 224.42] [fill={rgb, 255:red, 0; green, 0; blue, 0 }  ][line width=0.08]  [draw opacity=0] (8.04,-3.86) -- (0,0) -- (8.04,3.86) -- (5.34,0) -- cycle    ;
\draw [shift={(525.89,97.57)}, rotate = 200.58] [fill={rgb, 255:red, 0; green, 0; blue, 0 }  ][line width=0.08]  [draw opacity=0] (8.04,-3.86) -- (0,0) -- (8.04,3.86) -- (5.34,0) -- cycle    ;
\draw [shift={(448.83,99.54)}, rotate = 155.18] [fill={rgb, 255:red, 0; green, 0; blue, 0 }  ][line width=0.08]  [draw opacity=0] (8.04,-3.86) -- (0,0) -- (8.04,3.86) -- (5.34,0) -- cycle    ;
\draw [shift={(381.45,106.95)}, rotate = 200.47] [fill={rgb, 255:red, 0; green, 0; blue, 0 }  ][line width=0.08]  [draw opacity=0] (8.04,-3.86) -- (0,0) -- (8.04,3.86) -- (5.34,0) -- cycle    ;
\draw [shift={(407.23,65.1)}, rotate = 10.65] [fill={rgb, 255:red, 0; green, 0; blue, 0 }  ][line width=0.08]  [draw opacity=0] (8.04,-3.86) -- (0,0) -- (8.04,3.86) -- (5.34,0) -- cycle    ;
\draw [shift={(481.28,68.2)}, rotate = 346.8] [fill={rgb, 255:red, 0; green, 0; blue, 0 }  ][line width=0.08]  [draw opacity=0] (8.04,-3.86) -- (0,0) -- (8.04,3.86) -- (5.34,0) -- cycle    ;
\draw [shift={(552.5,81.74)}, rotate = 54.94] [fill={rgb, 255:red, 0; green, 0; blue, 0 }  ][line width=0.08]  [draw opacity=0] (8.04,-3.86) -- (0,0) -- (8.04,3.86) -- (5.34,0) -- cycle    ;
\draw  [draw opacity=0][fill={rgb, 255:red, 155; green, 155; blue, 155 }  ,fill opacity=0.47 ] (390.88,123.06) .. controls (390.88,111.44) and (399.48,102.02) .. (410.09,102.02) .. controls (420.7,102.02) and (429.3,111.44) .. (429.3,123.06) .. controls (429.3,134.68) and (420.7,144.1) .. (410.09,144.1) .. controls (399.48,144.1) and (390.88,134.68) .. (390.88,123.06) -- cycle ;
\draw   (468.49,170.37) -- (471.6,173.8) -- (468.49,177.24) -- (465.39,173.8) -- cycle ;
\draw   (411.15,135.86) -- (414.26,139.3) -- (411.15,142.74) -- (408.05,139.3) -- cycle ;
\draw  [draw opacity=0][fill={rgb, 255:red, 155; green, 155; blue, 155 }  ,fill opacity=0.47 ] (449.06,178.54) .. controls (449.06,166.92) and (457.66,157.5) .. (468.27,157.5) .. controls (478.87,157.5) and (487.48,166.92) .. (487.48,178.54) .. controls (487.48,190.16) and (478.87,199.58) .. (468.27,199.58) .. controls (457.66,199.58) and (449.06,190.16) .. (449.06,178.54) -- cycle ;
\draw  [draw opacity=0][fill={rgb, 255:red, 155; green, 155; blue, 155 }  ,fill opacity=0.47 ] (531.22,120.75) .. controls (531.22,109.13) and (539.82,99.71) .. (550.43,99.71) .. controls (561.04,99.71) and (569.64,109.13) .. (569.64,120.75) .. controls (569.64,132.37) and (561.04,141.79) .. (550.43,141.79) .. controls (539.82,141.79) and (531.22,132.37) .. (531.22,120.75) -- cycle ;
\draw  [fill={rgb, 255:red, 0; green, 0; blue, 0 }  ,fill opacity=1 ] (408.36,123.06) .. controls (408.36,122.04) and (409.14,121.21) .. (410.09,121.21) .. controls (411.05,121.21) and (411.83,122.04) .. (411.83,123.06) .. controls (411.83,124.08) and (411.05,124.92) .. (410.09,124.92) .. controls (409.14,124.92) and (408.36,124.08) .. (408.36,123.06) -- cycle ;
\draw  [fill={rgb, 255:red, 0; green, 0; blue, 0 }  ,fill opacity=1 ] (555.82,122.95) .. controls (555.82,121.92) and (556.59,121.09) .. (557.55,121.09) .. controls (558.51,121.09) and (559.29,121.92) .. (559.29,122.95) .. controls (559.29,123.97) and (558.51,124.8) .. (557.55,124.8) .. controls (556.59,124.8) and (555.82,123.97) .. (555.82,122.95) -- cycle ;
\draw   (464.67,178.54) -- (467.78,181.98) -- (464.67,185.41) -- (461.56,181.98) -- cycle ;
\draw   (541.96,119.51) -- (545.07,122.95) -- (541.96,126.39) -- (538.85,122.95) -- cycle ;
\draw   (422.32,112.94) -- (425.42,116.37) -- (422.32,119.81) -- (419.21,116.37) -- cycle ;
\draw   (403.13,106) -- (406.23,109.44) -- (403.13,112.88) -- (400.02,109.44) -- cycle ;
\draw   (552.45,129.78) -- (555.56,133.22) -- (552.45,136.65) -- (549.35,133.22) -- cycle ;
\draw   (559.95,107.32) -- (563.06,110.76) -- (559.95,114.2) -- (556.84,110.76) -- cycle ;

\draw (43.7,48.61) node [anchor=north west][inner sep=0.75pt]    {$a)$};
\draw (99.73,117.87) node [anchor=north west][inner sep=0.75pt]  [font=\footnotesize]  {$\gamma _{1}$};
\draw (122.14,134.56) node [anchor=north west][inner sep=0.75pt]  [font=\footnotesize]  {$\theta _{0}$};
\draw (171.02,155.61) node [anchor=north west][inner sep=0.75pt]  [font=\footnotesize]  {$\theta _{1}$};
\draw (232.79,122.59) node [anchor=north west][inner sep=0.75pt]  [font=\footnotesize]  {$\theta _{2}$};
\draw (134.54,107.4) node [anchor=north west][inner sep=0.75pt]  [font=\footnotesize]  {$\theta _{3}$};
\draw (168.32,186.98) node [anchor=north west][inner sep=0.75pt]  [font=\footnotesize]  {$\theta _{4}$};
\draw (253.96,142.02) node [anchor=north west][inner sep=0.75pt]  [font=\footnotesize]  {$\theta _{5}$};
\draw (106.55,88.58) node [anchor=north west][inner sep=0.75pt]  [font=\footnotesize]  {$\theta _{6}$};
\draw (248.98,91.22) node [anchor=north west][inner sep=0.75pt]  [font=\footnotesize]  {$\theta _{7}$};
\draw (337.95,49.14) node [anchor=north west][inner sep=0.75pt]    {$b)$};
\draw (393.98,118.4) node [anchor=north west][inner sep=0.75pt]  [font=\footnotesize]  {$\gamma _{1}$};
\draw (416.4,135.09) node [anchor=north west][inner sep=0.75pt]  [font=\footnotesize]  {$\theta _{7}$};
\draw (465.28,156.14) node [anchor=north west][inner sep=0.75pt]  [font=\footnotesize]  {$\theta _{6}$};
\draw (527.05,123.12) node [anchor=north west][inner sep=0.75pt]  [font=\footnotesize]  {$\theta _{5}$};
\draw (428.8,107.93) node [anchor=north west][inner sep=0.75pt]  [font=\footnotesize]  {$\theta _{4}$};
\draw (462.58,187.51) node [anchor=north west][inner sep=0.75pt]  [font=\footnotesize]  {$\theta _{3}$};
\draw (547.42,142.99) node [anchor=north west][inner sep=0.75pt]  [font=\footnotesize]  {$\theta _{2}$};
\draw (400.81,89.1) node [anchor=north west][inner sep=0.75pt]  [font=\footnotesize]  {$\theta _{1}$};
\draw (542.44,91.75) node [anchor=north west][inner sep=0.75pt]  [font=\footnotesize]  {$\theta _{0}$};
\draw (267.65,114) node [anchor=north west][inner sep=0.75pt]  [font=\footnotesize]  {$\gamma _{2}$};
\draw (559.9,114.44) node [anchor=north west][inner sep=0.75pt]  [font=\footnotesize]  {$\gamma _{2}$};

\end{tikzpicture}}
\caption{Examples of snakes with choices of arcs $\theta_i$ in each nodal zone. Points inside shaded disks
represent arcs with the tangency order higher than the respective snake's exponent.}\label{5}
\end{figure}
\end{Exam}

In the rest of this section, let $X=T(\gamma_1, \gamma_2)$ be a $\beta$-snake oriented from $\gamma_1$ to $\gamma_2$ and let $\{N_i\}_{i=0}^{m}$, $\{S_i\}_{i=1}^m$ be the decomposition of the $V(X)$ into nodal zones and segments, respectively, as in Definition \ref{Def: minimal sequence}, enumerated accordingly to the given orientation.

\begin{Lem}\label{Lem: tord segments and nodal zones}
    For every $i \in \{0,1,\dots,m\}$, $j\in \{1,\dots,m\}$ such that $j \ne i, i+1$,we have $\operatorname{tord}(N_i,S_j) = \beta.$
\end{Lem}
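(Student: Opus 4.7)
My plan is to first observe that the lower bound $\operatorname{tord}(N_i,S_j)\ge\beta$ follows immediately from $\mu(X)=\beta$, since $X$ is a $\beta$-snake and hence every two arcs of $X$ have outer tangency at least $\beta$. All the content of the lemma is therefore in establishing the reverse inequality $\operatorname{tord}(\alpha,\eta)\le\beta$ for every choice of $\alpha\in N_i$ and $\eta\in S_j$ with $j\ne i,i+1$.

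I would argue by contradiction, assuming $\operatorname{tord}(\alpha,\eta)=\delta>\beta$ for some such pair. Reversing the orientation of $X$ if necessary, I may assume $j>i+1$, so that $N_i$ precedes $S_j$ along the oriented link of $X$. The main tool is the non-archimedean property of the tangency order (Remark \ref{Rem: non-archimedean property}) together with the explicit structural description of the segment/nodal zone decomposition recorded as Proposition 4.30 of \cite{GabrielovSouza}.

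Since $\eta\in S_j$ is a segment arc, Definition \ref{constant zone} furnishes a non-singular $\beta$-H\"older triangle $T=T(\sigma_1,\sigma_2)\subset X$ whose Valette link is a constant multiplicity zone and in which $\eta$ is a generic interior arc. The maximality of $S_j$ in $S(X)$ (Definition \ref{Def: maximal zone in}) forces $V(T)\subset S_j$, and the genericity of $\eta$ gives $\operatorname{tord}(\eta,\sigma_k)=\beta$ for $k=1,2$. Applying the non-archimedean property to $\alpha,\eta,\sigma_k$, and using that $\delta\ne\beta$, I would conclude $\operatorname{tord}(\alpha,\sigma_k)=\beta$. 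I would then propagate this equality step by step across the successive zones $N_{j-1},S_{j-1},N_{j-2},\ldots,S_{i+1}$ along the link of $X$: at each step, I would select the boundary arc of the current zone's maximal H\"older triangle lying on the side nearer to $N_i$, produce an arc in the adjacent zone with outer tangency strictly greater than $\beta$ to that boundary arc (using the structure of the decomposition), and apply the non-archimedean property once more. Iterating until the process reaches $N_i$, I would obtain an arc $\alpha'\in N_i$ with $\operatorname{tord}(\alpha,\alpha')=\beta$ chosen close enough to $\alpha$ inside $N_i$ that the LNE structure of $N_i$ forces $\operatorname{tord}(\alpha,\alpha')>\beta$, a contradiction.

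The main obstacle will be the correct execution of the propagation step at each transition between a segment and its adjacent nodal zone: since the boundary arcs of the maximal $\beta$-H\"older triangle inside a segment zone lie inside that segment rather than in the neighbouring nodal zone, bridging them requires a careful appeal to the zone-adjacency relations from Proposition 4.30 of \cite{GabrielovSouza}, and potentially a case split depending on whether the intermediate nodal zones have order equal to $\beta$ or strictly greater than $\beta$. Making this propagation genuinely terminate in a contradiction (rather than merely producing an arc of $N_i$ with tangency $\beta$ to $\alpha$, which by itself is not absurd) is the technical crux of the argument.
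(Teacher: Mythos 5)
Your reduction to the upper bound and your first use of the non-archimedean property are sound (with the small caveat that genericity of $\eta$ in $T$ only gives $\itord(\eta,\sigma_k)=\beta$; to feed the min-rule you need $\tord(\eta,\sigma_k)=\beta$, which requires knowing that segments of a snake are LNE zones). The genuine gap is the propagation step, and you have in effect flagged it yourself. At each transition between adjacent zones you can indeed find a pair of arcs with contact greater than $\beta$, and the min-rule then carries the equality $\tord(\alpha,\cdot)=\beta$ across that transition; but to reach the next transition you must also traverse the interior of the intermediate zone, i.e.\ pass from the arc $\nu$ you just produced to an arc $\nu'$ near its far side. Intermediate segments are $\beta$-zones, so in general $\tord(\nu,\nu')=\beta$, and then the non-archimedean property only gives $\tord(\alpha,\nu')\ge\beta$: the chain stalls at the very first segment you must cross. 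Intermediate nodal zones are no better, since they may lie in the same node as $N_i$, so the arcs you meet there can legitimately have contact greater than $\beta$ with $\alpha$, which neither preserves the chain nor is itself absurd. Finally, the endgame is not a contradiction as stated: producing some $\alpha'\in N_i$ with $\tord(\alpha,\alpha')=\beta$ is only impossible if one knows that the order of $N_i$ exceeds $\beta$, and you cannot simultaneously let the propagation dictate $\alpha'$ and ``choose it close to $\alpha$''. Since you yourself identify this as the unsettled crux, the proposal is a plan rather than a proof.

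For comparison, the paper does not argue along the link at all: it derives the lemma in one step from Proposition 4.27 of \cite{GabrielovSouza}, which asserts that an arc whose tangency order with an arc of a nodal zone exceeds $\beta$ must itself belong to that nodal zone, so the disjointness $S_j\cap N_i=\emptyset$ for $j\ne i,i+1$ finishes the argument. What your propagation scheme is really trying to do is reprove that structural fact using only the non-archimedean property, and the sketch does not accomplish this; either invoke the cited proposition directly or supply a genuinely different argument for it.
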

\begin{proof}
    By Proposition 4.27 of \cite{GabrielovSouza}, given $\gamma \in N_i$, if $\itord(\gamma,\gamma') >\beta$ then $\gamma' \in N_i$, for any $\gamma'\in V(X)$. Thus, since $S_j\cap N_i= \emptyset$ for all $j\ne i, i+1$, we must have $\operatorname{tord}(S_i,N_j) = \beta$. 
\end{proof}

\begin{Lem}\label{Lem: segments gr beta implies adj nodal z gr beta}
   Let $S, S'$ be distinct segments of $X$ and let $N,\tilde N$ and $N',\Tilde{N'}$ be nodal zones adjacent to $S$ and $S'$, respectively. Assume that $\operatorname{tord}(\tilde N, \tilde N') >\beta$ and $\operatorname{tord}(S,S')>\beta$. Then $\operatorname{tord}(N,N')>\beta$.
\end{Lem}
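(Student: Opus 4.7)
My plan is to exhibit arcs $\nu\in N$ and $\nu'\in N'$ with $\tord(\nu,\nu')>\beta$. From the hypotheses, extract $\sigma\in S,\sigma'\in S'$ with $\tord(\sigma,\sigma')>\beta$, and $\tilde\nu\in\tilde N,\tilde\nu'\in\tilde N'$ with $\tord(\tilde\nu,\tilde\nu')>\beta$. Let $T_S=T(\nu_0,\tilde\nu_0)\subset X$ denote the LNE $\beta$-H\"older triangle whose set of generic arcs coincides with the segment $S$, so that $\nu_0\in N$ and $\tilde\nu_0\in\tilde N$ are its boundary arcs; analogously define $T_{S'}=T(\nu_0',\tilde\nu_0')$. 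Being LNE $\beta$-H\"older triangles, Remark \ref{Rem: NE HT condition} forces $\tord(\nu_0,\tilde\nu_0)=\tord(\nu_0',\tilde\nu_0')=\beta$.

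The central idea is that $\tord(\sigma,\sigma')>\beta$ forces $S$ and $S'$ to lie in the same connected component of the $(a,\beta)$-horn neighborhood of $\sigma$, hence they share a common multiplicity. The natural map $\xi\mapsto\xi^*$ sending each generic arc $\xi\in S$ to the unique generic arc of $S'$ in the same horn component preserves the property ``$\tord>\beta$'' and extends by continuity to a bijection between the boundary pairs of $T_S$ and $T_{S'}$. This bijection is either orientation-preserving (matching $\nu_0\leftrightarrow\nu_0'$ and $\tilde\nu_0\leftrightarrow\tilde\nu_0'$) or orientation-reversing (matching $\nu_0\leftrightarrow\tilde\nu_0'$ and $\tilde\nu_0\leftrightarrow\nu_0'$). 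In the orientation-preserving case we immediately obtain $\tord(\nu_0,\nu_0')>\beta$ and the lemma follows with $\nu=\nu_0$, $\nu'=\nu_0'$.

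To rule out the orientation-reversing case, I would first transfer the hypothesis $\tord(\tilde N,\tilde N')>\beta$ to the specific boundary arcs, i.e., show $\tord(\tilde\nu_0,\tilde\nu_0')>\beta$, using the LNE structure of nodal zones in a snake (Remark \ref{Rem:words}) together with a limiting argument along $S$ and $S'$ approaching $\tilde\nu_0$ and $\tilde\nu_0'$. Orientation-reversal would then give $\tord(\tilde\nu_0,\nu_0')>\beta$; combined with $\tord(\tilde\nu_0,\tilde\nu_0')>\beta$, the non-archimedean property (Remark \ref{Rem: non-archimedean property}) applied to the triple $\tilde\nu_0,\nu_0',\tilde\nu_0'$ forces $\tord(\nu_0',\tilde\nu_0')>\beta$, contradicting the identity $\tord(\nu_0',\tilde\nu_0')=\beta$ noted above.

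The main obstacle is the rigorous justification of the boundary extension of the horn-component pairing, together with the transfer of the nodal-zone hypothesis $\tord(\tilde N,\tilde N')>\beta$ to the specific boundary arcs $\tilde\nu_0,\tilde\nu_0'$; both steps rely on careful use of the constant-zone multiplicity framework of \cite{GabrielovSouza} applied to the adjacency between the segments $S,S'$ and their respective LNE nodal zones.
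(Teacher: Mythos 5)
Your outline is not wrong in spirit, but its load-bearing steps are exactly the ones you leave unproved, and they constitute essentially the whole content of the lemma. The existence of the pairing $\xi\mapsto\xi^*$ between the two segments, its uniqueness, the claim that it ``extends by continuity'' to the boundary arcs, and the dichotomy preserving/reversing are all asserted rather than established. Two concrete problems: (i) the hypothesis $\tord(S,S')>\beta$ only provides \emph{one} pair $\sigma,\sigma'$ with high contact (the tangency order of zones is a supremum), so the fact that \emph{every} generic arc of $S$ admits a partner in $S'$ already requires the constant-multiplicity machinery you are implicitly invoking; (ii) strict inequalities of the form $\tord>\beta$ are not preserved by limiting arguments -- the infimum of $\tord(\xi,\xi^*)$ over $\xi\in S$ may equal $\beta$, so ``extension by continuity'' does not by itself yield $\tord(\nu_0,\nu_0')>\beta$ even in the orientation-preserving case, nor $\tord(\tilde\nu_0,\nu_0')>\beta$ in the reversing case. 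In addition, the preliminary claim that there is an LNE $\beta$-H\"older triangle $T_S=T(\nu_0,\tilde\nu_0)$ whose generic arcs are exactly $S$, with $\tord(\nu_0,\tilde\nu_0)=\beta$, needs an argument (it is delicate precisely when the two nodal zones adjacent to a segment lie in a common node, as for bubbles), and your ``transfer'' step needs the nodal zones to be zones of order $>\beta$; the LNE property quoted from Remark \ref{Rem:words} does not give that, so a further citation or argument is required before the non-archimedean property (Remark \ref{Rem: non-archimedean property}) can be applied. Your closing use of the isosceles trick to exclude the orientation-reversing case is fine, but only once all of the above is in place.

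The paper avoids all of this by working at the level of nodes and clusters: since $\tord(\tilde N,\tilde N')>\beta$, the zones $\tilde N,\tilde N'$ lie in a common node $\tilde{\mathcal N}$; since $\tord(S,S')>\beta$, the segments $S,S'$ lie in a common cluster (Definition \ref{Def: clusters}); Proposition 4.59 of \cite{GabrielovSouza} then forces the nodes $\mathcal N$ and $\mathcal N'$ containing $N$ and $N'$ to coincide, and the conclusion $\tord(N,N')>\beta$ is immediate from the definition of a node. In effect, the statement you would need to prove about the pairing and its boundary behaviour is the content of that cited proposition; either invoke it (which collapses your proof to the paper's) or supply a genuine proof of the pairing, its boundary extension with contact staying above $\beta$, and the order of nodal zones.
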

\begin{proof}
    Suppose that $N$ and $N'$ are on nodes $\mathcal{N}$ and $\mathcal{N}'$, respectively. We have $\operatorname{tord}(\tilde N, \tilde N') >\beta$, implying that $\tilde N$ and $\tilde N'$ are in the same node $\tilde{\mathcal{N}}$. Since $\operatorname{tord}(S,S')>\beta$, $S$ and $S'$ belongs to the same cluster of $\mathcal{S}(\tilde{\mathcal{N}},\mathcal{N})$ and the same cluster of $\mathcal{S}(\tilde{\mathcal{N}},\mathcal{N}')$. Therefore, $\mathcal{N}=\mathcal{N}'$ by Proposition 4.59 of \cite{GabrielovSouza} and the result follows.
\end{proof}

\begin{remark}
    Lemmas \ref{Lem: tord segments and nodal zones} and \ref{Lem: segments gr beta implies adj nodal z gr beta} are also true when $X$ is a circular snake with nodal zones, and their respective proofs are analogous.
\end{remark}

\begin{Teo}\label{Teo: minimal pancake decomposition}
    Let $\{j_0,j_1,\dots,j_p\}$ be the minimal sequence of $X$. Consider the following decomposition of $X$ into H\"older triangles $\{X_i\}_{i=1}^{p+1}$, with $X_i = T(\lambda_{i-1},\lambda_i)$ defined recursively as follows: 
    \begin{enumerate}
        \item set $\lambda_0 = \gamma_1$ and choose any $\lambda_1 \in S_{j_1}$;
        \item for all $1<i\le p$, choose $\lambda_i \in S_{j_i}$ such that $\mathcal{H}(X_i)_{a,\mu(X)}(\lambda_i)\setminus\{0\}$ consists in only one connected component, for $a>0$ small enough;
        \item set $\lambda_{p+1}=\gamma_2$.
    \end{enumerate}
    Then,  $\{X_i\}_{i=1}^{p+1}$ is a minimal pancake decomposition of $X$. 
\end{Teo}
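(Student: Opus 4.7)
The plan is to verify three statements: (a) each $X_i$ is LNE and hence a bona fide pancake; (b) the $\{X_i\}_{i=1}^{p+1}$ cover $X$ with intersections of codimension at least one; and (c) no pancake decomposition of $X$ has fewer than $p+1$ elements. Parts (a) and (b) concern the internal structure of the constructed decomposition, while (c) --- the minimality itself --- is the combinatorial heart of the argument.

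For (a), the defining property of the minimal sequence yields that $T(\theta_{j_{i-1}}, \theta_k)$ is LNE for every $j_{i-1} < k < j_i$. Because LNE-ness passes to sub-triangles of a H\"older triangle (via Remark \ref{Rem: NE HT condition}, since inner tangency orders of pairs of arcs are intrinsic to the pair), one also obtains that $T(\theta_k, \theta_{k'})$ is LNE for every $j_{i-1} \le k < k' \le j_i - 1$. Hence no two of the nodal zones $N_{j_{i-1}}, \ldots, N_{j_i - 1}$ lie in the same node of $X$, and by the contrapositive of Lemma \ref{Lem: segments gr beta implies adj nodal z gr beta} no two segments fully contained in $X_i$ belong to a common cluster. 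Combined with Lemma \ref{Lem: tord segments and nodal zones} and the LNE-ness of individual segments and nodal zones of a snake, this gives $\tord(\gamm,\gamm') = \itord(\gamm,\gamm')$ for every pair of arcs $\gamm,\gamm' \in X_i$, so $X_i$ is LNE. The multiplicity-one condition on $\lambda_i$ in step (2) plays a dual role: it excludes pathological cluster-pairings between the partial segments $S_{j_{i-1}}$ and $S_{j_i}$ that could otherwise obstruct LNE-ness of $X_i$, and it ensures $X_i \cap X_{i+1}$ reduces to the arc $\lambda_i$, supplying the dimension condition required for (b).

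For (c), let $\{Y_k\}_{k=1}^q$ be an arbitrary pancake decomposition of $X$. By Remark \ref{Rem:pancake of holder triangle is holder triangle} each $Y_k = T(\mu_{k-1}, \mu_k)$ is a H\"older triangle; ordering them along the orientation of $X$ we have $\mu_0 = \gamma_1$ and $\mu_q = \gamma_2$. I claim by induction on $k \ge 1$ that $\mu_k$ does not lie in $N_{j_k}$ nor strictly past it in the orientation --- equivalently, $\mu_k$ belongs to $S_\ell$ for some $\ell \le j_k$ or to $N_\ell$ for some $\ell \le j_k - 1$. Granting the induction at $k - 1$ (the base case $k = 1$ using only that $\mu_0 = \gamma_1 \in N_0 = N_{j_0}$), the pancake $Y_k$ fully contains the nodal zone $N_{j_{k-1}}$. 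Were $\mu_k$ in $N_{j_k}$ or beyond, choosing $\tilde\theta_{j_{k-1}} \in N_{j_{k-1}}$ and $\tilde\theta_{j_k} \in N_{j_k} \cap Y_k$ would produce a sub-triangle $T(\tilde\theta_{j_{k-1}}, \tilde\theta_{j_k}) \subset Y_k$ that is non-LNE by the definition of the minimal sequence (which, by the remark following Definition \ref{Def: minimal sequence}, depends only on the nodal zones and not on the representative arcs), contradicting LNE-ness of $Y_k$. Finally, if $q \le p$ then $\mu_q = \gamma_2 \in N_m$ together with the inductive claim forces $m \le j_q - 1 \le j_p - 1 < j_p \le m$, an impossibility; hence $q \ge p + 1$, as required.

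The main obstacle I anticipate lies in step (a): carefully ruling out that the partial segments $S_{j_{i-1}}$ and $S_{j_i}$ could belong to a common cluster, which would yield arcs in $X_i$ of tangency order strictly exceeding $\beta$ and so violate LNE-ness. The multiplicity-one condition on $\lambda_i$ is the decisive ingredient: it guarantees that the horn-neighborhood of $\lambda_i$ in $X_i$ has only one connected component, so any cluster-partner of $\lambda_i$ lying in $S_{j_{i-1}}$ must have been cut out of $X_i$ by the earlier choice of $\lambda_{i-1}$. Justifying that such a $\lambda_i$ always exists --- and that this genuinely resolves the cluster issue in every case --- is where I expect the proof to demand the most care.
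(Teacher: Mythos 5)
Your skeleton is the same as the paper's: verify that each $X_i$ is LNE, then show that no LNE pancake can meet two of the zones $N_{j_0},\dots,N_{j_p}$, which forces at least $p+1$ pancakes. Your minimality argument (c) is correct; the induction on the position of $\mu_k$ along the link is just a repackaging of the paper's counting argument, and both rest on the same legitimate fact that $T(\tilde\theta_{j_{i-1}},\tilde\theta_{j_i})$ is non-LNE for \emph{any} choice of representative arcs, since the minimal sequence is independent of the arcs $\theta_i$.

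The genuine gap is in part (a), exactly at the point you yourself flag as ``where the proof demands the most care'': the case $\tord(S_{j_{i-1}},S_{j_i})>\beta$. You assert that condition (2) ``excludes pathological cluster-pairings'' and that the cluster partner of $\lambda_i$ ``must have been cut out of $X_i$ by the earlier choice of $\lambda_{i-1}$,'' but you prove neither that arcs satisfying (2) exist nor that (2) suffices for LNE-ness of $X_i$ --- and this is precisely the substance of the paper's proof. There one takes $\lambda'_{i-1}\in S_{j_i}$ with $\tord(\lambda_{i-1},\lambda'_{i-1})>\beta$ and chooses $\lambda_i$ as a generic arc of $T(\theta_{j_i-1},\lambda'_{i-1})$ sufficiently close to $\theta_{j_i-1}$ (existence via the sequence of generic arcs in Remark \ref{Rem: generic arcs of a non-singular HT}), which yields $\tord(\lambda_{i-1},\mathcal{H}(X_i)_{a,\mu(X)}(\lambda_i))=\beta$ and hence (2); one then checks that any $\lambda_i$ satisfying (2) gives $\tord(\lambda_i,T(\lambda_{i-1},\theta_{j_{i-1}}))=\beta$, so $X_i$ is LNE. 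None of this can be taken for granted: for a badly placed $\lambda_i\in S_{j_i}$ the triangle $X_i$ genuinely fails to be LNE, so both the achievability and the sufficiency of (2) must be argued, not anticipated. Two smaller inaccuracies: condition (2) is not what makes $X_i\cap X_{i+1}$ reduce to $\lambda_i$ (that is automatic for consecutive subtriangles of a H\"older triangle), and your blanket conclusion ``$\tord=\itord$ for every pair of arcs of $X_i$'' silently omits the pairs with one arc in each of the two partial segments at the ends of $X_i$, which is again the very cluster case left open.
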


\begin{proof}
    Choosing any $\lambda_1$ in $S_{j_1}$ we obtain that $X_1$ is LNE, otherwise, it follows from the condition on the construction of the minimal sequence and Lemma \ref{Lem: tord segments and nodal zones} that $\operatorname{tord}(S_k, S_{j_1})>\beta$, for some segment $S_k$ where $j_0 < k < j_1$. However, Lemma \ref{Lem: segments gr beta implies adj nodal z gr beta} implies that there would exist a nodal zone $N_l$, adjacent to the segment $S_k$, with $0\le l<j_1$, where $\operatorname{tord}(N_l,N_{j_1 - 1})>\beta$, a contradiction with the minimality of $j_1$ in the minimal sequence of $X$. 

    If $\operatorname{tord}(S_{j_2},S_{j_1})=\beta$, then any arc $\lambda_2 \in S_{j_2}$ clearly satisfies $(2)$ and a similar argument shows that $X_2$ is LNE. If $\operatorname{tord}(S_{j_2}, S_{j_1})>\beta$, we must be careful when choosing $\lambda_2 \in S_{j_2}$ (see Figure \ref{Fig: Condition (2) of Theorem}). In this case we necessarily have $\operatorname{tord}(N_{j_1},N_{j_2})>\beta$. Notice that the nodal zones adjacent to $S_{j_2}$ are $N_{j_2}$ and $N_{j_2 - 1}$. Let $\lambda'_1 \in S_{j_2}$ be an arc such that $\operatorname{tord}(\lambda_1,\lambda'_1)>\beta$. Given an arc $\theta_{j_2 - 1} \in N_{j_2 - 1}$ we have $\lambda_2 \in G(T(\theta_{j_2 - 1},\lambda'_1)) \subset S_{j_2}$ such that $\operatorname{tord}(\lambda_1, \mathcal{H}(X_2)_{a,\mu(X)}(\lambda_2)) = \beta$ for any $a>0$ small enough (the existence of such arcs $\lambda_2$ is guaranteed by taking a sequence of generic arcs of $T(\theta_{j_2 - 1},\lambda'_1)$ converging to $\theta_{j_2 - 1}$, see Remark \ref{Rem: generic arcs of a non-singular HT}). In particular, this implies that $\mathcal{H}(X_2)_{a,\mu(X)}(\lambda_2)\setminus\{0\}$, for $a>0$ small enough, has a single connected component. Hence, arcs satisfying condition $(2)$ do exist. If $\lambda_2$ is any arc in $S_{j_2}$ satisfying $(2)$, then, for any $\theta_{j_1} \in N_{j_1}$, $\operatorname{tord}(\lambda_2,T(\lambda_1, \theta_{j_1})) = \beta$. Thus $X_2=T(\lambda_1, \lambda_2)$ is LNE. Therefore, we proved in both cases that $X_2$ is LNE. The proof that $X_i$, $i>2$, is LNE follows analogously.  
    
    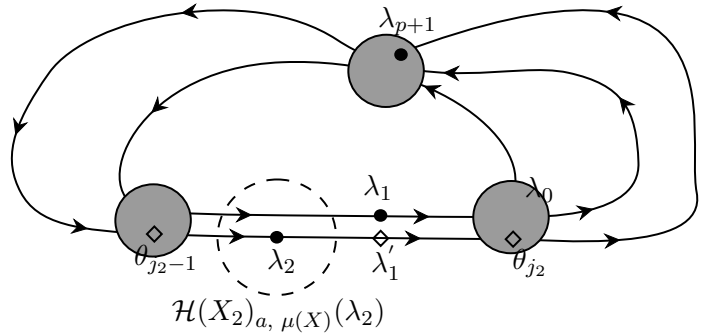
\begin{figure}[h!]
 \centering

\tikzset{every picture/.style={line width=0.75pt}} 

\begin{tikzpicture}[x=0.75pt,y=0.75pt,yscale=-1,xscale=1]

\draw    (431.81,140.09) .. controls (445,114.13) and (395.85,68.69) .. (316.46,71.63) .. controls (237.06,74.56) and (214.07,145.37) .. (252.46,147.21) .. controls (290.86,149.05) and (307.77,148.75) .. (343.42,149.15) .. controls (379.08,149.56) and (414.3,149.75) .. (431.71,149.8) .. controls (449.13,149.86) and (496.9,149.73) .. (495.99,122.4) .. controls (495.09,95.06) and (488.05,83.04) .. (460.62,76.95) .. controls (433.2,70.85) and (387.85,89.1) .. (343.93,60.28) .. controls (300.02,31.47) and (218.93,48.78) .. (198.93,76.78) .. controls (178.93,104.78) and (175.93,121.28) .. (184.43,137.28) .. controls (192.93,153.28) and (212.41,156.79) .. (251.83,158.35) .. controls (291.26,159.91) and (303.94,159.67) .. (337.35,160.06) .. controls (370.77,160.46) and (405.22,160.72) .. (429.03,160.94) .. controls (452.85,161.16) and (531.33,169.28) .. (525.33,137.28) .. controls (519.33,105.28) and (553.33,-0.22) .. (373.33,67.78) ;
\draw [shift={(388.14,83.75)}, rotate = 23.56] [fill={rgb, 255:red, 0; green, 0; blue, 0 }  ][line width=0.08]  [draw opacity=0] (7.14,-3.43) -- (0,0) -- (7.14,3.43) -- (4.74,0) -- cycle    ;
\draw [shift={(250.65,96.86)}, rotate = 317.05] [fill={rgb, 255:red, 0; green, 0; blue, 0 }  ][line width=0.08]  [draw opacity=0] (7.14,-3.43) -- (0,0) -- (7.14,3.43) -- (4.74,0) -- cycle    ;
\draw [shift={(300.59,148.75)}, rotate = 180.87] [fill={rgb, 255:red, 0; green, 0; blue, 0 }  ][line width=0.08]  [draw opacity=0] (7.14,-3.43) -- (0,0) -- (7.14,3.43) -- (4.74,0) -- cycle    ;
\draw [shift={(390.19,149.58)}, rotate = 180.41] [fill={rgb, 255:red, 0; green, 0; blue, 0 }  ][line width=0.08]  [draw opacity=0] (7.14,-3.43) -- (0,0) -- (7.14,3.43) -- (4.74,0) -- cycle    ;
\draw [shift={(472.8,145.83)}, rotate = 165.72] [fill={rgb, 255:red, 0; green, 0; blue, 0 }  ][line width=0.08]  [draw opacity=0] (7.14,-3.43) -- (0,0) -- (7.14,3.43) -- (4.74,0) -- cycle    ;
\draw [shift={(486.56,89.63)}, rotate = 51.11] [fill={rgb, 255:red, 0; green, 0; blue, 0 }  ][line width=0.08]  [draw opacity=0] (7.14,-3.43) -- (0,0) -- (7.14,3.43) -- (4.74,0) -- cycle    ;
\draw [shift={(397.97,76.95)}, rotate = 2.25] [fill={rgb, 255:red, 0; green, 0; blue, 0 }  ][line width=0.08]  [draw opacity=0] (7.14,-3.43) -- (0,0) -- (7.14,3.43) -- (4.74,0) -- cycle    ;
\draw [shift={(265.87,46.76)}, rotate = 353.15] [fill={rgb, 255:red, 0; green, 0; blue, 0 }  ][line width=0.08]  [draw opacity=0] (7.14,-3.43) -- (0,0) -- (7.14,3.43) -- (4.74,0) -- cycle    ;
\draw [shift={(181.94,108)}, rotate = 288.42] [fill={rgb, 255:red, 0; green, 0; blue, 0 }  ][line width=0.08]  [draw opacity=0] (7.14,-3.43) -- (0,0) -- (7.14,3.43) -- (4.74,0) -- cycle    ;
\draw [shift={(217.78,155.61)}, rotate = 189.47] [fill={rgb, 255:red, 0; green, 0; blue, 0 }  ][line width=0.08]  [draw opacity=0] (7.14,-3.43) -- (0,0) -- (7.14,3.43) -- (4.74,0) -- cycle    ;
\draw [shift={(297.21,159.65)}, rotate = 180.9] [fill={rgb, 255:red, 0; green, 0; blue, 0 }  ][line width=0.08]  [draw opacity=0] (7.14,-3.43) -- (0,0) -- (7.14,3.43) -- (4.74,0) -- cycle    ;
\draw [shift={(385.85,160.56)}, rotate = 180.52] [fill={rgb, 255:red, 0; green, 0; blue, 0 }  ][line width=0.08]  [draw opacity=0] (7.14,-3.43) -- (0,0) -- (7.14,3.43) -- (4.74,0) -- cycle    ;
\draw [shift={(486.39,161.36)}, rotate = 175.99] [fill={rgb, 255:red, 0; green, 0; blue, 0 }  ][line width=0.08]  [draw opacity=0] (7.14,-3.43) -- (0,0) -- (7.14,3.43) -- (4.74,0) -- cycle    ;
\draw [shift={(479.78,46.05)}, rotate = 7.84] [fill={rgb, 255:red, 0; green, 0; blue, 0 }  ][line width=0.08]  [draw opacity=0] (7.14,-3.43) -- (0,0) -- (7.14,3.43) -- (4.74,0) -- cycle    ;
\draw  [draw opacity=0][fill={rgb, 255:red, 155; green, 155; blue, 155 }  ,fill opacity=0.47 ] (251.39,133.25) .. controls (261.9,133.28) and (270.33,141.48) .. (270.23,151.55) .. controls (270.12,161.62) and (261.51,169.76) .. (251,169.72) .. controls (240.49,169.69) and (232.06,161.49) .. (232.17,151.42) .. controls (232.28,141.35) and (240.88,133.21) .. (251.39,133.25) -- cycle ;
\draw  [fill={rgb, 255:red, 0; green, 0; blue, 0 }  ,fill opacity=1 ] (428.92,140.09) .. controls (428.92,138.61) and (430.21,137.41) .. (431.81,137.41) .. controls (433.41,137.41) and (434.71,138.61) .. (434.71,140.09) .. controls (434.71,141.57) and (433.41,142.77) .. (431.81,142.77) .. controls (430.21,142.77) and (428.92,141.57) .. (428.92,140.09) -- cycle ;
\draw  [fill={rgb, 255:red, 0; green, 0; blue, 0 }  ,fill opacity=1 ] (363.03,148.98) .. controls (363.03,147.5) and (364.33,146.3) .. (365.93,146.3) .. controls (367.53,146.3) and (368.83,147.5) .. (368.83,148.98) .. controls (368.83,150.46) and (367.53,151.66) .. (365.93,151.66) .. controls (364.33,151.66) and (363.03,150.46) .. (363.03,148.98) -- cycle ;
\draw  [draw opacity=0][fill={rgb, 255:red, 155; green, 155; blue, 155 }  ,fill opacity=0.47 ] (431.9,131.57) .. controls (442.41,131.6) and (450.85,139.8) .. (450.74,149.87) .. controls (450.63,159.94) and (442.03,168.07) .. (431.52,168.04) .. controls (421.01,168) and (412.57,159.81) .. (412.68,149.74) .. controls (412.79,139.67) and (421.39,131.53) .. (431.9,131.57) -- cycle ;
\draw  [draw opacity=0][fill={rgb, 255:red, 155; green, 155; blue, 155 }  ,fill opacity=0.47 ] (368.89,57.75) .. controls (379.4,57.78) and (387.83,65.98) .. (387.73,76.05) .. controls (387.62,86.12) and (379.01,94.26) .. (368.5,94.22) .. controls (357.99,94.19) and (349.56,85.99) .. (349.67,75.92) .. controls (349.78,65.85) and (358.38,57.71) .. (368.89,57.75) -- cycle ;
\draw  [fill={rgb, 255:red, 0; green, 0; blue, 0 }  ,fill opacity=1 ] (310.68,159.98) .. controls (310.68,158.5) and (311.98,157.3) .. (313.58,157.3) .. controls (315.18,157.3) and (316.47,158.5) .. (316.47,159.98) .. controls (316.47,161.46) and (315.18,162.66) .. (313.58,162.66) .. controls (311.98,162.66) and (310.68,161.46) .. (310.68,159.98) -- cycle ;
\draw  [color={rgb, 255:red, 0; green, 0; blue, 0 }  ,draw opacity=1 ][dash pattern={on 4.5pt off 4.5pt}] (313.89,130.81) .. controls (330.39,130.86) and (343.63,143.97) .. (343.46,160.08) .. controls (343.29,176.19) and (329.77,189.2) .. (313.27,189.15) .. controls (296.76,189.09) and (283.52,175.99) .. (283.69,159.88) .. controls (283.87,143.77) and (297.38,130.75) .. (313.89,130.81) -- cycle ;
\draw  [fill={rgb, 255:red, 0; green, 0; blue, 0 }  ,fill opacity=1 ] (373.33,67.78) .. controls (373.33,66.3) and (374.63,65.1) .. (376.23,65.1) .. controls (377.83,65.1) and (379.13,66.3) .. (379.13,67.78) .. controls (379.13,69.27) and (377.83,70.47) .. (376.23,70.47) .. controls (374.63,70.47) and (373.33,69.27) .. (373.33,67.78) -- cycle ;
\draw   (251.83,154.53) -- (255.58,158.35) -- (251.83,162.18) -- (248.08,158.35) -- cycle ;
\draw   (432.78,157.11) -- (436.53,160.94) -- (432.78,164.76) -- (429.03,160.94) -- cycle ;
\draw   (365.98,156.71) -- (369.73,160.54) -- (365.98,164.36) -- (362.23,160.54) -- cycle ;

\draw (307.16,164.24) node [anchor=north west][inner sep=0.75pt]    {$\lambda _{2}$};
\draw (355.73,127.05) node [anchor=north west][inner sep=0.75pt]    {$\lambda _{1}$};
\draw (437.23,128.05) node [anchor=north west][inner sep=0.75pt]    {$\lambda _{0}$};
\draw (363.23,42.05) node [anchor=north west][inner sep=0.75pt]    {$\lambda _{p+1}$};
\draw (259.23,189.53) node [anchor=north west][inner sep=0.75pt]    {$\mathcal{H}( X_{2})_{a,\ \mu ( X)}( \lambda _{2})$};
\draw (240.23,162.55) node [anchor=north west][inner sep=0.75pt]    {$\theta _{j_{2} -1}$};
\draw (431.03,164.34) node [anchor=north west][inner sep=0.75pt]    {$\theta _{j_{2}}$};
\draw (359.73,162.65) node [anchor=north west][inner sep=0.75pt]    {$\lambda _{1}^{'}$};

\end{tikzpicture}

\caption{Proof of Theorem \ref{Teo: minimal pancake decomposition} in the case $\operatorname{tord}(S_{j_2},S_{j_1})>\beta$. Points inside shaded disks represent arcs with the tangency order higher than the snake's exponent. The dashed disk centered in $\lambda_2$ represents the horn neighborhood $\mathcal{H}(X_2)_{a,\mu(X)}(\lambda_2)$ for $a>0$ small enough.}\label{Fig: Condition (2) of Theorem}
\end{figure}

    Finally, let us prove that $\{X_i\}_{i=1}^{p+1}$ is minimal. By the construction of the minimal sequence of $X$, the Valette link of a single pancake cannot contain the pair $N_{j_{i-1}}, N_{j_{i}}$, $i=1,\ldots, p$, since it should be LNE. Hence, any pancake of $X$ must contain at most one $N_{j_{i}}$, and therefore any pancake decomposition of $X$ has at least $p+1$ pancakes.

\end{proof}

\begin{Def}\label{Def: Greedy decomposition in Snakes}
    Any decomposition $\{X_i\}_{i=1}^{p+1}$ of a snake $X$ satisfying the properties of Theorem \ref{Teo: minimal pancake decomposition} is defined as a \textbf{greedy (pancake) decomposition of $X$.}
\end{Def}



\section{Minimal pancake decomposition of circular snakes}

Circular snakes require some adaptations to the previous algorithm we presented to obtain minimal pancake decompositions for snakes. In this section we address those changes and present suitable algorithms for this case.

\subsection{Circular snakes without nodal zones}

Differently from what happens with snakes, there are circular snakes without nodal zones. Let us start adjusting the greedy algorithm for this case. Informally, we choose an arc $\gamma$ of a circular $\beta$-snake $X$ with multiplicity $m$ and consider the $m$ LNE H\"older triangles obtained by intersecting $X$ with a $\beta$-horn neighborhood of $\gamma$, $\mathcal{H}X_{a,\beta}(\gamma)$ for $a>0$ small enough. We face $\mathcal{H}X_{a,\beta}(\gamma)$ as if it was a node (an artificial node) and its LNE H\"older triangles as if they were nodal zones (artificial nodal zones). Then, we apply a similar greedy algorithm as in the previous section to obtain a minimal pancake decomposition.

\begin{Def}\label{Def: Minimal-partition-without-nodal-zones}
    Let $X$ be a circular $\beta$-snake without nodal zones such that $X$ has multiplicity $m>1$, and let $\gamma \in V(X)$ be an arc. For a fixed orientation on the link of $X$, we define a \textbf{minimal partition} of $X$ with base $\gamma$ as the set $\{T_1,\cdots,T_{m}\}$ defined as follows: let $\gamma_1,\dots,\gamma_m, \gamma_{m+1} \in V(X)$ such that
    \begin{itemize}
        \item $\gamma_1=\gamma_{m+1}=\gamma$ and $ \gamma_1,\dots,\gamma_m, \gamma_{m+1}$ are in this order following the orientation on the link of $X$;
        \item $\itord(\gamma_i,\gamma_{i+1})=\beta$ and $\tord(\gamma_i,\gamma_{i+1})>\beta$, for $i=0,1,\dots,m$.
    \end{itemize}
Then, $T_i=T(\gamma_i,\gamma_{i+1})$, for $i=1,\dots,m$, whose orientation of the link of each $T_i$ is induced by the orientation of the link of $X$.
\end{Def}

\begin{remark}
    Since $X$ is a circular $\beta$-snake without nodal zones and with multiplicity $m$, then for $a>0$ small enough, we have that
    $\mathcal{H}X_{a,\beta}(\gamma)$ consists of $m$ LNE H\"older triangles $\tilde T_1, \dots, \tilde T_m$, with $\gamma_1 \in \tilde T_1$. Since this holds for every $a>0$ small, $\tord(\tilde T_i, \tilde T_j)>\beta$, for every $1\le i<j\le m$, and thus we can take $\gamma_2 \in \tilde T_2, \dots,\gamma_m \in \tilde T_m $ satisfying Definition \ref{Def: Minimal-partition-without-nodal-zones}.
\end{remark}

\begin{Teo}\label{Teo: minimal-circular-no-nodes}
    Let $X$ be a circular $\beta$-snake without nodal zones and let $\{T_1,\dots,T_m\}$ be a minimal partition of $X$ with base $\gamma$ (see Definition \ref{Def: Minimal-partition-without-nodal-zones}). Let $\lambda_0=\lambda_{m+1}=\gamma$ and, for $1\le i\ \le m$, let $\lambda_i \in G(T_i)$ such that $\tord(\lambda_i,\lambda_{i-1})=\beta$ and $T(\lambda_{i-1},\lambda_i)$ is LNE. If, for $i=1,\dots,m+1$, $X_i:=T(\lambda_{i-1},\lambda_i)$ (the orientation of the link of each $T_i$ is induced by the orientation of the link of $X$), then $\{X_1, \dots,X_{m+1}\}$ is a minimal pancake decomposition of $X$.
\end{Teo}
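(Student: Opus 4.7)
The plan is to prove Theorem~\ref{Teo: minimal-circular-no-nodes} in two parts: first verify that $\{X_i\}_{i=1}^{m+1}$ is a pancake decomposition of $X$, then establish that it is minimal.

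For the first part, I would verify the defining properties of Definition~\ref{Def: pancake decomposition}. Each $X_i$ is a closed H\"older triangle with connected link. The LNE hypothesis on the $\lambda_i$'s gives LNE-ness of $X_1,\dots,X_m$, while $X_{m+1} = T(\lambda_m,\gamma)$ is a sub-triangle of $T_m$ adjacent to the boundary arc $\gamma$, and is LNE by a choice analogous to the one made in Theorem~\ref{Teo: minimal pancake decomposition}. The equality $X = \bigcup_{i=1}^{m+1} X_i$ follows by stitching: $X_1 \subset T_1$ and $X_{m+1} \subset T_m$, while for $2 \le i \le m$ the triangle $X_i$ covers the tail of $T_{i-1}$ past $\lambda_{i-1}$ together with the head of $T_i$ up to $\lambda_i$. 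The dimensional intersection condition then holds since adjacent pancakes share a single arc ($\lambda_i$, or $\gamma$ for the pair $X_1,X_{m+1}$) and non-adjacent ones meet only at the origin.

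For minimality, I plan to take an arbitrary pancake decomposition $\{Y_1,\dots,Y_k\}$ of $X$ and show $k \ge m+1$. Since $X$ is abnormal it is not LNE, so $k \ge 2$, and the connectedness of $X$ forces at least one arc $\lambda \in V(X)$ to be a shared boundary of two distinct pancakes (a junction arc). At $\lambda$, the set $\mathcal{H}X_{a,\beta}(\lambda) \setminus \{0\}$ decomposes into $m$ components $C_1,\dots,C_m$ with $\lambda \in C_1$, since $X$ has constant multiplicity $m$.

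The crucial step, and the main technical obstacle, will be the claim that \emph{each pancake $Y_j$ intersects at most one of the components $C_i$}. Suppose otherwise: $Y_j$ contains arcs $\mu \in C_i$ and $\mu' \in C_{i'}$ with $i \ne i'$. Then $\tord(\mu,\mu') > \beta$ because both lie within the outer $\beta$-horn of $\lambda$, while the inner tangency order of $\mu,\mu'$ in $X$ equals $\beta$ because any inner path in $X$ joining the two components must exit the horn of $\lambda$, and $X$ has exponent $\beta$. Since $Y_j \subset X$, the inner tangency order computed in $Y_j$ is bounded above by the one in $X$, so $\itord(\mu,\mu') \le \beta < \tord(\mu,\mu')$ in $Y_j$, contradicting the LNE criterion of Remark~\ref{Rem: NE HT condition}. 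Granted this claim, the junction arc $\lambda \in C_1$ lies in at least two pancakes, so $C_1$ is met by at least two distinct pancakes; and each of $C_2,\dots,C_m$ is met by at least one further (pairwise distinct) pancake. The disjointness of these families of pancakes yields $k \ge 2 + (m-1) = m+1$, showing that $\{X_i\}_{i=1}^{m+1}$ is minimal.
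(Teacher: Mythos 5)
Your proposal diverges from the paper's proof in both halves, and each half has a gap.

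For the first half, the paper does not verify the pancake-decomposition axioms directly; it removes a small $\beta'$-Hölder neighbourhood $\tilde T$ of $\gamma$ (with $\beta'>\beta$) to obtain a spiral snake $\tilde X = X\setminus(\tilde T\setminus\{0\})$, and then invokes Theorem~\ref{Teo: minimal pancake decomposition}. That reduction simultaneously shows that arcs $\lambda_1,\dots,\lambda_m$ satisfying the stated conditions \emph{exist} and that \emph{all} of the resulting triangles, including $X_{m+1}$, are LNE. Your version glosses over precisely these two points. You write that $X_{m+1}=T(\lambda_m,\gamma)$ ``is LNE by a choice analogous to the one made in Theorem~\ref{Teo: minimal pancake decomposition},'' but there is no choice to be made: $\lambda_{m+1}=\gamma$ is fixed in the statement, and the LNE-ness of $X_{m+1}$ is \emph{not} among the hypotheses (only $X_1,\dots,X_m$ are assumed LNE). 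It therefore has to be proved, not quoted, and your proposal does not do this. You also do not address why arcs $\lambda_1,\dots,\lambda_m$ with the stated properties exist, which is likewise part of making the theorem non-vacuous.

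For minimality, your organisation (pick a junction arc $\lambda$ of an arbitrary decomposition, look at the $m$ components of its $\beta$-horn, show no pancake can meet two of them) is the same in spirit as the paper's, but the key inequality you use is not justified. You assert that $\mu\in C_i$, $\mu'\in C_{i'}$ with $i\ne i'$ forces $\tord(\mu,\mu')>\beta$ ``because both lie within the outer $\beta$-horn of $\lambda$.'' Membership in $\mathcal{H}X_{a,\beta}(\lambda)$ only gives $\tord(\mu,\lambda)\ge\beta$ and $\tord(\mu',\lambda)\ge\beta$; the non-archimedean property then gives $\tord(\mu,\mu')\ge\beta$, not strict inequality. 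An arc near the boundary of the horn (where $\|\mu(t)-\lambda(t)\|=ct^\beta$ with $c$ close to $a$) has $\tord(\mu,\lambda)=\beta$ exactly, so two such arcs in different components need not have $\tord>\beta$. The paper sidesteps this by picking \emph{specific} representatives $\gamma_1,\dots,\gamma_m$ of the components satisfying $\tord(\gamma_i,\gamma_j)>\beta$ for all $i\ne j$ (possible because the components separate with $\tord>\beta$ for $a$ small enough, as noted in the remark after Definition~\ref{Def: Minimal-partition-without-nodal-zones}), and then argues that no LNE Hölder triangle inside $X$ can contain two of these $\gamma_i$'s. That is the correct claim; your stronger ``no pancake meets two components'' claim either needs these specific representatives or an additional argument about the geometry of the interval of $V(X)$ joining $\mu$ and $\mu'$.
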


\begin{proof}
    Let $\beta'>\beta$ and let $\tilde T$ be a $\beta'$-H\"older triangle such that $\gamma \in G(\tilde T)$. If $\tilde X=X \setminus(\tilde T \setminus \{0\})$, then $\tilde X$ is a spiral snake (see Definition 4.47 in \cite{GabrielovSouza}). By applying Theorem \ref{Teo: minimal pancake decomposition} to $\tilde X$, the arcs $\lambda_1, \dots, \lambda_m$ always exist and hence $\{X_i\}_{i=1}^{m+1}$ is a pancake decomposition of $X$. Therefore, it only remains to prove that $\{X_i\}_{i=1}^{m+1}$ is minimal.
    

    We will prove first that $\{X_i\}_{i=1}^{p+1}$ is a minimal pancake decomposition such that $\gamma$ is a boundary arc of two such pancakes. Since $\tord(\gamma_i, \gamma_j)>\beta$, for every $1\le i<j\le m$, the Valette link of a single pancake cannot contain two of the $m$ arcs $\gamma_1,\dots,\gamma_m$. Since $\gamma_1=\gamma$ is in exactly two pancakes ($X_1$ and $X_{m+1}$), the minimum number of pancakes is $2+(m-1)=m+1$. The value $m+1$ is also a global minimum, because $\gamma \in V(X)$ is arbitrary. Consequently, $\{X_i\}_{i=1}^{p+1}$ is a minimal pancake decomposition of $X$.

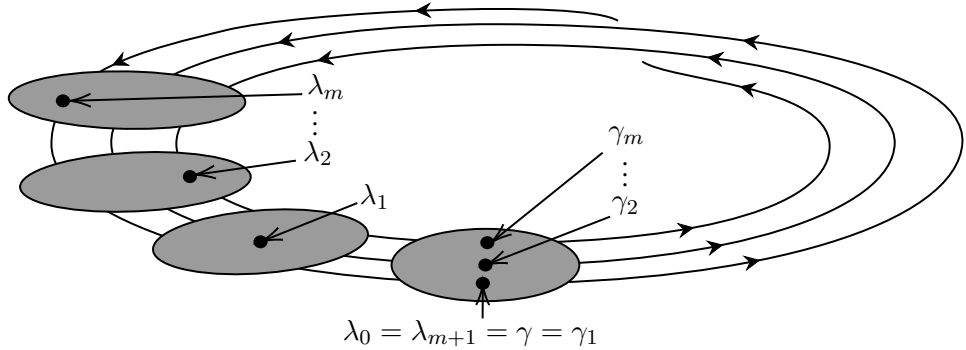
\begin{figure}[h!]
 \centering

\tikzset{every picture/.style={line width=0.75pt}} 

\begin{tikzpicture}[x=0.75pt,y=0.75pt,yscale=-1,xscale=1]

\draw    (414.66,14.73) .. controls (395.79,3.23) and (254.69,10.23) .. (216.95,19.48) .. controls (179.21,28.72) and (129.17,39.22) .. (128.94,76.84) .. controls (128.71,114.46) and (226.07,147.24) .. (356.02,147.24) .. controls (485.97,147.24) and (585.73,117.24) .. (588.36,76.44) .. controls (590.98,35.64) and (468.91,16.84) .. (354.71,16.44) .. controls (240.51,16.04) and (157.81,41.24) .. (159.13,76.84) .. controls (160.44,112.44) and (241.82,138.04) .. (356.02,138.04) .. controls (470.22,138.04) and (552.92,110.84) .. (554.23,76.84) .. controls (555.54,42.84) and (454.47,26.44) .. (357.33,26.84) .. controls (260.2,27.24) and (191.94,46.84) .. (191.94,76.84) .. controls (191.94,106.84) and (261.51,127.64) .. (359.96,126.84) .. controls (458.41,126.04) and (522.73,107.64) .. (521.41,77.24) .. controls (520.1,46.84) and (441.34,45.24) .. (426.9,35.24) ;
\draw [shift={(312.69,9.72)}, rotate = 357.55] [fill={rgb, 255:red, 0; green, 0; blue, 0 }  ][line width=0.08]  [draw opacity=0] (7.14,-3.43) -- (0,0) -- (7.14,3.43) -- (4.74,0) -- cycle    ;
\draw [shift={(159.92,37.02)}, rotate = 335.31] [fill={rgb, 255:red, 0; green, 0; blue, 0 }  ][line width=0.08]  [draw opacity=0] (7.14,-3.43) -- (0,0) -- (7.14,3.43) -- (4.74,0) -- cycle    ;
\draw [shift={(234.34,136.39)}, rotate = 191.33] [fill={rgb, 255:red, 0; green, 0; blue, 0 }  ][line width=0.08]  [draw opacity=0] (7.14,-3.43) -- (0,0) -- (7.14,3.43) -- (4.74,0) -- cycle    ;
\draw [shift={(485.95,135.6)}, rotate = 168.76] [fill={rgb, 255:red, 0; green, 0; blue, 0 }  ][line width=0.08]  [draw opacity=0] (7.14,-3.43) -- (0,0) -- (7.14,3.43) -- (4.74,0) -- cycle    ;
\draw [shift={(477.99,24.62)}, rotate = 8.25] [fill={rgb, 255:red, 0; green, 0; blue, 0 }  ][line width=0.08]  [draw opacity=0] (7.14,-3.43) -- (0,0) -- (7.14,3.43) -- (4.74,0) -- cycle    ;
\draw [shift={(243.55,25.74)}, rotate = 349.07] [fill={rgb, 255:red, 0; green, 0; blue, 0 }  ][line width=0.08]  [draw opacity=0] (7.14,-3.43) -- (0,0) -- (7.14,3.43) -- (4.74,0) -- cycle    ;
\draw [shift={(251.48,129.54)}, rotate = 190.71] [fill={rgb, 255:red, 0; green, 0; blue, 0 }  ][line width=0.08]  [draw opacity=0] (7.14,-3.43) -- (0,0) -- (7.14,3.43) -- (4.74,0) -- cycle    ;
\draw [shift={(467.32,127.9)}, rotate = 168.54] [fill={rgb, 255:red, 0; green, 0; blue, 0 }  ][line width=0.08]  [draw opacity=0] (7.14,-3.43) -- (0,0) -- (7.14,3.43) -- (4.74,0) -- cycle    ;
\draw [shift={(460.1,33.16)}, rotate = 8.16] [fill={rgb, 255:red, 0; green, 0; blue, 0 }  ][line width=0.08]  [draw opacity=0] (7.14,-3.43) -- (0,0) -- (7.14,3.43) -- (4.74,0) -- cycle    ;
\draw [shift={(263.37,34.72)}, rotate = 349.5] [fill={rgb, 255:red, 0; green, 0; blue, 0 }  ][line width=0.08]  [draw opacity=0] (7.14,-3.43) -- (0,0) -- (7.14,3.43) -- (4.74,0) -- cycle    ;
\draw [shift={(271.16,120.65)}, rotate = 189.82] [fill={rgb, 255:red, 0; green, 0; blue, 0 }  ][line width=0.08]  [draw opacity=0] (7.14,-3.43) -- (0,0) -- (7.14,3.43) -- (4.74,0) -- cycle    ;
\draw [shift={(452.59,119.2)}, rotate = 169.74] [fill={rgb, 255:red, 0; green, 0; blue, 0 }  ][line width=0.08]  [draw opacity=0] (7.14,-3.43) -- (0,0) -- (7.14,3.43) -- (4.74,0) -- cycle    ;
\draw [shift={(476.48,47.95)}, rotate = 13.84] [fill={rgb, 255:red, 0; green, 0; blue, 0 }  ][line width=0.08]  [draw opacity=0] (7.14,-3.43) -- (0,0) -- (7.14,3.43) -- (4.74,0) -- cycle    ;
\draw  [draw opacity=0][fill={rgb, 255:red, 155; green, 155; blue, 155 }  ,fill opacity=0.47 ] (300.48,137.98) .. controls (300.48,127.9) and (321.68,119.73) .. (347.83,119.73) .. controls (373.98,119.73) and (395.18,127.9) .. (395.18,137.98) .. controls (395.18,148.06) and (373.98,156.24) .. (347.83,156.24) .. controls (321.68,156.24) and (300.48,148.06) .. (300.48,137.98) -- cycle ;
\draw  [fill={rgb, 255:red, 0; green, 0; blue, 0 }  ,fill opacity=1 ] (343.45,147.24) .. controls (343.45,145.71) and (344.82,144.47) .. (346.51,144.47) .. controls (348.2,144.47) and (349.58,145.71) .. (349.58,147.24) .. controls (349.58,148.77) and (348.2,150.01) .. (346.51,150.01) .. controls (344.82,150.01) and (343.45,148.77) .. (343.45,147.24) -- cycle ;
\draw [color={rgb, 255:red, 0; green, 0; blue, 0 }  ,draw opacity=0.39 ]   (346.4,164.93) -- (346.5,149.24) ;
\draw [shift={(346.51,147.24)}, rotate = 90.37] [color={rgb, 255:red, 0; green, 0; blue, 0 }  ,draw opacity=0.39 ][line width=0.75]    (10.93,-3.29) .. controls (6.95,-1.4) and (3.31,-0.3) .. (0,0) .. controls (3.31,0.3) and (6.95,1.4) .. (10.93,3.29)   ;
\draw [color={rgb, 255:red, 0; green, 0; blue, 0 }  ,draw opacity=0.39 ]   (410.7,113.57) -- (349.69,137.26) ;
\draw [shift={(347.83,137.98)}, rotate = 338.78] [color={rgb, 255:red, 0; green, 0; blue, 0 }  ,draw opacity=0.39 ][line width=0.75]    (10.93,-3.29) .. controls (6.95,-1.4) and (3.31,-0.3) .. (0,0) .. controls (3.31,0.3) and (6.95,1.4) .. (10.93,3.29)   ;
\draw [color={rgb, 255:red, 0; green, 0; blue, 0 }  ,draw opacity=0.39 ]   (406.95,81.29) -- (352.01,125.58) ;
\draw [shift={(350.45,126.84)}, rotate = 321.12] [color={rgb, 255:red, 0; green, 0; blue, 0 }  ,draw opacity=0.39 ][line width=0.75]    (10.93,-3.29) .. controls (6.95,-1.4) and (3.31,-0.3) .. (0,0) .. controls (3.31,0.3) and (6.95,1.4) .. (10.93,3.29)   ;
\draw  [draw opacity=0][fill={rgb, 255:red, 155; green, 155; blue, 155 }  ,fill opacity=0.47 ] (201.51,115.88) .. controls (225.33,108.89) and (259.35,107.87) .. (277.5,113.6) .. controls (295.65,119.34) and (291.06,129.66) .. (267.24,136.66) .. controls (243.43,143.65) and (209.4,144.67) .. (191.25,138.94) .. controls (173.1,133.2) and (177.7,122.88) .. (201.51,115.88) -- cycle ;
\draw  [draw opacity=0][fill={rgb, 255:red, 155; green, 155; blue, 155 }  ,fill opacity=0.47 ] (152.81,82.76) .. controls (183.27,78.82) and (216.23,81.58) .. (226.44,88.92) .. controls (236.65,96.26) and (220.23,105.4) .. (189.78,109.33) .. controls (159.33,113.26) and (126.36,110.5) .. (116.16,103.17) .. controls (105.95,95.83) and (122.36,86.69) .. (152.81,82.76) -- cycle ;
\draw [color={rgb, 255:red, 0; green, 0; blue, 0 }  ,draw opacity=0.39 ]   (283.19,106.71) -- (236.24,125.53) ;
\draw [shift={(234.38,126.27)}, rotate = 338.17] [color={rgb, 255:red, 0; green, 0; blue, 0 }  ,draw opacity=0.39 ][line width=0.75]    (10.93,-3.29) .. controls (6.95,-1.4) and (3.31,-0.3) .. (0,0) .. controls (3.31,0.3) and (6.95,1.4) .. (10.93,3.29)   ;
\draw [color={rgb, 255:red, 0; green, 0; blue, 0 }  ,draw opacity=0.39 ]   (252.25,85.29) -- (202.61,92.28) ;
\draw [shift={(200.63,92.56)}, rotate = 351.98] [color={rgb, 255:red, 0; green, 0; blue, 0 }  ,draw opacity=0.39 ][line width=0.75]    (10.93,-3.29) .. controls (6.95,-1.4) and (3.31,-0.3) .. (0,0) .. controls (3.31,0.3) and (6.95,1.4) .. (10.93,3.29)   ;
\draw  [draw opacity=0][fill={rgb, 255:red, 155; green, 155; blue, 155 }  ,fill opacity=0.47 ] (174.01,40.49) .. controls (206.73,41.97) and (230.14,49.56) .. (226.31,57.44) .. controls (222.47,65.33) and (192.83,70.52) .. (160.11,69.04) .. controls (127.39,67.56) and (103.97,59.97) .. (107.81,52.08) .. controls (111.65,44.2) and (141.29,39.01) .. (174.01,40.49) -- cycle ;
\draw [color={rgb, 255:red, 0; green, 0; blue, 0 }  ,draw opacity=0.39 ]   (255.45,51.75) -- (139.81,55.07) ;
\draw [shift={(137.81,55.13)}, rotate = 358.36] [color={rgb, 255:red, 0; green, 0; blue, 0 }  ,draw opacity=0.39 ][line width=0.75]    (10.93,-3.29) .. controls (6.95,-1.4) and (3.31,-0.3) .. (0,0) .. controls (3.31,0.3) and (6.95,1.4) .. (10.93,3.29)   ;
\draw  [fill={rgb, 255:red, 0; green, 0; blue, 0 }  ,fill opacity=1 ] (344.77,137.98) .. controls (344.77,136.45) and (346.14,135.21) .. (347.83,135.21) .. controls (349.52,135.21) and (350.89,136.45) .. (350.89,137.98) .. controls (350.89,139.51) and (349.52,140.76) .. (347.83,140.76) .. controls (346.14,140.76) and (344.77,139.51) .. (344.77,137.98) -- cycle ;
\draw  [fill={rgb, 255:red, 0; green, 0; blue, 0 }  ,fill opacity=1 ] (345.72,126.71) .. controls (345.72,125.18) and (347.09,123.93) .. (348.78,123.93) .. controls (350.47,123.93) and (351.84,125.18) .. (351.84,126.71) .. controls (351.84,128.24) and (350.47,129.48) .. (348.78,129.48) .. controls (347.09,129.48) and (345.72,128.24) .. (345.72,126.71) -- cycle ;
\draw  [fill={rgb, 255:red, 0; green, 0; blue, 0 }  ,fill opacity=1 ] (231.32,126.27) .. controls (231.32,124.74) and (232.69,123.5) .. (234.38,123.5) .. controls (236.07,123.5) and (237.44,124.74) .. (237.44,126.27) .. controls (237.44,127.8) and (236.07,129.04) .. (234.38,129.04) .. controls (232.69,129.04) and (231.32,127.8) .. (231.32,126.27) -- cycle ;
\draw  [fill={rgb, 255:red, 0; green, 0; blue, 0 }  ,fill opacity=1 ] (195.72,93.37) .. controls (195.72,91.84) and (197.09,90.6) .. (198.78,90.6) .. controls (200.47,90.6) and (201.84,91.84) .. (201.84,93.37) .. controls (201.84,94.9) and (200.47,96.14) .. (198.78,96.14) .. controls (197.09,96.14) and (195.72,94.9) .. (195.72,93.37) -- cycle ;
\draw  [fill={rgb, 255:red, 0; green, 0; blue, 0 }  ,fill opacity=1 ] (131.68,55.13) .. controls (131.68,53.6) and (133.05,52.36) .. (134.74,52.36) .. controls (136.44,52.36) and (137.81,53.6) .. (137.81,55.13) .. controls (137.81,56.66) and (136.44,57.9) .. (134.74,57.9) .. controls (133.05,57.9) and (131.68,56.66) .. (131.68,55.13) -- cycle ;

\draw (274.36,164.07) node [anchor=north west][inner sep=0.75pt]    {$\lambda _{0} =\lambda _{m+1} =\gamma =\gamma _{1}$};
\draw (283,95.74) node [anchor=north west][inner sep=0.75pt]    {$\lambda _{1}$};
\draw (410.13,102.1) node [anchor=north west][inner sep=0.75pt]    {$\gamma _{2}$};
\draw (408.38,66.78) node [anchor=north west][inner sep=0.75pt]    {$\gamma _{m}$};
\draw (254.4,73.6) node [anchor=north west][inner sep=0.75pt]    {$\lambda _{2}$};
\draw (414.95,76.69) node [anchor=north west][inner sep=0.75pt]    {$\vdots $};
\draw (258.66,50.87) node [anchor=north west][inner sep=0.75pt]    {$\vdots $};
\draw (256.34,40.27) node [anchor=north west][inner sep=0.75pt]    {$\lambda _{m}$};

\end{tikzpicture}

\caption{Proof of Theorem \ref{Teo: minimal-circular-no-nodes}. Shaded disks represent the horn neighborhoods.}\label{6}
\end{figure}
\end{proof}

\begin{Def}\label{Def: Greedy decomposition in circ Snakes no nodes}
Any decomposition $\{X_i\}_{i=1}^{m+1}$ of a snake $X$ without nodal zones satisfying the properties of Theorem \ref{Teo: minimal-circular-no-nodes} is defined as a \textbf{greedy (pancake) decomposition of $X$ with base $\gamma$}.
\end{Def}

\subsection{Circular snakes with nodal zones}
When the circular snake has nodal zones, the greedy algorithm applied for snakes needs several changes. One of the main difficulties is that we do not necessarily have a periodic behavior when moving circularly along the link starting from a given nodal zone and accordingly to a given orientation. This impose the necessity of defining the notion of fundamental sequence beyond the notion of minimal sequence previously established (see Definition \ref{Def: minimal sequence-circular}). To make things worse, the periodic part of such a sequence may give ``more than one lap'' on the link of the circular snake (see Example \ref{exemplo2-circular}). To overcome this problem we proved Lemma \ref{Lem: fundamental-minimal} and in Theorem \ref{Teo:Minimal-pancake-circular-snakes-with-nodes} we construct a new circular snake $\tilde X$, which can be seen as a ``lifting'' of $X$ with $t$ ``fibers'', where $t$ is the number of laps that the fundamental sequence gives on the circular snake. Since $\tilde X$ has the property that the minimal and fundamental sequences coincide, the proof works similarly to the one in snakes' case.

\begin{Def}\label{Def: minimal sequence-circular}
    Let $X$ be a circular $\beta$-snake with given orientation $\varepsilon$ and nodal zone $N$. Let $\{N_i\}_{i=1}^{m}$ and $\{S_i\}_{i=1}^m$ be the decomposition of the Valette link of $X$ into nodal zones and segments, respectively, with $N_1=N$. Suppose that the nodal zones were enumerated according to the orientation $\varepsilon$ and $N_{i-1}$, $N_{i}$ are the nodal zones adjacent to $S_i$, for each $i \in \mathbb{Z}$, where the indices are taken modulo $m$ (see Theorem 3.23 in \cite{circular-snakes}). Consider, for each $i=1,\ldots,m$, arcs $\theta_i \in N_i$. We define the infinite sequence $\{j_1, j_2, \cdots\}$ recursively as follows: $j_1=1$ and, for each $i>1$, $j_i$ is the minimum integer greater than $j_{i-1}$ such that $T(\theta_{j_{i-1}},\theta_{j_i})$ is not LNE. Here, $T(\theta_{j_{i-1}},\theta_{j_i})$ have the orientation induced from $\varepsilon$ and $\theta_{k+m}=\theta_{k}$, for every $k\ge 1$.
    
    For each $i\ge 1$, let $\tilde j_i \in \{1,\dots,m\}$ be the only integer such that $\tilde j_i \equiv j_i \pmod{m}$. Since $j_{i}$ depends uniquely on $j_{i-1}$, the sequence $\{\tilde j_1, \tilde j_2, \cdots \}$ is eventually $p$-periodic, where $p>1$ is the fundamental period of such sequence. If $k$ is the minimum integer such that $\{\tilde j_{k+1},\cdots, \tilde j_{k+p}\}$ is the fundamental periodic block of the sequence $\{\tilde j_{1},\tilde j_{2},\cdots\}$, we define $\{\tilde j_{k+1},\cdots, \tilde j_{k+p}\}$ as the \textbf{fundamental sequence of $X$ with respect to $(N,\varepsilon)$}. If $q$ is the minimum integer such that $j_{k+q+1}\ge j_{k+1}+m$, the sequence $\{\tilde j_{k+1},\cdots, \tilde j_{k+q}\}$ is defined as the \textbf{minimal sequence of $X$ with respect to $(N,\varepsilon)$}.
\end{Def}

\begin{remark}
    Since circular snakes with at least one nodal zone have at least 2 nodes (see Corollary 3.35 in \cite{circular-snakes}), the minimal sequence $\{\tilde j_{k+1},\cdots, \tilde j_{k+q}\}$ always satisfies $q> 1$. It does not depend on the choice of the arcs $\theta_i$, but only on the nodal zones $N_i$ and thus only on the orientation of $X$ and the initial nodal zone $N_1$.
\end{remark}

We going to use the term circular snake name throughout this subsection, and for a more detailed treatment we refer the reader to \cite{circular-snakes}.
Remember that a word $W$ is \textbf{primitive} if it contains no repeated letters.

\begin{remark}\label{Rem: minimal-from-circular-name}
    As a direct consequence of Definition \ref{Def: minimal sequence-circular}, one can alternatively obtain the sequence $\{j_1, j_2, \cdots\}$ of a circular snake $X$ from its circular snake name $W=[x_1x_2\cdots x_mx_1]$ (with $x_1$ representing the node containing the nodal zone $N_1=N$, see Remark \ref{Rem:words}) as follows: consider the infinite word 
    $$\mathcal{W}=[x_1x_2\cdots x_mx_1x_2\cdots x_mx_1x_2\cdots x_m\cdots]=[y_1y_2\dots],$$
    define $j_1=1$ and, for each $i>1$, $j_i$ is the minimum integer $\ell$ greater than $j_{i-1}$ such that the subword $[y_{j_{i-1}}\cdots y_{\ell}]$ of $\mathcal{W}$ is not primitive. Recall that a word is \textbf{primitive} if it contains no repeated letters.
\end{remark} 

\begin{Exam}\label{exemplo1-circular}
    Consider the circular $\beta$-snake $X$ whose link is represented by Figures \ref{7}a, \ref{7}b and \ref{7}c. The only difference between those figures is the choice of the respective initial nodal zones $N_a, N_b, N_c$ (whose arc $\theta_1$ is indicated by $\star$) among the $m=15$ nodal zones and the respective orientations $\varepsilon_a,\varepsilon_b,\varepsilon_c$ on the link. Notice that $\varepsilon_a=\varepsilon_b=-\varepsilon_c$, $N_b=N_c$ and the corresponding arcs $\theta_{j_i}$ are indicated on each figure.

    In Figure \ref{7}a, the sequence $\{j_1,j_2,\cdots\}$ is $\{1,5,10, 13, 17, 20,25,28,32,35,\cdots\}$, and thus the fundamental sequence (and also the minimal sequence) of $X$ with respect to $(N_a,\varepsilon_a)$ is $\{5,10,13,2\}$. This example shows that $\{\tilde j_1,\tilde j_2,\cdots\}$ can be non periodic.

    In Figure \ref{7}b, the sequence $\{j_1,j_2,\cdots\}$ is $\{1,4,8, 12, 16, 19,23,27,31,\cdots\}$, and thus the fundamental sequence (and also the minimal sequence) of $X$ with respect to $(N_b,\varepsilon_b)$ is $\{1,4,8,12\}$. This example shows that $\{\tilde j_1,\tilde j_2,\cdots\}$ can be periodic. By comparing Figures \ref{7}a and \ref{7}b, we conclude that the fundamental and minimal sequences depend on the initial nodal zone $N$.

    In Figure \ref{7}c, the sequence $\{j_1,j_2,\cdots\}$ is $\{1,4,7, 12, 16, 19,22,27,31,\cdots\}$, and thus the fundamental sequence (and also the minimal sequence) of $X$ with respect to $(N_c,\varepsilon_c)$ is $\{1,4,7,12\}$. By comparing Figures \ref{7}b and \ref{7}c, we conclude that the fundamental and minimal sequences depend on the orientation $\varepsilon$, even when the nodal zone $N$ is the same.

\begin{figure}[h!]
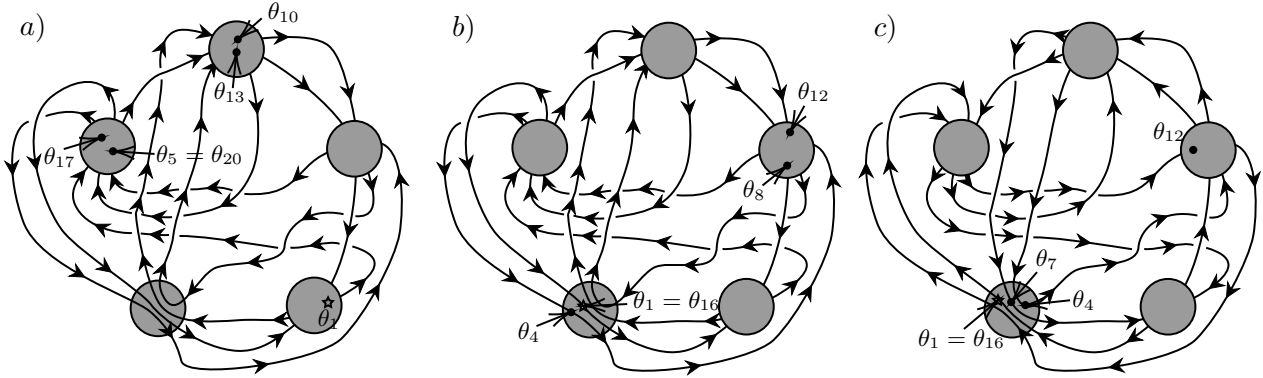

 \centering

\tikzset{every picture/.style={line width=0.75pt}} 


\caption{A circular $\beta$-snake with different initial nodal zones and orientations. Points inside shaded disks
represent arcs with the tangency order higher than the respective surface's exponent.}\label{7}
\end{figure}
\end{Exam}

\begin{Exam}\label{exemplo2-circular}
    In all three cases of Example \ref{exemplo1-circular}, the fundamental sequence is equal to the minimal sequence, but this is not true in general. For a counterexample, consider $X$ as the circular snake whose initial nodal zone $N \ni \theta_1$ and orientation $\varepsilon$ of its link are given as in Figure \ref{8}. The sequence $\{j_1,j_2,\cdots\}$ is $$\{1,4,8, 12, 16, 20,24,28,32,36,40,44,\cdots\},$$ and since $X$ has $m=10$ nodal zones, the fundamental sequence  of $X$ with respect to $(N,\varepsilon)$ is $\{4,8,2,6,10\}$. However, the minimal sequence of $X$ with respect to $(N,\varepsilon)$ is $\{4,8,2\}$.

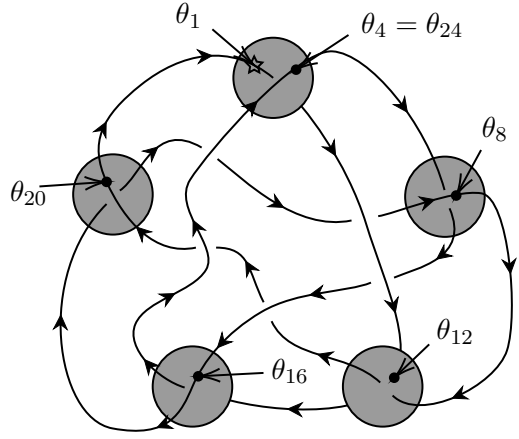
\begin{figure}[h!]
 \centering

\tikzset{every picture/.style={line width=0.75pt}} 

\begin{tikzpicture}[x=0.75pt,y=0.75pt,yscale=-1,xscale=1]

\draw    (360.03,78.27) .. controls (370.21,87.07) and (391.86,115.61) .. (401.57,147.14) .. controls (411.28,178.68) and (429.61,214.39) .. (410.95,230.46) .. controls (392.29,246.53) and (341.51,241.36) .. (318.25,231.42) ;
\draw [shift={(386.15,112.61)}, rotate = 239.48] [fill={rgb, 255:red, 0; green, 0; blue, 0 }  ][line width=0.08]  [draw opacity=0] (7.14,-3.43) -- (0,0) -- (7.14,3.43) -- (4.74,0) -- cycle    ;
\draw [shift={(416.24,191.9)}, rotate = 254.98] [fill={rgb, 255:red, 0; green, 0; blue, 0 }  ][line width=0.08]  [draw opacity=0] (7.14,-3.43) -- (0,0) -- (7.14,3.43) -- (4.74,0) -- cycle    ;
\draw [shift={(362.01,240.5)}, rotate = 2.69] [fill={rgb, 255:red, 0; green, 0; blue, 0 }  ][line width=0.08]  [draw opacity=0] (7.14,-3.43) -- (0,0) -- (7.14,3.43) -- (4.74,0) -- cycle    ;
\draw  [draw opacity=0][fill={rgb, 255:red, 155; green, 155; blue, 155 }  ,fill opacity=0.47 ] (334.6,73.37) .. controls (334.6,62.18) and (343.58,53.11) .. (354.66,53.11) .. controls (365.74,53.11) and (374.73,62.18) .. (374.73,73.37) .. controls (374.73,84.55) and (365.74,93.62) .. (354.66,93.62) .. controls (343.58,93.62) and (334.6,84.55) .. (334.6,73.37) -- cycle ;
\draw   (345.09,62.52) -- (346.37,65.19) -- (349.23,65.62) -- (347.16,67.7) -- (347.65,70.64) -- (345.09,69.25) -- (342.53,70.64) -- (343.02,67.7) -- (340.95,65.62) -- (343.81,65.19) -- cycle ;
\draw [color={rgb, 255:red, 0; green, 0; blue, 0 }  ,draw opacity=0.39 ]   (323.99,48.85) -- (343.57,65.7) ;
\draw [shift={(345.09,67.01)}, rotate = 220.72] [color={rgb, 255:red, 0; green, 0; blue, 0 }  ,draw opacity=0.39 ][line width=0.75]    (10.93,-3.29) .. controls (6.95,-1.4) and (3.31,-0.3) .. (0,0) .. controls (3.31,0.3) and (6.95,1.4) .. (10.93,3.29)   ;
\draw  [draw opacity=0][fill={rgb, 255:red, 155; green, 155; blue, 155 }  ,fill opacity=0.47 ] (253.79,132.22) .. controls (253.79,121.03) and (262.77,111.97) .. (273.85,111.97) .. controls (284.93,111.97) and (293.91,121.03) .. (293.91,132.22) .. controls (293.91,143.41) and (284.93,152.48) .. (273.85,152.48) .. controls (262.77,152.48) and (253.79,143.41) .. (253.79,132.22) -- cycle ;
\draw  [draw opacity=0][fill={rgb, 255:red, 155; green, 155; blue, 155 }  ,fill opacity=0.47 ] (421.22,133.37) .. controls (421.22,122.19) and (430.21,113.12) .. (441.29,113.12) .. controls (452.37,113.12) and (461.35,122.19) .. (461.35,133.37) .. controls (461.35,144.56) and (452.37,153.63) .. (441.29,153.63) .. controls (430.21,153.63) and (421.22,144.56) .. (421.22,133.37) -- cycle ;
\draw  [draw opacity=0][fill={rgb, 255:red, 155; green, 155; blue, 155 }  ,fill opacity=0.47 ] (293.9,229.16) .. controls (293.9,217.97) and (302.88,208.9) .. (313.96,208.9) .. controls (325.05,208.9) and (334.03,217.97) .. (334.03,229.16) .. controls (334.03,240.34) and (325.05,249.41) .. (313.96,249.41) .. controls (302.88,249.41) and (293.9,240.34) .. (293.9,229.16) -- cycle ;
\draw  [draw opacity=0][fill={rgb, 255:red, 155; green, 155; blue, 155 }  ,fill opacity=0.47 ] (389.83,229.16) .. controls (389.83,217.97) and (398.81,208.9) .. (409.89,208.9) .. controls (420.97,208.9) and (429.96,217.97) .. (429.96,229.16) .. controls (429.96,240.34) and (420.97,249.41) .. (409.89,249.41) .. controls (398.81,249.41) and (389.83,240.34) .. (389.83,229.16) -- cycle ;
\draw    (310.02,229.68) .. controls (297.09,225.83) and (279.89,209.96) .. (285.18,197.64) .. controls (290.46,185.31) and (313.88,181.77) .. (320.79,170.23) .. controls (327.69,158.69) and (303.55,139.46) .. (312.79,124.79) .. controls (322.04,110.13) and (366.31,58.94) .. (384.54,59.99) .. controls (402.78,61.04) and (431.58,95.4) .. (441.1,130.01) ;
\draw [shift={(289.7,215.57)}, rotate = 48.92] [fill={rgb, 255:red, 0; green, 0; blue, 0 }  ][line width=0.08]  [draw opacity=0] (7.14,-3.43) -- (0,0) -- (7.14,3.43) -- (4.74,0) -- cycle    ;
\draw [shift={(305.03,182.25)}, rotate = 152.36] [fill={rgb, 255:red, 0; green, 0; blue, 0 }  ][line width=0.08]  [draw opacity=0] (7.14,-3.43) -- (0,0) -- (7.14,3.43) -- (4.74,0) -- cycle    ;
\draw [shift={(314.26,145.27)}, rotate = 64.39] [fill={rgb, 255:red, 0; green, 0; blue, 0 }  ][line width=0.08]  [draw opacity=0] (7.14,-3.43) -- (0,0) -- (7.14,3.43) -- (4.74,0) -- cycle    ;
\draw [shift={(346.61,85.2)}, rotate = 135.07] [fill={rgb, 255:red, 0; green, 0; blue, 0 }  ][line width=0.08]  [draw opacity=0] (7.14,-3.43) -- (0,0) -- (7.14,3.43) -- (4.74,0) -- cycle    ;
\draw [shift={(422.33,90.71)}, rotate = 234.63] [fill={rgb, 255:red, 0; green, 0; blue, 0 }  ][line width=0.08]  [draw opacity=0] (7.14,-3.43) -- (0,0) -- (7.14,3.43) -- (4.74,0) -- cycle    ;
\draw    (443.61,139.22) .. controls (454.51,164.46) and (426.53,169.51) .. (415.63,174.2) ;
\draw [shift={(437.16,164.86)}, rotate = 322.86] [fill={rgb, 255:red, 0; green, 0; blue, 0 }  ][line width=0.08]  [draw opacity=0] (7.14,-3.43) -- (0,0) -- (7.14,3.43) -- (4.74,0) -- cycle    ;
\draw    (404,176.72) .. controls (391.64,181.77) and (367.03,180.78) .. (348.77,191.51) .. controls (330.51,202.24) and (319.49,216.32) .. (313.96,229.16) .. controls (308.43,241.99) and (302.26,255.7) .. (273.55,250.29) .. controls (244.85,244.88) and (234.31,173.12) .. (270.64,137.05) ;
\draw [shift={(373.15,183)}, rotate = 349.12] [fill={rgb, 255:red, 0; green, 0; blue, 0 }  ][line width=0.08]  [draw opacity=0] (7.14,-3.43) -- (0,0) -- (7.14,3.43) -- (4.74,0) -- cycle    ;
\draw [shift={(326.59,209.54)}, rotate = 312.53] [fill={rgb, 255:red, 0; green, 0; blue, 0 }  ][line width=0.08]  [draw opacity=0] (7.14,-3.43) -- (0,0) -- (7.14,3.43) -- (4.74,0) -- cycle    ;
\draw [shift={(296.12,249.88)}, rotate = 337.41] [fill={rgb, 255:red, 0; green, 0; blue, 0 }  ][line width=0.08]  [draw opacity=0] (7.14,-3.43) -- (0,0) -- (7.14,3.43) -- (4.74,0) -- cycle    ;
\draw [shift={(247.57,193.1)}, rotate = 92.01] [fill={rgb, 255:red, 0; green, 0; blue, 0 }  ][line width=0.08]  [draw opacity=0] (7.14,-3.43) -- (0,0) -- (7.14,3.43) -- (4.74,0) -- cycle    ;
\draw    (277.33,130.25) .. controls (288.41,122.85) and (294.63,94.5) .. (318.61,108.56) ;
\draw [shift={(296.51,108.29)}, rotate = 139.25] [fill={rgb, 255:red, 0; green, 0; blue, 0 }  ][line width=0.08]  [draw opacity=0] (7.14,-3.43) -- (0,0) -- (7.14,3.43) -- (4.74,0) -- cycle    ;
\draw    (326.6,114.33) .. controls (339.9,124.74) and (366.21,151.12) .. (394.19,145.35) ;
\draw [shift={(359.92,138.79)}, rotate = 208.89] [fill={rgb, 255:red, 0; green, 0; blue, 0 }  ][line width=0.08]  [draw opacity=0] (7.14,-3.43) -- (0,0) -- (7.14,3.43) -- (4.74,0) -- cycle    ;
\draw    (407.63,143.18) .. controls (415.08,142.28) and (445.42,129.48) .. (459.59,131.64) .. controls (473.77,133.81) and (478.42,192.88) .. (472.6,210.91) .. controls (466.79,228.94) and (429,247.69) .. (414.9,233.7) ;
\draw [shift={(436.08,134.85)}, rotate = 164.36] [fill={rgb, 255:red, 0; green, 0; blue, 0 }  ][line width=0.08]  [draw opacity=0] (7.14,-3.43) -- (0,0) -- (7.14,3.43) -- (4.74,0) -- cycle    ;
\draw [shift={(474.33,172.14)}, rotate = 264.45] [fill={rgb, 255:red, 0; green, 0; blue, 0 }  ][line width=0.08]  [draw opacity=0] (7.14,-3.43) -- (0,0) -- (7.14,3.43) -- (4.74,0) -- cycle    ;
\draw [shift={(445.06,235.06)}, rotate = 336.85] [fill={rgb, 255:red, 0; green, 0; blue, 0 }  ][line width=0.08]  [draw opacity=0] (7.14,-3.43) -- (0,0) -- (7.14,3.43) -- (4.74,0) -- cycle    ;
\draw    (408.65,227.14) .. controls (391.57,211.27) and (363.3,216.39) .. (353.86,196.19) ;
\draw [shift={(377.15,212.9)}, rotate = 17.25] [fill={rgb, 255:red, 0; green, 0; blue, 0 }  ][line width=0.08]  [draw opacity=0] (7.14,-3.43) -- (0,0) -- (7.14,3.43) -- (4.74,0) -- cycle    ;
\draw    (349.86,186.64) .. controls (341.86,170.77) and (343.32,159.05) .. (325.88,159.05) ;
\draw [shift={(340.8,165.89)}, rotate = 60.88] [fill={rgb, 255:red, 0; green, 0; blue, 0 }  ][line width=0.08]  [draw opacity=0] (7.14,-3.43) -- (0,0) -- (7.14,3.43) -- (4.74,0) -- cycle    ;
\draw    (315.7,159.05) .. controls (294.99,163.38) and (279.73,145.71) .. (270.64,125.87) .. controls (261.56,106.04) and (268.1,92.7) .. (288.09,76.83) .. controls (308.07,60.96) and (333.04,55.34) .. (354.66,73.37) ;
\draw [shift={(285.73,148.68)}, rotate = 43.38] [fill={rgb, 255:red, 0; green, 0; blue, 0 }  ][line width=0.08]  [draw opacity=0] (7.14,-3.43) -- (0,0) -- (7.14,3.43) -- (4.74,0) -- cycle    ;
\draw [shift={(270.43,95.69)}, rotate = 116.67] [fill={rgb, 255:red, 0; green, 0; blue, 0 }  ][line width=0.08]  [draw opacity=0] (7.14,-3.43) -- (0,0) -- (7.14,3.43) -- (4.74,0) -- cycle    ;
\draw [shift={(323.85,62.18)}, rotate = 176.25] [fill={rgb, 255:red, 0; green, 0; blue, 0 }  ][line width=0.08]  [draw opacity=0] (7.14,-3.43) -- (0,0) -- (7.14,3.43) -- (4.74,0) -- cycle    ;
\draw  [fill={rgb, 255:red, 0; green, 0; blue, 0 }  ,fill opacity=1 ] (364.04,69.23) .. controls (364.04,68.03) and (365.02,67.07) .. (366.22,67.07) .. controls (367.43,67.07) and (368.4,68.03) .. (368.4,69.23) .. controls (368.4,70.42) and (367.43,71.39) .. (366.22,71.39) .. controls (365.02,71.39) and (364.04,70.42) .. (364.04,69.23) -- cycle ;
\draw  [fill={rgb, 255:red, 0; green, 0; blue, 0 }  ,fill opacity=1 ] (444.78,132.77) .. controls (444.78,131.57) and (445.76,130.6) .. (446.96,130.6) .. controls (448.17,130.6) and (449.15,131.57) .. (449.15,132.77) .. controls (449.15,133.96) and (448.17,134.93) .. (446.96,134.93) .. controls (445.76,134.93) and (444.78,133.96) .. (444.78,132.77) -- cycle ;
\draw  [fill={rgb, 255:red, 0; green, 0; blue, 0 }  ,fill opacity=1 ] (413.54,224.73) .. controls (413.54,223.53) and (414.51,222.56) .. (415.72,222.56) .. controls (416.92,222.56) and (417.9,223.53) .. (417.9,224.73) .. controls (417.9,225.92) and (416.92,226.89) .. (415.72,226.89) .. controls (414.51,226.89) and (413.54,225.92) .. (413.54,224.73) -- cycle ;
\draw  [fill={rgb, 255:red, 0; green, 0; blue, 0 }  ,fill opacity=1 ] (314.7,224.01) .. controls (314.7,222.81) and (315.68,221.84) .. (316.88,221.84) .. controls (318.08,221.84) and (319.06,222.81) .. (319.06,224.01) .. controls (319.06,225.2) and (318.08,226.17) .. (316.88,226.17) .. controls (315.68,226.17) and (314.7,225.2) .. (314.7,224.01) -- cycle ;
\draw  [fill={rgb, 255:red, 0; green, 0; blue, 0 }  ,fill opacity=1 ] (268.46,125.87) .. controls (268.46,124.68) and (269.44,123.71) .. (270.64,123.71) .. controls (271.85,123.71) and (272.82,124.68) .. (272.82,125.87) .. controls (272.82,127.07) and (271.85,128.04) .. (270.64,128.04) .. controls (269.44,128.04) and (268.46,127.07) .. (268.46,125.87) -- cycle ;
\draw [color={rgb, 255:red, 0; green, 0; blue, 0 }  ,draw opacity=0.39 ]   (392.66,51.73) -- (367.89,68.13) ;
\draw [shift={(366.22,69.23)}, rotate = 326.5] [color={rgb, 255:red, 0; green, 0; blue, 0 }  ,draw opacity=0.39 ][line width=0.75]    (10.93,-3.29) .. controls (6.95,-1.4) and (3.31,-0.3) .. (0,0) .. controls (3.31,0.3) and (6.95,1.4) .. (10.93,3.29)   ;
\draw [color={rgb, 255:red, 0; green, 0; blue, 0 }  ,draw opacity=0.39 ]   (459.52,109.79) -- (447.92,131.01) ;
\draw [shift={(446.96,132.77)}, rotate = 298.65] [color={rgb, 255:red, 0; green, 0; blue, 0 }  ,draw opacity=0.39 ][line width=0.75]    (10.93,-3.29) .. controls (6.95,-1.4) and (3.31,-0.3) .. (0,0) .. controls (3.31,0.3) and (6.95,1.4) .. (10.93,3.29)   ;
\draw [color={rgb, 255:red, 0; green, 0; blue, 0 }  ,draw opacity=0.39 ]   (433.36,207.88) -- (417.16,223.35) ;
\draw [shift={(415.72,224.73)}, rotate = 316.32] [color={rgb, 255:red, 0; green, 0; blue, 0 }  ,draw opacity=0.39 ][line width=0.75]    (10.93,-3.29) .. controls (6.95,-1.4) and (3.31,-0.3) .. (0,0) .. controls (3.31,0.3) and (6.95,1.4) .. (10.93,3.29)   ;
\draw [color={rgb, 255:red, 0; green, 0; blue, 0 }  ,draw opacity=0.39 ]   (349.79,222.3) -- (318.88,223.9) ;
\draw [shift={(316.88,224.01)}, rotate = 357.04] [color={rgb, 255:red, 0; green, 0; blue, 0 }  ,draw opacity=0.39 ][line width=0.75]    (10.93,-3.29) .. controls (6.95,-1.4) and (3.31,-0.3) .. (0,0) .. controls (3.31,0.3) and (6.95,1.4) .. (10.93,3.29)   ;
\draw [color={rgb, 255:red, 0; green, 0; blue, 0 }  ,draw opacity=0.39 ]   (236.78,128.18) -- (268.65,126.01) ;
\draw [shift={(270.64,125.87)}, rotate = 176.1] [color={rgb, 255:red, 0; green, 0; blue, 0 }  ,draw opacity=0.39 ][line width=0.75]    (10.93,-3.29) .. controls (6.95,-1.4) and (3.31,-0.3) .. (0,0) .. controls (3.31,0.3) and (6.95,1.4) .. (10.93,3.29)   ;

\draw (303.98,34.08) node [anchor=north west][inner sep=0.75pt]  [font=\normalsize]  {$\theta _{1}$};
\draw (397.61,39.09) node [anchor=north west][inner sep=0.75pt]  [font=\normalsize]  {$\theta _{4} =\theta _{24}$};
\draw (458.41,91.59) node [anchor=north west][inner sep=0.75pt]  [font=\normalsize]  {$\theta _{8}$};
\draw (435.61,193.6) node [anchor=north west][inner sep=0.75pt]  [font=\normalsize]  {$\theta _{12}$};
\draw (352.3,213.67) node [anchor=north west][inner sep=0.75pt]  [font=\normalsize]  {$\theta _{16}$};
\draw (220.49,123.89) node [anchor=north west][inner sep=0.75pt]  [font=\normalsize]  {$\theta _{20}$};

\end{tikzpicture}

\caption{A circular $\beta$-snake whose fundamental sequence differs from its minimal sequence. Points inside shaded disks
represent arcs with the tangency order higher than the surface's exponent.}\label{8}
\end{figure}
\end{Exam}

In the rest of this subsection, let $X$ be a $\beta$-snake with an orientation $\varepsilon$, $N=N_1$ be a initial nodal zone. Let $p$ and $q$ be the respective lengths of the fundamental and minimal sequences of $X$, and let $\{N_i\}_{i=1}^{m}$, $\{S_i\}_{i=1}^m$ be the decomposition of $V(X)$ into nodal zones and segments, respectively, as in Definition \ref{Def: minimal sequence-circular}.

\begin{Lem}\label{Lem: fundamental-minimal}
    Let $t\in \mathbb{Z}_{>0}$ such that $j_{k+p+1}=j_{k+1}+tm$. Then, $p\ge t(q-1)+1$.
\end{Lem}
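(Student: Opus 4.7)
The plan is to decompose the fundamental period into $t$ ``laps'' and show that each lap (after the first) uses at least $q-1$ indices of the sequence $\{j_i\}$. Concretely, for $\ell=0,1,\ldots,t$, let $q_\ell$ be the minimum integer with $j_{k+q_\ell+1}\ge j_{k+1}+\ell m$; then $q_0=0$, $q_1=q$ and $q_t=p$ (the last equality uses $j_{k+p+1}=j_{k+1}+tm$ together with the strict monotonicity of $\{j_i\}$, which forces $j_{k+p}<j_{k+1}+tm$). Setting $r_\ell:=q_\ell-q_{\ell-1}$, we have $r_1=q$ and $\sum_{\ell=1}^{t}r_\ell=p$, so the lemma will follow from the claim that $r_\ell\ge q-1$ for every $\ell\in\{2,\ldots,t\}$: summing gives $p\ge q+(t-1)(q-1)=t(q-1)+1$.

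To prove that claim, I would exploit the periodicity of $\{\tilde j_i\}_{i\ge k+1}$ (with minimum period $p$, forcing $\tilde j_{k+1},\ldots,\tilde j_{k+p}$ to be pairwise distinct) together with the interpretation of the $j$-sequence via primitive subwords of the circular snake name (Remark \ref{Rem: minimal-from-circular-name}). Writing $j_{k+q_{\ell-1}+1}=j_{k+1}+(\ell-1)m+\Delta_{\ell-1}$ with the overshoot $\Delta_{\ell-1}\in[0,m)$ (and $\Delta_0=\Delta_t=0$), the first $r_\ell-1$ steps of lap $\ell$ keep the advance strictly below $m-\Delta_{\ell-1}$. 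The step function $\tilde j\mapsto d(\tilde j):=j_{s+1}-j_s$ when $\tilde j_s=\tilde j$ satisfies the Lipschitz-type bound $d(\tilde j+1)\ge d(\tilde j)-1$, which follows directly from the primitive-prefix interpretation: shifting the window by one position can shorten its primitive length by at most one (either the dropped letter is the one that caused the repeat, in which case the window even grows, or the repeat persists and the length decreases by exactly one). Iterating this estimate, the ``local'' number of $j$-steps needed to advance by at least $m$ from any starting position within the fundamental period is bounded below by $q-1$.

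The main obstacle I anticipate is that laps with $\Delta_\ell<\Delta_{\ell-1}$ advance by strictly less than $m$, so the minimality built into the definition of $q$ cannot be applied lap-by-lap in isolation. To overcome this, I would exploit the telescoping identity $\sum_{\ell=1}^{t}(\Delta_\ell-\Delta_{\ell-1})=\Delta_t-\Delta_0=0$, which forces every deficit in one lap to be compensated by a surplus elsewhere; concretely, whenever $\Delta_\ell<\Delta_{\ell-1}$, I would group lap $\ell$ with an adjacent lap to form a ``super-lap'' whose total advance is at least $m$, so that the $q$-lower-bound can be applied to the combined block and then redistributed. Once $r_\ell\ge q-1$ is established for all $\ell\ge 2$, the inequality $p\ge t(q-1)+1$ is immediate from the decomposition $p=r_1+\sum_{\ell=2}^{t}r_\ell$.
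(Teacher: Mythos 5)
Your lap decomposition is correct ($q_0=0$, $q_1=q$, $q_t=p$ all check out), and your Lipschitz-type estimate on the advance function is both true and strong enough to close the argument, once it is restated properly: define $a(\nu)$ for \emph{every} position $\nu$ of the infinite word $\mathcal{W}$ as the smallest $d\ge 1$ with $[y_\nu\cdots y_{\nu+d}]$ not primitive (your $d(\tilde j)$ is only defined at the finitely many residues that actually occur, so $d(\tilde j+1)$ need not make sense); then $a(\nu+1)\ge a(\nu)-1$ says exactly that the landing map $L(\nu):=\nu+a(\nu)$ is non-decreasing, and $m$-periodicity of the word gives $L(\nu+m)=L(\nu)+m$. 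The genuine gap is what comes next. You anticipate trouble in laps with $\Delta_\ell<\Delta_{\ell-1}$, assert that ``the minimality built into the definition of $q$ cannot be applied lap-by-lap,'' and offer a grouping/redistribution patch that you never carry out and that is not clearly workable (a monotone run $\Delta_{\ell-1}>\Delta_\ell>\Delta_{\ell+1}>\cdots$ would force grouping an unbounded number of laps, and it is unclear how a lower bound for a super-lap redistributes to its pieces). But the obstacle you are patching does not exist: $r_\ell\ge q-1$ holds unconditionally. Put $f_r:=j_{k+r}+(\ell-1)m$, so $L(f_r)=f_{r+1}$, and $\nu_i:=j_{k+q_{\ell-1}+1+i}$. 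Minimality of $q_{\ell-1}$ gives $j_{k+q_{\ell-1}}<f_1$, so monotonicity gives $\nu_0=L(j_{k+q_{\ell-1}})\le L(f_1)=f_2$, and inductively $\nu_i\le f_{i+2}$. Hence $\nu_{q-2}\le f_q<f_1+m=j_{k+1}+\ell m$ (the last strict inequality is $j_{k+q}<j_{k+1}+m$, i.e.\ the minimality of $q$, and the paper guarantees $q\ge 2$), which forces $q_\ell\ge q_{\ell-1}+q-1$, i.e.\ $r_\ell\ge q-1$. No telescoping and no grouping are needed; you do, however, still need to address the equality cases (an index of the actual sequence coinciding with a shifted reference index), which the paper rules out using the definition of the fundamental period.

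For comparison, the paper's proof establishes the same key monotonicity fact geometrically rather than combinatorially: if some $j_{k+\tilde l+1}$ jumped past the whole interval $(j_{k+r}+sm,\,j_{k+r+1}+sm)$, then $T(\theta_{j_{k+\tilde l}},\theta_{j_{k+r+1}+sm})$ would already contain the non-LNE triangle $T(\theta_{j_{k+r}},\theta_{j_{k+r+1}})$ and hence fail to be LNE, contradicting the greedy minimality of $j_{k+\tilde l+1}$. It then counts one such intermediate index for each pair $(r,s)$, observes the blocks $[x_s+1,y_s]$ are disjoint, and concludes $p\ge q+(t-1)(q-1)$. This is the same ``a window containing a bad subwindow is bad'' principle that underlies your Lipschitz bound, phrased directly in terms of the surface rather than the snake name; this keeps the argument self-contained, since your version leans on the combinatorial reformulation of the $j$-sequence, which the paper states without proof in Remark~\ref{Rem: minimal-from-circular-name}.
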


\begin{proof}
    For each $1\le s\le t-1$, let $x_s$ be the minimum integer such that $j_{k+q}+(s-1)m+1\le j_{k+x_s+1}$ and let $y_s$ be the maximum integer such that $j_{k+y_s}\le j_{k+q}+sm$. Informally, we are looking at the indices $j_i$ that appear on the $s^{\rm th}$ lap on the link of $X$ (which contains $m$ of such indices, one for each nodal zone), starting from $j_{k+q}$ and following the orientation $\varepsilon$. Such indices are precisely $j_{k+x_s+1},\dots,j_{k+y_s}$. 
    
    The key observation is that, for every $1\le r \le q-1$, there is at least one index $l\in[x_s+1,y_s]$ such that $j_{k+r}+sm < j_{k+l}<j_{k+r+1}+sm$. If that was not the case, then by discrete continuity there is $\tilde l$ such that $j_{k+\tilde l}<j_{k+r}+sm <j_{k+r+1}+sm<j_{k+\tilde l+1}$ (we cannot have equality on each inequality, because otherwise the fundamental sequence of $X$ would have length lesser than $p$). However, this contradicts the minimality of $j_{k+\tilde l+1}$ with respect to $j_{k+\tilde l}$, since $T(\theta_{j_{k+\tilde l}},\theta_{j_{k+r+1}+sm})$ is not LNE, because $$T(\theta_{j_{k+r}},\theta_{j_{k+r+1}})=T(\theta_{j_{k+r}+sm},\theta_{j_{k+r+1}+sm})\subset T(\theta_{j_{k+\tilde l}},\theta_{j_{k+r+1}+sm})$$ and $T(\theta_{j_{k+r}},\theta_{j_{k+r+1}})$ is not LNE by construction. Therefore, the fundamental sequence $\{j_{k+1},\cdots,j_{k+p}\}$ has at least $q-1$ terms $j_{k+l}$ with $l\in[x_s+1,y_s]$. Since all intervals $[x_1+1,y_1], \dots, [x_{t-1}+1,y_{t-1}]$ are disjoint, we have at least $(t-1)(q-1)$ indices $j_{k+l}$ with $l>q$ in $\{j_{k+1},\cdots,j_{k+p}\}$. Counting with $j_{k+1},\dots,j_{k+q}$, we have $p\ge (t-1)(q-1)+q=t(q-1)+1$ and the lemma follows.
\end{proof}

\begin{Teo}\label{Teo:Minimal-pancake-circular-snakes-with-nodes}
Let $X$ be a circular $\beta$-snake with nodal zones, and let $\{\tilde j_{k+1}, \dots, \tilde j_{k+q}\}$ be its minimal sequence with respect to $(N,\varepsilon)$, for some nodal zone $N$ and orientation $\varepsilon$. Consider a cyclic sequence of arcs $\{\lambda_i\}_{i=1}^{q} \subset V(X)$ such that:
\begin{enumerate}
    \item For each $1 \leq i \leq q$, we have $\lambda_i \in S_{j_{k+i}}$;
    \item The arcs $\lambda_i$ are ordered so that the H\"older triangles $X_i = T(\lambda_i, \lambda_{i+1}) \subset X$, with $\lambda_{q+1} := \lambda_0$ have the orientation induced from $X$;
    \item For $1\le i\le q$, $\mathcal{H}(X_i)_{a,\mu(X)}(\lambda_{i+1})\setminus\{0\}$ consists in only one connected component, for every $a>0$ small enough.
\end{enumerate}
Then, the collection $\{X_i = T(\lambda_i, \lambda_{i+1})\}_{i=1}^{q}$ is a minimal pancake decomposition of $X$.
\end{Teo}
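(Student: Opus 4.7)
The plan is to adapt the proof of Theorem~\ref{Teo: minimal pancake decomposition} by first showing each $X_i$ is LNE, then proving the collection covers $X$ as a pancake decomposition, and finally establishing minimality via a lifting construction that reduces to the snake case. The first step proceeds exactly as in Theorem~\ref{Teo: minimal pancake decomposition}: Lemmas~\ref{Lem: tord segments and nodal zones} and~\ref{Lem: segments gr beta implies adj nodal z gr beta} (noted to hold also for circular snakes with nodal zones) imply that if $\tord(S_{j_{k+i}}, S_{j_{k+i+1}}) = \beta$, then any $\lambda_{i+1} \in S_{j_{k+i+1}}$ makes $X_i$ LNE; while if $\tord(S_{j_{k+i}}, S_{j_{k+i+1}}) > \beta$, condition (3) forces the choice so that $\tord(\lambda_{i+1}, T(\lambda_i, \theta_{j_{k+i+1}-1})) = \beta$, which again makes $X_i$ LNE. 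Existence of such an arc follows from Remark~\ref{Rem: generic arcs of a non-singular HT}, picking a generic arc of a sub-triangle close enough to a boundary arc.

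For coverage, since $\{\tilde j_{k+1},\dots,\tilde j_{k+q}\}$ is the minimal sequence, the segments $S_{j_{k+1}},\dots,S_{j_{k+q}}$ traverse the link of $X$ exactly once in the orientation $\varepsilon$, and the cyclic union $\bigcup_{i=1}^{q} T(\lambda_i, \lambda_{i+1})$ equals $X$. The intersections $X_i \cap X_{i+1} = \lambda_{i+1}$ are arcs, so the dimension condition of Definition~\ref{Def: pancake decomposition} holds, yielding a pancake decomposition of $X$ with $q$ pancakes.

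The main obstacle is minimality, because of the discrepancy between the fundamental sequence (length $p$) and the minimal sequence (length $q$), already visible in Example~\ref{exemplo2-circular}. My plan is to introduce a covering construction: build a circular $\beta$-snake $\tilde X$ that is a $t$-to-$1$ unramified cover of $X$, where $t$ is the integer in Lemma~\ref{Lem: fundamental-minimal} satisfying $j_{k+p+1} = j_{k+1} + tm$. The snake $\tilde X$ has $tm$ nodal zones, and with the lifted orientation starting at any lift $\tilde N$ of $N$, its minimal sequence coincides with its fundamental sequence, which has length $p$. Applying the snake-case argument (the construction inside Theorem~\ref{Teo: minimal pancake decomposition}, adapted to the circular setting without the need for further wrap-around) to $\tilde X$ produces a minimal pancake decomposition with exactly $p+1$ pancakes.

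Finally, any pancake decomposition of $X$ with $r$ pancakes pulls back to a pancake decomposition of $\tilde X$ with $tr$ pancakes (each pancake lifts to $t$ disjoint copies up to the covering). Hence $tr \ge p+1$, so $r \ge (p+1)/t$. By Lemma~\ref{Lem: fundamental-minimal}, $p \ge t(q-1)+1$, which gives $(p+1)/t \ge q - 1 + 2/t > q-1$, forcing $r \ge q$. Thus $\{X_i\}_{i=1}^q$ realizes the minimum, completing the proof. The delicate points will be (i) constructing the cover $\tilde X$ explicitly as a circular snake realizing the appropriate lifted circular snake name (guaranteed by the realization theorems of Remark~\ref{Rem:words}), and (ii) justifying that the pancake count is multiplicative under the covering, which follows because each pancake of $X$ has connected link and therefore lifts to $t$ disjoint homeomorphic copies inside $\tilde X$.
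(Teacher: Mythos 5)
Your overall architecture matches the paper's: same LNE argument for each $X_i$, same auxiliary circular snake $\tilde X$ with name $[[w\cdots wx_1]]$ ($t$ copies of $w$), and minimality extracted from Lemma \ref{Lem: fundamental-minimal} by a counting argument on $\tilde X$. The genuine gap is in how you transfer pancake decompositions from $X$ to $\tilde X$. You treat $\tilde X$ as a ``$t$-to-$1$ unramified cover of $X$'' and claim that any pancake decomposition of $X$ with $r$ pancakes \emph{pulls back} to one of $\tilde X$ with $tr$ pancakes because each pancake lifts to $t$ disjoint homeomorphic copies. But no covering map $\tilde X\to X$ exists in this setup: $\tilde X$ is only obtained abstractly from the realization theorem (Theorem 7.10 in \cite{circular-snakes}) as \emph{some} circular $\beta$-snake with the prescribed circular snake name, embedded in its own ambient space. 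Even if one built a topological or inner-Lipschitz local covering of germs, the pullback step fails to be automatic precisely where it matters: being LNE is a statement about the \emph{outer} (ambient) metric of $\tilde X$, which is not controlled by a map that is only locally (inner) bi-Lipschitz, so a homeomorphic lift of an LNE pancake need not be LNE. The paper sidesteps this by going in the opposite direction and combinatorially: given a hypothetical decomposition $\{X_i'\}_{i=1}^{q-1}$ of $X$ with boundary arcs in zones $W_{s(i)}$, it \emph{chooses new arcs} $\tilde\lambda_i'$ in the corresponding (shifted by multiples of $m$) segments or nodal zones of $\tilde X$ and takes the H\"older triangles between consecutive ones, using the correspondence of nodes, nodal zones, segments and clusters encoded in the repeated name to see that the induced pieces are again pancakes; this yields $p\le t(q-1)$, contradicting Lemma \ref{Lem: fundamental-minimal}. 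To repair your argument you would either have to construct a cover with outer-metric compatibility on each sheet (which is not available) or replace the pullback by this combinatorial transfer.

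A smaller slip: you assert that the minimal pancake decomposition of $\tilde X$ has exactly $p+1$ pancakes, imported from the (non-circular) snake case. Since $\tilde X$ is circular and its fundamental and minimal sequences coincide with length $p$, the correct count is $p$ (this is exactly the paper's first case, where the lower bound comes from the fact that no single pancake can meet two consecutive nodal zones $N_{j_{k+i}},N_{j_{k+i+1}}$). Your final inequality chain survives this correction, since $tr\ge p\ge t(q-1)+1$ already forces $r\ge q$, but the statement as written is false and the $p+1$ bound you use is not what the circular argument gives.
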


\begin{proof}
Note initially that, by the same argument in the proof of Theorem \ref{Teo: minimal pancake decomposition}, the arcs $\lambda_1,\dots,\lambda_q$ satisfying all three conditions indeed exist. The proof that each $X_i$ is LNE follows analogously from the respective proof in Theorem \ref{Teo: minimal pancake decomposition}, so it remains to prove that $\{X_i\}_{i=1}^{q}$ is a minimal pancake decomposition of $X$.


We look initially at the case where $p=q$ i.e., the fundamental and minimal sequences coincide. Note that by the construction of the minimal sequence of $X$, the Valette link of a single pancake cannot intersect both nodal zones $N_{j_{k+i}}$ and $N_{j_{k+i+1}}$, for any $i = 1, \ldots, p$, since this would contradict normal embedding. Hence, each pancake can intersect at most one of the nodal zones $N_{k+j_i}$, and any pancake decomposition of $X$ must have at least $p$ pancakes. This shows that $\{X_i\}_{i=1}^{q}$ is a minimal pancake decomposition of the circular snake $X$ in this case, as $p=q$.

Consider now the case $q<p$ and let $t\in \mathbb{Z}_{>0}$ as in Lemma \ref{Lem: fundamental-minimal}. Let also $[[x_1x_2\dots x_mx_1]]$ be the circular snake name of $X$, with $x_1$ corresponding to the node containing $N=N_1$, and define $w=x_1x_2\dots x_m$. Consider $\tilde{X}$ as a circular $\beta$-snake whose circular snake name is $[[w\cdots wx_1]]$, where we concatenated $t$ copies of $w$ (such circular snake always exists, by Theorem 7.10 in \cite{circular-snakes}). Let $\{\tilde N_i\}_{i=1}^{tm}$ and $\{\tilde S_i\}_{i=1}^{tm}$ be the decomposition of the Valette link of $\tilde X$ into nodal zones and segments, respectively. Suppose that $\tilde N_{i-1}$, $\tilde N_{i}$ are the nodal zones adjacent to $\tilde S_i$, for each $i \in \mathbb{Z}$, where the indices are taken modulo $tm$. By the way we constructed its circular snake name (see Remark \ref{Rem: minimal-from-circular-name}), the fundamental and minimal sequences of $\tilde X$ coincide, and the fundamental sequence of $\tilde X$ is the same of $X$. Therefore, by the previous case, any minimal pancake decomposition of $\tilde X$ has $p$ elements. 

Suppose now that $X$ has a pancake decomposition with less than $q$ elements. Let $\{X_i'\}_{i=1}^{q-1}$ be such decomposition (one can subdivide any pancake to obtain exactly $q-1$ of them, if necessary), and suppose that $X_i'=T(\lambda_i',\lambda_{i+1}')$ have the orientation induced from $X$ (consider $\lambda_{q}'=\lambda_1'$). Suppose also that, for $i=1,\dots,q-1$, we have $\lambda_i' \in W_{s(i)}$, where $W_{s(i)}$ is either a segment $S_{s(i)}$ or a nodal zone $N_{s(i)}$ of $X$ (note that $1\le s(i)\le m)$. Now, for $i=1,\dots,t(q-1)$ with $i=r(i)+\ell(q-1)$ ($r(i)$ is the remainder of $i$ modulo $q-1$), define $\tilde{W}_i$ as either the nodal zone $\tilde{N}_{s(r(i))+\ell m}$ of $\tilde X$, if $W_{s(r(i))}$ is a nodal zone of $X$, or the segment $\tilde{S}_{s(r(i))+\ell m}$ of $\tilde X$, if $W_{r(i)}$ is a segment of $X$. Consider in $\tilde X$ the H\"older triangles $\tilde{X}_i'=T(\tilde{\lambda}_i',\tilde{\lambda}_{i+1}')$, where $\tilde{\lambda}_i' \in \tilde{W}_i$, for $1\le i\le t(q-1)$, and $\tilde{\lambda}_{t(q-1)+1}'=\tilde{\lambda}_1'$ (the orientation of $\tilde{X}_i'$ is induced from the orientation of $\tilde X$). Since $\{X'_i\}_{i=1}^{q-1}$ is a pancake decomposition of $X$, its induced decomposition $\{\tilde{X}_i'\}_{i=1}^{t(q-1)}$ on $\tilde X$ is also a pancake decomposition. This implies $p\le t(q-1)$, a contradiction with Lemma \ref{Lem: fundamental-minimal}. The result then follows.
\end{proof}

\begin{Def}\label{Def: Greedy decomposition in circ Snakes with nodes}
Any decomposition $\{X_i\}_{i=1}^{q}$ of a circular snake $X$ with nodal zones satisfying the properties of Theorem \ref{Teo:Minimal-pancake-circular-snakes-with-nodes} is defined as a \textbf{greedy (pancake) decomposition of $X$ with respect to $(N,\varepsilon)$}.
\end{Def}


\section{Weakly Outer Equivalence of Minimal Decompositions}\label{Sec: Weakly canonicity}

This section is dedicated to the canonicity of the greedy pancake decompositions seen in the previous sections. We prove that for a fixed orientation (and also a fixed nodal zone, in the case of circular snakes) any two greedy pancake decompositions of two weakly outer bi-Lipschitz snakes (or circular snakes) are also weakly bi-Lipschitz equivalent in the sense of Definition \ref{Def: equiv of pancake decomp}. Furthermore, we prove that the hypothesis on the orientation (and on the nodal zone for circular snakes)  are necessary, as shown in  Examples \ref{Exam1} and \ref{Exam2}.

\begin{Prop}\label{Prop: canonical-snakes}
    Let $X=T(\gamma_1,\gamma_2)$ and $X'=T(\gamma_1',\gamma_2')$ be $\beta$-snakes and let $h: X\to X'$ be a weakly outer bi-Lipschitz map such that $h(\gamma_1)=\gamma_1'$ and $h(\gamma_2)=\gamma_2'$. Suppose that $\{X_i\}_{i=1}^{p+1}$, $\{X_j'\}_{j=1}^{p'+1}$ are two greedy decompositions of $X$ and $X'$, respectively, such that $X_i=T(\lambda_{i-1},\lambda_i)$, for all $i=1,\dots,p+1$, and $X_j'=T(\lambda'_{j-1},\lambda_j')$, for all $j=1,\dots,p'+1$, with $\lambda_0=\gamma_1$ and $\lambda_0'=\gamma_1'$ (see Definition \ref{Def: Greedy decomposition in Snakes}). Then, $p=p'$ and $\{X_i\}_{i=1}^{p+1}$ and $\{X_i'\}_{i=1}^{p+1}$ are weakly outer bi-Lipschitz equivalent.
\end{Prop}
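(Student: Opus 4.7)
The plan splits naturally into two parts: showing $p = p'$, and then producing the equivalence $\tilde h$.

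For the first part, I would exploit the fact that, because $h$ is weakly outer bi-Lipschitz and sends $\gamma_i$ to $\gamma_i'$, it induces an orientation-preserving bijection between the maximal abnormal and normal zones of $X$ and $X'$. In particular nodal zones, nodes and segments correspond canonically (see Remark \ref{max zones are unique} and Theorem \ref{Teo:weak equivalence}), so $X$ and $X'$ share the same snake name $W$ with respect to the orientation from $\gamma_1$ to $\gamma_2$. As noted right after Definition \ref{Def: minimal sequence}, the minimal sequence $\{j_0,\dots,j_p\}$ is determined purely by $W$, so $p = p'$ and the segments $S_{j_i}$ correspond to $S'_{j_i}$ under the bijection induced by $h$.

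For the second part, I would build $\tilde h: X \to X'$ pancake by pancake. Each $X_i = T(\lambda_{i-1},\lambda_i)$ is, by construction, a LNE $\beta$-H\"older triangle, and likewise for $X_i'$. By Remark \ref{Rem: NE HT condition} the inner and outer metrics agree on each pancake, so producing a bi-Lipschitz homeomorphism $\tilde h_i: X_i \to X_i'$ with $\tilde h_i(\lambda_{i-1}) = \lambda'_{i-1}$ and $\tilde h_i(\lambda_i) = \lambda'_i$ reduces to the standard classification of LNE H\"older triangles of matching exponent. I would additionally demand that $\tilde h_i$ sends each nodal zone $N_k \subset X_i$ to its counterpart $N'_k \subset X_i'$, using the correspondence already set up by $h$; this is achievable by prescribing the images of representative arcs in the order dictated by the orientation of the pancake. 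Gluing along the shared boundary arcs $\lambda_i$ (with matched parametrizations) yields a globally defined map $\tilde h$, which is piecewise, and hence inner, bi-Lipschitz, and plainly satisfies $\tilde h(X_i) = X_i'$ for every $i$.

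To conclude, I would verify the condition $\tord(\gamma,\gamma') > \beta \iff \tord(\tilde h(\gamma), \tilde h(\gamma')) > \beta$. For $\gamma, \gamma'$ in a common pancake, both sides reduce to belonging to the same nodal zone, which is preserved by bi-Lipschitzness. For arcs in distinct pancakes, having $\tord > \beta$ is equivalent to the nodal zones containing them lying in a common node, a property read off of $W$ and hence preserved once the nodal-zone correspondence is matched. The main obstacle is exactly this alignment step: making sure the piecewise choices $\tilde h_i$ are compatible with the global node structure rather than producing an arbitrary matching on each pancake. I expect this to be tractable because the orientation of $X$ restricts coherently to each $X_i$ and fixes the internal order of nodal zones there, so requiring $\tilde h_i$ to reproduce the nodal-zone bijection induced by $h$ introduces no internal conflict.
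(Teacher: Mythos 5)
Your first half (the equality $p=p'$) is essentially the paper's argument: the weak equivalence $h$, together with Theorem \ref{Teo:weak equivalence}, gives matching snake names and a correspondence of nodes, nodal zones and segments compatible with the orientations, and the minimal sequence is determined by this data, so the $\lambda_i$ and $\lambda_i'$ sit in corresponding segments and $p=p'$. The gap is in the second half. You propose to choose, on each greedy pancake $X_i$, an essentially arbitrary bi-Lipschitz homeomorphism $\tilde h_i\colon X_i\to X_i'$ (via the inner classification of LNE H\"older triangles, normalized only by matching boundary arcs and nodal zones), glue, and then verify the weak outer condition. The verification as written is incorrect: for arcs $\gamma,\gamma'$ lying in distinct pancakes, $\operatorname{tord}(\gamma,\gamma')>\beta$ is \emph{not} equivalent to ``the nodal zones containing them lie in a common node''. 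Such arcs are typically segment arcs, and two arcs in segments belonging to the same cluster (Definition \ref{Def: clusters}) have tangency order $>\beta$ even though they lie in no nodal zone at all; moreover the cluster partition is additional data not encoded in the snake name $W$ --- this is exactly why condition (ii) appears in Theorem \ref{Teo:weak equivalence}. (Similarly, inside a single LNE pancake the relation $\operatorname{tord}>\beta$ does not reduce to membership in a common nodal zone; it is preserved there only because $\operatorname{tord}=\itord$ on LNE sets and inner bi-Lipschitz maps preserve $\itord$.)

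Because of this, matching boundary arcs and nodal zones pancake by pancake does not force the glued map to send pairs of arcs at tangency $>\beta$ lying in \emph{different} pancakes (e.g.\ in two segments of one cluster, whose arcs are paired at order $>\beta$) to pairs with the same property: the independent choices of $\tilde h_i$ on the two pancakes can destroy this pairing while still respecting all the constraints you impose. This is precisely the alignment problem you flag, but the orientation/order of nodal zones does not resolve it, since it concerns the fine structure of the map along segments, not the combinatorial order of zones. The paper avoids the issue by not constructing the map from scratch: it reuses the construction of Gabrielov--Souza (Proposition 4.56 and the proof of Theorem 6.28 in \cite{GabrielovSouza}), whose pancakes correspond to segments and whose piecewise homeomorphisms are already built compatibly with the node and cluster data, and only subdivides the pancakes containing the arcs $\lambda_j$ so that $\lambda_j\mapsto\lambda_j'$ (with a separate treatment for bubbles and spiral snakes). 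To repair your argument you would either have to invoke that construction as the paper does, or prove an analogue of it, i.e.\ show that the $\tilde h_i$ can be chosen so that the cluster pairings across pancakes are respected; as it stands this step is missing.
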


\begin{proof}
     It follows from Theorem 6.28 in \cite{GabrielovSouza} (See Theorem \ref{Teo:weak equivalence}) that, with the orientations from $\gamma_1$ to $\gamma_2$ for $X$, and from $\gamma'_1$ to $\gamma'_2$ for $X'$, we have a one-to-one correspondence between the nodes, nodal zones, segments and the clusters of cluster partitions of $X$ and $X'$, respectively. Therefore, by the construction in the definition of the minimal sequences of $X$ and $X'$, we necessarily obtain the boundary arcs $\lambda_i$ and $\lambda'_i$ in corresponding segments of $X$ and $X'$, respectively, with respect to $h$. Hence, $p=p'$. 

    Now, we can slightly adapt the proof of Gabrielov and Souza for Theorem 6.28 in \cite{GabrielovSouza} to construct a weakly outer bi-Lipschitz homeomorphism $\tilde h \colon X \to X'$ such that $\tilde h(X_i) = X'_i$, for $i=1,\ldots, p+1$. Let us assume first that $X$ and $X'$ are not bubbles or spiral snakes. Consider the boundary arcs of $X$ and $X'$ and choose one arc in each interior nodal zone of $X$ and $X'$, respectively, and consider pancake decompositions for $X$ and $X'$ such that those arcs are the boundary arcs of the pancakes. Enumerate the chosen arcs accordingly to the orientations which provide the correspondence between the nodes, nodal zones, segments and the clusters in the cluster partitions of $X$ and $X'$. If $\theta_0,\theta_1,\dots,\theta_n$ are the arcs in $X$ and $\theta_0',\theta_1',\dots,\theta_n'$ are the arcs in $X'$ following such enumeration (with $\theta_0=\gamma_1$, $\theta_n=\gamma_2$; $\theta_0'=\gamma_1'$, $\theta_n'=\gamma_2$ and $\theta_i'$ is on the same nodal zone corresponding to the nodal zone containing $\theta_i$), let us denote by $\{P_i=T(\theta_{i-1},\theta_i)\}_{i=1}^{n}$ and $\{P_i'=T(\theta_{i-1}',\theta_i')\}_{i=1}^{n}$ the mentioned pancake decompositions for $X$ and $X'$, respectively. It follows from Proposition 4.56 in \cite{GabrielovSouza} that each segment $X$ (resp., $X'$) correspond to one of the pancakes of $P_i$ (resp., $P'_i$). Finally, they provide a weakly outer bi-Lipschitz homeomorphism for each pair of correspondent pancakes $P_i$ and $P'_i$. The desired weakly outer equivalence between $X$ and $X'$ comes from the gluing of those homeomorphisms for each pair of corresponding pancakes.
    
    Since we already know that the boundary arcs of $\{X_i\}_{i=1}^{p+1}$ and $\{X_i'\}_{i=1}^{p+1}$ are in corresponding segments, the adaptation we need is the following. We keep their construction for $\tilde h$ in the cases where $P_i$ and $P'_i$ do not contain boundary arcs of $\{X_i\}_{i=1}^{p+1}$ and $\{X_i'\}_{i=1}^{p+1}$, respectively, as interior arcs. We shall address the cases where some $\lambda_j \in I(P_{i_j})$ and $\lambda'_j \in I(P'_{i_j})$. Assuming that $P_{i_j} = T(\theta_{i_j - 1}, \theta_{i_j})$ and $P'_{i_j} = T(\theta'_{i_j - 1}, \theta'_{i_j})$, we define weakly outer bi-Lipschitz homeomorphisms from $T(\theta_{i_j - 1}, \lambda_i)$ to $T(\theta'_{i_j - 1}, \lambda'_i)$ and from $T(\lambda_i, \theta_{i_j})$ to $T(\lambda'_i, \theta'_{i_j})$, mapping $\lambda_i$ to $\lambda'_i$. 
    Finally, we define $\tilde h \colon P_i \to P'_i$ as the natural gluing of those two homeomorphisms. This now provides the the desired weakly outer homeomorphism $\tilde h\colon X \to X'$ such that $\tilde h(X_i) = X'_i$, for $i=1,\ldots, p+1$. Thus, $\{X_i\}_{i=1}^{p+1}$ and $\{X_i'\}_{i=1}^{p+1}$ are weakly outer bi-Lipschitz equivalent. 
    
    The case where $X$ and $X'$ are bubbles or spiral snakes admits the same adaptation to the pancakes considered for this case. The only difference is that one must take the arcs $\theta_i$ in the connected components of $(X\cap\mathcal{H}_{a,\beta}(\gamma_1))\setminus\{0\}$ ($a>0$ small enough) instead in nodal zones.
\end{proof}

\begin{Prop}\label{Prop: canonical-circular-snakes-with-nodes}
    Let $X$ and $X'$ be circular $\beta$-snakes with respective nodal zones $N$, $N'$ and orientations $\varepsilon$, $\varepsilon'$. Let $h: X\to X'$ be a weakly outer bi-Lipschitz map such that $h(N)=N'$ and the orientation of $h(X')$ is $\varepsilon'$. Suppose that $\{X_i\}_{i=1}^{q}$, $\{X_i'\}_{i=1}^{q'}$ are the greedy decompositions of $X$ and $X'$ with respect to $(N,\varepsilon)$ and $(N',\varepsilon')$, respectively (see Definition \ref{Def: Greedy decomposition in circ Snakes with nodes}). Then, $q=q'$ and $\{X_i\}_{i=1}^{q}$, $\{X_i'\}_{i=1}^{q}$ are weakly outer bi-Lipschitz equivalent.
\end{Prop}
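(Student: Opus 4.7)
The plan is to mimic the strategy of Proposition \ref{Prop: canonical-snakes}, with the appropriate replacement of the snake realization theorem by its circular analogue. First, I would invoke Theorem \ref{Teo:weak equivalence} applied to $X$ and $X'$: the hypothesis that $h$ is a weakly outer bi-Lipschitz homeomorphism with $h(N)=N'$ and $h_*\varepsilon=\varepsilon'$ forces $X$ and $X'$ to share a common circular snake name with respect to the chosen pairs $(N,\varepsilon)$ and $(N',\varepsilon')$. In particular $h$ induces one-to-one correspondences between the nodes, nodal zones, segments, and clusters of the cluster partitions of $X$ and $X'$.

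Second, since the minimal sequence only depends on the circular snake name (see Remark \ref{Rem: minimal-from-circular-name}), the minimal sequences of $X$ with respect to $(N,\varepsilon)$ and of $X'$ with respect to $(N',\varepsilon')$ coincide, giving $q=q'$. Moreover, by construction of a greedy decomposition (Definition \ref{Def: Greedy decomposition in circ Snakes with nodes}), the boundary arcs $\lambda_i$ of $X_i$ and $\lambda'_i$ of $X'_i$ lie in segments $S_{j_{k+i}}\subset X$ and $S'_{j_{k+i}}\subset X'$ which are in correspondence under $h$.

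Third, I would build a new weakly outer bi-Lipschitz homeomorphism $\tilde h\colon X\to X'$ satisfying $\tilde h(X_i)=X'_i$ for every $i$. The natural way to do this is to start from the canonical pancake decomposition of $X$ and $X'$ provided by Corollary 3.37 of \cite{circular-snakes}, whose pancakes are exactly the segments and nodes enumerated along the chosen orientation. Gabrielov--Souza's construction (in the version proved in \cite{circular-snakes} for Theorem 8.3) already produces a weakly outer bi-Lipschitz homeomorphism between each canonical pancake of $X$ and the corresponding one in $X'$; I would glue these pancake-wise homeomorphisms along their common boundary arcs to obtain $\tilde h$. Whenever some $\lambda_i$ happens to be an interior arc of one of those canonical pancakes $P=T(\theta_{i-1},\theta_i)$ (and correspondingly $\lambda'_i\in I(P')$), I split $P$ into $T(\theta_{i-1},\lambda_i)\cup T(\lambda_i,\theta_i)$ and $P'$ into $T(\theta'_{i-1},\lambda'_i)\cup T(\lambda'_i,\theta'_i)$, and define the restriction of $\tilde h$ on each half as a weakly outer bi-Lipschitz homeomorphism sending $\lambda_i$ to $\lambda'_i$.

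The main obstacle, exactly as in the proof of Proposition \ref{Prop: canonical-snakes}, is verifying that the pancake-wise maps can indeed be chosen so that they split through the prescribed arcs $\lambda_i$ and $\lambda'_i$ and glue continuously into a bi-Lipschitz map on the whole surface. In the circular setting this requires a bit of extra care because the enumeration of pancakes is cyclic, so the gluing step must also be performed at the pancake containing $N=N_1$; however, since the correspondence of segments and nodal zones is already cyclic-compatible by Theorem \ref{Teo:weak equivalence}, no new combinatorial issue arises, and the argument closes as in the snake case.
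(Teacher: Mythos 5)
Your proposal is correct and follows essentially the same route as the paper's proof: the correspondence of nodes, nodal zones, segments and clusters from Theorem \ref{Teo:weak equivalence} gives equality of the minimal sequences (hence $q=q'$), and then the weakly outer bi-Lipschitz map $\tilde h$ is built by gluing the pancake-wise homeomorphisms from the canonical decomposition with boundary arcs in nodal zones, splitting any pancake containing some $\lambda_i$ as an interior arc, exactly as in Proposition \ref{Prop: canonical-snakes}. The only cosmetic difference is that you cite Corollary 3.37 of \cite{circular-snakes} where the paper invokes Proposition 4.56 of \cite{GabrielovSouza}, but these give the same canonical pancakes in this setting.
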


\begin{proof}
    It follows from Theorem 8.3 in \cite{circular-snakes} (See Theorem \ref{Teo:weak equivalence}) that, starting from $N$ and $N'$ and following the orientations in $X$ and $X'$, respectively, we have a one-to-one correspondence between the nodes, nodal zones, segments and the clusters of cluster partitions of $X$ and $X'$. Therefore, by the construction in the definition of the fundamental and minimal sequences of $X$ and $X'$, we necessarily obtain the boundary arcs $\lambda_i$ and $\lambda'_i$ in corresponding segments of $X$ and $X'$, respectively, with respect to $h$. Hence, their fundamental and minimal sequences are the same. In particular, $q=q'$. 

    In the same fashion as the proof of Proposition \ref{Prop: canonical-snakes}, choose one arc in each interior nodal zone of $X$ and $X'$, respectively, and consider pancake decompositions for $X$ and $X'$ such that those arcs are the boundary arcs of the pancakes. Enumerate the chosen arcs accordingly to the orientations and the initial nodal zones which provide the correspondence between the nodes, nodal zones, segments, and the clusters in the cluster partitions of $X$ and $X'$. If $\theta_1,\theta_1,\dots$ are the arcs in $X$ and $\theta_0',\theta_1',\dots,$ are the arcs in $X'$ following such enumeration (with $\theta_1 \in N$, $\theta_1' \in N'$, $\theta_i'$ is on the same nodal zone corresponding to the nodal zone containing $\theta_i$ and $\theta_{i+m}=\theta_{i}$, $\theta_{i+m}'=\theta_{i}'$, for all $i$, where $m$ is the number of nodal zones of $X$ and $X'$), let us denote by $\{P_i=T(\theta_{i-1},\theta_i)\}_{i=1}^{m}$ and $\{P_i'=T(\theta_{i-1}',\theta_i')\}_{i=1}^{m}$ the mentioned pancake decompositions for $X$ and $X'$, respectively. It follows from Proposition 4.56 in \cite{GabrielovSouza} that each segment $X$ (resp., $X'$) correspond to one of the pancakes of $P_i$ (resp., $P'_i$). Finally, they provide a weakly outer bi-Lipschitz homeomorphism for each pair of correspondent pancakes $P_i$ and $P'_i$. The desired weakly outer bi-Lipschitz map between $X$ and $X'$ comes from the gluing of those homeomorphisms for each pair of corresponding pancakes. Since we already know that the boundary arcs of $\{X_i\}_{i=1}^{q}$ and $\{X_i'\}_{i=1}^{q}$ are in corresponding segments, the rest of the proof is the same as the proof of Proposition \ref{Prop: canonical-snakes}. 
\end{proof}

\begin{Prop}\label{Prop: canonicity for circular snakes with nodal zones}
    Let $X$ and $X'$ be circular $\beta$-snakes without nodal zones with the same multiplicity $m$. If $\{X_i\}_{i=1}^{m+1}$ and $\{X_i'\}_{i=1}^{m+1}$ are greedy decompositions of $X$ and $X'$, respectively (see Definition \ref{Def: Greedy decomposition in circ Snakes no nodes}), then $\{X_i\}_{i=1}^{m+1}$ and $\{X_i'\}_{i=1}^{m+1}$ are weakly outer bi-Lipschitz equivalent.
\end{Prop}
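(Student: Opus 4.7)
The approach is to upgrade the weakly outer bi-Lipschitz equivalence provided by Theorem \ref{Teo:Multiplicity without Nodes} to one that respects the given greedy decompositions. Let $\gamma = \lambda_0 = \lambda_{m+1}$ and $\gamma' = \lambda_0' = \lambda_{m+1}'$ be the bases of the greedy decompositions of $X$ and $X'$ respectively, and let $\{T_i\}_{i=1}^m$ and $\{T_i'\}_{i=1}^m$ be the corresponding minimal partitions (Definition \ref{Def: Minimal-partition-without-nodal-zones}).

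First, the plan is to establish a homogeneity property for circular $\beta$-snakes without nodal zones: for any two arcs $\delta_1, \delta_2 \in V(X')$, there exists a weakly outer bi-Lipschitz self-map $\psi\colon X' \to X'$ with $\psi(\delta_1) = \delta_2$, and an orientation-reversing self-map also exists. The argument is to cut $X'$ along each $\delta_i$, obtaining a non-singular $\beta$-H\"older triangle whose weakly outer bi-Lipschitz type depends only on the multiplicity of $\delta_i$, which is always $m$. The two cut germs are therefore weakly outer equivalent by (the snake analogue of) Theorem \ref{Teo:weak equivalence}, and gluing this equivalence along the cut arcs produces the desired $\psi$.

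With the homogeneity available, apply Theorem \ref{Teo:Multiplicity without Nodes} to obtain a weakly outer bi-Lipschitz map $h_0\colon X \to X'$ and then post-compose with a suitable self-map from the homogeneity property so that $h_0(\gamma) = \gamma'$. Since $h_0$ preserves the relation $\tord > \beta$, the images $h_0(\gamma_i)$ of the cyclically ordered arcs $\gamma_1, \dots, \gamma_m$ defining the minimal partition of $X$ based at $\gamma$ must satisfy precisely the defining conditions of Definition \ref{Def: Minimal-partition-without-nodal-zones} for the minimal partition of $X'$ based at $\gamma'$. After possibly composing again with an orientation-reversing self-map, one obtains $h_0(T_i) = T_i'$ for every $i = 1, \dots, m$.

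The final step is to modify $h_0$ inside each $T_i$ so that $\lambda_i \mapsto \lambda_i'$. Both are generic arcs of their respective H\"older triangles (Remark \ref{Rem: generic arcs of a non-singular HT}), and every generic arc of $T_i'$ has the same outer tangency profile relative to the boundary arcs $\gamma_i', \gamma_{i+1}'$ and to arcs lying in the other pieces $T_j'$ of the minimal partition; in particular, the relation $\tord > \beta$ is insensitive to which generic arc is chosen. This allows the construction of a weakly outer bi-Lipschitz self-map of $T_i'$ that fixes the boundary arcs pointwise and sends $h_0(\lambda_i)$ to $\lambda_i'$. Composing these local modifications with $h_0$ yields the desired map $\tilde h\colon X \to X'$ with $\tilde h(X_i) = X_i'$ for every $i = 1, \dots, m+1$. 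The principal obstacle is the homogeneity property of paragraph two, which requires carefully verifying that the germ obtained by cutting $X'$ along an arc has a weakly outer type depending only on $m$; once this is in place, the alignment of minimal partitions and the adjustment of the interior generic arcs are straightforward consequences of Definition \ref{Def:weak equivalence}.
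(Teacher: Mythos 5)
Your overall strategy is the same as the paper's: obtain a weak equivalence $X\to X'$ that matches the base arcs and orientations, and then adjust it piece by piece so that the boundary arcs $\lambda_i$ of the greedy pancakes go to the $\lambda_i'$. Where you differ is in how the base-matching map is obtained. The paper simply invokes Theorem \ref{Teo:Multiplicity without Nodes} in a refined form (a weakly outer bi-Lipschitz $h$ with $h(\gamma)=\gamma'$ and prescribed orientation, coming from the proof in \cite{circular-snakes}) and then reuses the spiral-snake adaptation of Proposition \ref{Prop: canonical-snakes}, taking the auxiliary arcs $\theta_i$ in the connected components of $\mathcal{H}X_{a,\beta}(\gamma)\setminus\{0\}$. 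You instead manufacture that refinement yourself via a homogeneity (arc-transitivity and orientation-reversal) property of $X'$, proved by cutting along an arc and comparing the resulting spiral snakes through the snake classification. This is a legitimate alternative, and it makes explicit something the paper leaves implicit; what it costs is that you must actually carry out two verifications that the paper avoids by citation: (i) that the spiral snake obtained by cutting a multiplicity-$m$ circular snake without nodal zones along any arc has weakly outer type depending only on $m$ — note that Theorem \ref{Teo:weak equivalence} as stated concerns the nodal/segment/cluster data, so you must check that for these spiral snakes (all of whose nodal zones lie in a single node created by the cut) the cluster partitions are forced by $m$, and that the equivalence can be chosen to match boundary arcs; and (ii) that gluing the two cut equivalences back along the removed horn triangles yields a map that is still weakly outer, i.e.\ the condition $\tord>\beta$ is preserved for pairs of arcs separated by the cut. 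These are essentially the computations behind Theorem 8.4 of \cite{circular-snakes}, so your route amounts to re-deriving that refinement rather than quoting it.

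Two smaller imprecisions to repair. First, the assertion $h_0(T_i)=T_i'$ is too strong: weak outer equivalence only guarantees that $h_0(\gamma_i)$ and $\gamma_i'$ lie in the same connected component of $\mathcal{H}X'_{a,\beta}(\gamma')\setminus\{0\}$ (equivalently $\tord(h_0(\gamma_i),\gamma_i')>\beta$), so the pieces of the two minimal partitions agree only up to a discrepancy near their boundary arcs; your final adjustment step must absorb this discrepancy as well, not only move $h_0(\lambda_i)$ to $\lambda_i'$. This is harmless because what the proposition requires is $\tilde h(\lambda_i)=\lambda_i'$ for the pancake boundaries, but the write-up should say so. Second, in the last step the arcs $h_0(\lambda_i)$ and $\lambda_i'$ need not both be generic arcs of the same Hölder triangle with the same inner position, so the self-map of $T_i'$ fixing the boundary arcs and moving one to the other should be justified along the lines of the pancake-splitting argument in the proof of Proposition \ref{Prop: canonical-snakes} (map $T(\gamma_i',h_0(\lambda_i))$ to $T(\gamma_i',\lambda_i')$ and $T(h_0(\lambda_i),\gamma_{i+1}')$ to $T(\lambda_i',\gamma_{i+1}')$ and glue), rather than by an appeal to equal ``tangency profiles''. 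With these points addressed, your argument is a correct, somewhat more self-contained variant of the paper's proof.
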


\begin{proof}
    Suppose that $\{X_i\}_{i=1}^{m+1}$ and $\{X_i'\}_{i=1}^{m+1}$ are greedy decompositions of $X$ and $X'$ with bases $\gamma \in V(X)$ and $\gamma'\in V(X')$, respectively. Since $X$ and $X'$ are circular $\beta$-snakes without nodal zones and have the same multiplicity $m$, by Theorem 8.4 in \cite{circular-snakes}, there is a weakly outer bi-Lipschitz map $h:X \to X'$ such that $h(\gamma)=\gamma'$ and $h$ induces in $X'$ the same orientation of $X_1=T(\gamma_1,\gamma_2)$. By taking arcs $\theta_i$ in the respective connected components $T_i$ of $(X\cap\mathcal{H}_{a,\beta}(\gamma))\setminus\{0\}$ ($a>0$ small enough), such that the link of $T_1,\dots,T_m$ are in this order on the link of $X$ (following the given orientation in $X$), $\theta_1=\gamma$ and $\theta_{m+1}=\theta_1$, the proof now follows analogously as the proof of Proposition \ref{Prop: canonical-snakes} for spiral snakes.
\end{proof}

\begin{Exam}\label{Exam1}
 Let $X=T(\gamma_1,\gamma_2)$ be a $\beta$-snake oriented from $\gamma_1$ to $\gamma_2$, whose snake name is $W=[abacdbcd]$. Let $\{X_i\}_{i=1}^3$, with $X_i=T(\lambda_{i-1},\lambda_i)$, be the greedy decomposition of $X$ (see Figure \ref{9}a). Notice that $X_1$, $X_2$, and $X_3$ contain 2, 4, and 2 nodal zones, respectively. On the other hand, let $X'=T(\gamma_2,\gamma_1)$ be the same $\beta$-snake, but now oriented from $\gamma_2$ to $\gamma_1$. Let $\{X_i'\}_{i=1}^3$, with $X_i'=T(\lambda_{i-1}',\lambda_i')$, be the greedy decomposition of $X'$ (see Figure \ref{9}b). Notice that $X_1'$, $X_2'$, and $X_3'$ contain 3, 4, and 1 nodal zones, respectively. 

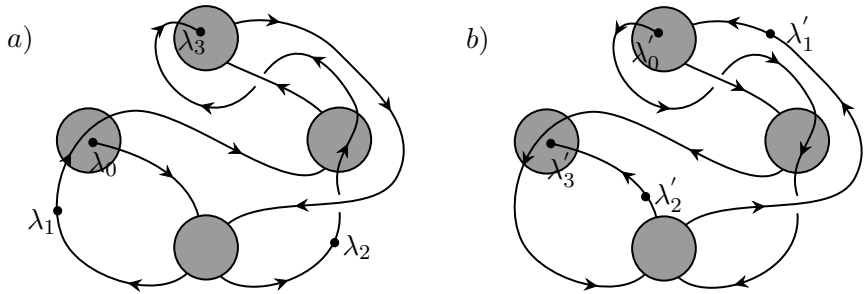
\begin{figure}[h!]
 \centering

\tikzset{every picture/.style={line width=0.75pt}} 

\begin{tikzpicture}[x=0.75pt,y=0.75pt,yscale=-0.8,xscale=0.8]

\draw  [draw opacity=0][fill={rgb, 255:red, 155; green, 155; blue, 155 }  ,fill opacity=0.47 ] (113.6,102.87) .. controls (113.6,91.68) and (122.58,82.61) .. (133.66,82.61) .. controls (144.74,82.61) and (153.73,91.68) .. (153.73,102.87) .. controls (153.73,114.05) and (144.74,123.12) .. (133.66,123.12) .. controls (122.58,123.12) and (113.6,114.05) .. (113.6,102.87) -- cycle ;
\draw    (136.3,104.15) .. controls (171.3,111.65) and (209.8,130.65) .. (203.8,170.65) .. controls (197.8,210.65) and (131.3,194.15) .. (117.8,165.15) .. controls (104.3,136.15) and (123.8,86.15) .. (161.8,84.15) .. controls (199.8,82.15) and (273.8,154.15) .. (288.3,111.15) .. controls (302.8,68.15) and (191.8,57.15) .. (210.3,38.15) .. controls (228.8,19.15) and (270.3,26.15) .. (289.8,46.15) .. controls (309.3,66.15) and (344.8,94.65) .. (327.8,126.15) .. controls (310.8,157.65) and (221.8,127.65) .. (212.3,170.15) .. controls (202.8,212.65) and (295.8,198.15) .. (291.8,148.15) ;
\draw [shift={(186.6,126.19)}, rotate = 218.89] [fill={rgb, 255:red, 0; green, 0; blue, 0 }  ][line width=0.08]  [draw opacity=0] (7.14,-3.43) -- (0,0) -- (7.14,3.43) -- (4.74,0) -- cycle    ;
\draw [shift={(159.03,192.95)}, rotate = 10.9] [fill={rgb, 255:red, 0; green, 0; blue, 0 }  ][line width=0.08]  [draw opacity=0] (7.14,-3.43) -- (0,0) -- (7.14,3.43) -- (4.74,0) -- cycle    ;
\draw [shift={(122.25,111.14)}, rotate = 117.71] [fill={rgb, 255:red, 0; green, 0; blue, 0 }  ][line width=0.08]  [draw opacity=0] (7.14,-3.43) -- (0,0) -- (7.14,3.43) -- (4.74,0) -- cycle    ;
\draw [shift={(230.08,111.13)}, rotate = 207.04] [fill={rgb, 255:red, 0; green, 0; blue, 0 }  ][line width=0.08]  [draw opacity=0] (7.14,-3.43) -- (0,0) -- (7.14,3.43) -- (4.74,0) -- cycle    ;
\draw [shift={(252.05,67.57)}, rotate = 25.11] [fill={rgb, 255:red, 0; green, 0; blue, 0 }  ][line width=0.08]  [draw opacity=0] (7.14,-3.43) -- (0,0) -- (7.14,3.43) -- (4.74,0) -- cycle    ;
\draw [shift={(253.76,28.15)}, rotate = 189.22] [fill={rgb, 255:red, 0; green, 0; blue, 0 }  ][line width=0.08]  [draw opacity=0] (7.14,-3.43) -- (0,0) -- (7.14,3.43) -- (4.74,0) -- cycle    ;
\draw [shift={(323.83,83.88)}, rotate = 236.38] [fill={rgb, 255:red, 0; green, 0; blue, 0 }  ][line width=0.08]  [draw opacity=0] (7.14,-3.43) -- (0,0) -- (7.14,3.43) -- (4.74,0) -- cycle    ;
\draw [shift={(264.18,144.24)}, rotate = 355.93] [fill={rgb, 255:red, 0; green, 0; blue, 0 }  ][line width=0.08]  [draw opacity=0] (7.14,-3.43) -- (0,0) -- (7.14,3.43) -- (4.74,0) -- cycle    ;
\draw [shift={(259.02,191.25)}, rotate = 161.14] [fill={rgb, 255:red, 0; green, 0; blue, 0 }  ][line width=0.08]  [draw opacity=0] (7.14,-3.43) -- (0,0) -- (7.14,3.43) -- (4.74,0) -- cycle    ;
\draw  [draw opacity=0][fill={rgb, 255:red, 155; green, 155; blue, 155 }  ,fill opacity=0.47 ] (187.6,170.37) .. controls (187.6,159.18) and (196.58,150.11) .. (207.66,150.11) .. controls (218.74,150.11) and (227.73,159.18) .. (227.73,170.37) .. controls (227.73,181.55) and (218.74,190.62) .. (207.66,190.62) .. controls (196.58,190.62) and (187.6,181.55) .. (187.6,170.37) -- cycle ;
\draw  [draw opacity=0][fill={rgb, 255:red, 155; green, 155; blue, 155 }  ,fill opacity=0.47 ] (187.6,38.37) .. controls (187.6,27.18) and (196.58,18.11) .. (207.66,18.11) .. controls (218.74,18.11) and (227.73,27.18) .. (227.73,38.37) .. controls (227.73,49.55) and (218.74,58.62) .. (207.66,58.62) .. controls (196.58,58.62) and (187.6,49.55) .. (187.6,38.37) -- cycle ;
\draw  [draw opacity=0][fill={rgb, 255:red, 155; green, 155; blue, 155 }  ,fill opacity=0.47 ] (271.6,102.37) .. controls (271.6,91.18) and (280.58,82.11) .. (291.66,82.11) .. controls (302.74,82.11) and (311.73,91.18) .. (311.73,102.37) .. controls (311.73,113.55) and (302.74,122.62) .. (291.66,122.62) .. controls (280.58,122.62) and (271.6,113.55) .. (271.6,102.37) -- cycle ;
\draw    (291.8,137.15) .. controls (283.6,111.3) and (310.3,97.65) .. (301.3,79.65) .. controls (292.3,61.65) and (267.3,30.65) .. (244.3,60.65) ;
\draw [shift={(296.85,105.46)}, rotate = 116.6] [fill={rgb, 255:red, 0; green, 0; blue, 0 }  ][line width=0.08]  [draw opacity=0] (7.14,-3.43) -- (0,0) -- (7.14,3.43) -- (4.74,0) -- cycle    ;
\draw [shift={(274.61,50.32)}, rotate = 27.08] [fill={rgb, 255:red, 0; green, 0; blue, 0 }  ][line width=0.08]  [draw opacity=0] (7.14,-3.43) -- (0,0) -- (7.14,3.43) -- (4.74,0) -- cycle    ;
\draw    (238.8,68.65) .. controls (216.8,95.15) and (188.8,77.65) .. (179.8,59.65) .. controls (170.8,41.65) and (175.8,15.65) .. (205.8,35.65) ;
\draw [shift={(204.43,80.84)}, rotate = 14.25] [fill={rgb, 255:red, 0; green, 0; blue, 0 }  ][line width=0.08]  [draw opacity=0] (7.14,-3.43) -- (0,0) -- (7.14,3.43) -- (4.74,0) -- cycle    ;
\draw [shift={(181.27,30.15)}, rotate = 137.81] [fill={rgb, 255:red, 0; green, 0; blue, 0 }  ][line width=0.08]  [draw opacity=0] (7.14,-3.43) -- (0,0) -- (7.14,3.43) -- (4.74,0) -- cycle    ;
\draw  [fill={rgb, 255:red, 0; green, 0; blue, 0 }  ,fill opacity=1 ] (134.06,104.15) .. controls (134.06,102.86) and (135.06,101.81) .. (136.3,101.81) .. controls (137.54,101.81) and (138.54,102.86) .. (138.54,104.15) .. controls (138.54,105.44) and (137.54,106.49) .. (136.3,106.49) .. controls (135.06,106.49) and (134.06,105.44) .. (134.06,104.15) -- cycle ;
\draw  [fill={rgb, 255:red, 0; green, 0; blue, 0 }  ,fill opacity=1 ] (111.84,147.26) .. controls (111.84,145.97) and (112.84,144.92) .. (114.08,144.92) .. controls (115.32,144.92) and (116.32,145.97) .. (116.32,147.26) .. controls (116.32,148.55) and (115.32,149.6) .. (114.08,149.6) .. controls (112.84,149.6) and (111.84,148.55) .. (111.84,147.26) -- cycle ;
\draw  [fill={rgb, 255:red, 0; green, 0; blue, 0 }  ,fill opacity=1 ] (286.46,167.35) .. controls (286.46,166.06) and (287.46,165.01) .. (288.7,165.01) .. controls (289.94,165.01) and (290.94,166.06) .. (290.94,167.35) .. controls (290.94,168.64) and (289.94,169.69) .. (288.7,169.69) .. controls (287.46,169.69) and (286.46,168.64) .. (286.46,167.35) -- cycle ;
\draw  [fill={rgb, 255:red, 0; green, 0; blue, 0 }  ,fill opacity=1 ] (201.96,34.35) .. controls (201.96,33.06) and (202.96,32.01) .. (204.2,32.01) .. controls (205.44,32.01) and (206.44,33.06) .. (206.44,34.35) .. controls (206.44,35.64) and (205.44,36.69) .. (204.2,36.69) .. controls (202.96,36.69) and (201.96,35.64) .. (201.96,34.35) -- cycle ;
\draw  [draw opacity=0][fill={rgb, 255:red, 155; green, 155; blue, 155 }  ,fill opacity=0.47 ] (402.26,103.53) .. controls (402.26,92.35) and (411.25,83.28) .. (422.33,83.28) .. controls (433.41,83.28) and (442.39,92.35) .. (442.39,103.53) .. controls (442.39,114.72) and (433.41,123.79) .. (422.33,123.79) .. controls (411.25,123.79) and (402.26,114.72) .. (402.26,103.53) -- cycle ;
\draw    (424.97,104.82) .. controls (459.97,112.32) and (498.47,131.32) .. (492.47,171.32) .. controls (486.47,211.32) and (419.97,194.82) .. (406.47,165.82) .. controls (392.97,136.82) and (412.47,86.82) .. (450.47,84.82) .. controls (488.47,82.82) and (562.47,154.82) .. (576.97,111.82) .. controls (591.47,68.82) and (480.47,57.82) .. (498.97,38.82) .. controls (517.47,19.82) and (558.97,26.82) .. (578.47,46.82) .. controls (597.97,66.82) and (633.47,95.32) .. (616.47,126.82) .. controls (599.47,158.32) and (510.47,128.32) .. (500.97,170.82) .. controls (491.47,213.32) and (584.47,198.82) .. (580.47,148.82) ;
\draw [shift={(469.97,122.79)}, rotate = 36] [fill={rgb, 255:red, 0; green, 0; blue, 0 }  ][line width=0.08]  [draw opacity=0] (7.14,-3.43) -- (0,0) -- (7.14,3.43) -- (4.74,0) -- cycle    ;
\draw [shift={(453.99,194.67)}, rotate = 187.96] [fill={rgb, 255:red, 0; green, 0; blue, 0 }  ][line width=0.08]  [draw opacity=0] (7.14,-3.43) -- (0,0) -- (7.14,3.43) -- (4.74,0) -- cycle    ;
\draw [shift={(407.8,118.17)}, rotate = 294.5] [fill={rgb, 255:red, 0; green, 0; blue, 0 }  ][line width=0.08]  [draw opacity=0] (7.14,-3.43) -- (0,0) -- (7.14,3.43) -- (4.74,0) -- cycle    ;
\draw [shift={(512.63,108.66)}, rotate = 27.37] [fill={rgb, 255:red, 0; green, 0; blue, 0 }  ][line width=0.08]  [draw opacity=0] (7.14,-3.43) -- (0,0) -- (7.14,3.43) -- (4.74,0) -- cycle    ;
\draw [shift={(547.02,71.26)}, rotate = 206.09] [fill={rgb, 255:red, 0; green, 0; blue, 0 }  ][line width=0.08]  [draw opacity=0] (7.14,-3.43) -- (0,0) -- (7.14,3.43) -- (4.74,0) -- cycle    ;
\draw [shift={(536.07,27.95)}, rotate = 6.34] [fill={rgb, 255:red, 0; green, 0; blue, 0 }  ][line width=0.08]  [draw opacity=0] (7.14,-3.43) -- (0,0) -- (7.14,3.43) -- (4.74,0) -- cycle    ;
\draw [shift={(608.57,78.88)}, rotate = 54.07] [fill={rgb, 255:red, 0; green, 0; blue, 0 }  ][line width=0.08]  [draw opacity=0] (7.14,-3.43) -- (0,0) -- (7.14,3.43) -- (4.74,0) -- cycle    ;
\draw [shift={(559.74,144.44)}, rotate = 176.23] [fill={rgb, 255:red, 0; green, 0; blue, 0 }  ][line width=0.08]  [draw opacity=0] (7.14,-3.43) -- (0,0) -- (7.14,3.43) -- (4.74,0) -- cycle    ;
\draw [shift={(541.24,193.9)}, rotate = 344.3] [fill={rgb, 255:red, 0; green, 0; blue, 0 }  ][line width=0.08]  [draw opacity=0] (7.14,-3.43) -- (0,0) -- (7.14,3.43) -- (4.74,0) -- cycle    ;
\draw  [draw opacity=0][fill={rgb, 255:red, 155; green, 155; blue, 155 }  ,fill opacity=0.47 ] (476.26,171.03) .. controls (476.26,159.85) and (485.25,150.78) .. (496.33,150.78) .. controls (507.41,150.78) and (516.39,159.85) .. (516.39,171.03) .. controls (516.39,182.22) and (507.41,191.29) .. (496.33,191.29) .. controls (485.25,191.29) and (476.26,182.22) .. (476.26,171.03) -- cycle ;
\draw  [draw opacity=0][fill={rgb, 255:red, 155; green, 155; blue, 155 }  ,fill opacity=0.47 ] (476.26,39.03) .. controls (476.26,27.85) and (485.25,18.78) .. (496.33,18.78) .. controls (507.41,18.78) and (516.39,27.85) .. (516.39,39.03) .. controls (516.39,50.22) and (507.41,59.29) .. (496.33,59.29) .. controls (485.25,59.29) and (476.26,50.22) .. (476.26,39.03) -- cycle ;
\draw  [draw opacity=0][fill={rgb, 255:red, 155; green, 155; blue, 155 }  ,fill opacity=0.47 ] (560.26,103.03) .. controls (560.26,91.85) and (569.25,82.78) .. (580.33,82.78) .. controls (591.41,82.78) and (600.39,91.85) .. (600.39,103.03) .. controls (600.39,114.22) and (591.41,123.29) .. (580.33,123.29) .. controls (569.25,123.29) and (560.26,114.22) .. (560.26,103.03) -- cycle ;
\draw    (580.47,137.82) .. controls (572.27,111.97) and (598.97,98.32) .. (589.97,80.32) .. controls (580.97,62.32) and (555.97,31.32) .. (532.97,61.32) ;
\draw [shift={(582.61,112.16)}, rotate = 294.73] [fill={rgb, 255:red, 0; green, 0; blue, 0 }  ][line width=0.08]  [draw opacity=0] (7.14,-3.43) -- (0,0) -- (7.14,3.43) -- (4.74,0) -- cycle    ;
\draw [shift={(568.84,54.21)}, rotate = 213.04] [fill={rgb, 255:red, 0; green, 0; blue, 0 }  ][line width=0.08]  [draw opacity=0] (7.14,-3.43) -- (0,0) -- (7.14,3.43) -- (4.74,0) -- cycle    ;
\draw    (527.47,69.32) .. controls (505.47,95.82) and (477.47,78.32) .. (468.47,60.32) .. controls (459.47,42.32) and (464.47,16.32) .. (494.47,36.32) ;
\draw [shift={(499.79,82.78)}, rotate = 187.33] [fill={rgb, 255:red, 0; green, 0; blue, 0 }  ][line width=0.08]  [draw opacity=0] (7.14,-3.43) -- (0,0) -- (7.14,3.43) -- (4.74,0) -- cycle    ;
\draw [shift={(465.91,35.86)}, rotate = 299.31] [fill={rgb, 255:red, 0; green, 0; blue, 0 }  ][line width=0.08]  [draw opacity=0] (7.14,-3.43) -- (0,0) -- (7.14,3.43) -- (4.74,0) -- cycle    ;
\draw  [fill={rgb, 255:red, 0; green, 0; blue, 0 }  ,fill opacity=1 ] (422.73,104.82) .. controls (422.73,103.53) and (423.73,102.48) .. (424.97,102.48) .. controls (426.2,102.48) and (427.21,103.53) .. (427.21,104.82) .. controls (427.21,106.11) and (426.2,107.15) .. (424.97,107.15) .. controls (423.73,107.15) and (422.73,106.11) .. (422.73,104.82) -- cycle ;
\draw  [fill={rgb, 255:red, 0; green, 0; blue, 0 }  ,fill opacity=1 ] (482.5,138.33) .. controls (482.5,137.04) and (483.51,135.99) .. (484.74,135.99) .. controls (485.98,135.99) and (486.99,137.04) .. (486.99,138.33) .. controls (486.99,139.62) and (485.98,140.67) .. (484.74,140.67) .. controls (483.51,140.67) and (482.5,139.62) .. (482.5,138.33) -- cycle ;
\draw  [fill={rgb, 255:red, 0; green, 0; blue, 0 }  ,fill opacity=1 ] (561.18,35.62) .. controls (561.18,34.33) and (562.19,33.28) .. (563.43,33.28) .. controls (564.66,33.28) and (565.67,34.33) .. (565.67,35.62) .. controls (565.67,36.91) and (564.66,37.95) .. (563.43,37.95) .. controls (562.19,37.95) and (561.18,36.91) .. (561.18,35.62) -- cycle ;
\draw  [fill={rgb, 255:red, 0; green, 0; blue, 0 }  ,fill opacity=1 ] (490.63,35.02) .. controls (490.63,33.73) and (491.63,32.68) .. (492.87,32.68) .. controls (494.1,32.68) and (495.11,33.73) .. (495.11,35.02) .. controls (495.11,36.31) and (494.1,37.35) .. (492.87,37.35) .. controls (491.63,37.35) and (490.63,36.31) .. (490.63,35.02) -- cycle ;

\draw (131.72,108) node [anchor=north west][inner sep=0.75pt]    {$\lambda _{0}$};
\draw (92.92,144.4) node [anchor=north west][inner sep=0.75pt]    {$\lambda _{1}$};
\draw (291.72,160.4) node [anchor=north west][inner sep=0.75pt]    {$\lambda _{2}$};
\draw (187.72,31.2) node [anchor=north west][inner sep=0.75pt]    {$\lambda _{3}$};
\draw (80.33,28.07) node [anchor=north west][inner sep=0.75pt]    {$a)$};

\draw (420.39,108.67) node [anchor=north west][inner sep=0.75pt]    {$\lambda _{3}^{'}$};
\draw (487.99,124.27) node [anchor=north west][inner sep=0.75pt]    {$\lambda _{2}^{'}$};
\draw (571.05,22.8) node [anchor=north west][inner sep=0.75pt]    {$\lambda _{1}^{'}$};
\draw (473.72,34.53) node [anchor=north west][inner sep=0.75pt]    {$\lambda _{0}^{'}$};
\draw (369.67,28.73) node [anchor=north west][inner sep=0.75pt]    {$b)$};

\end{tikzpicture}

\caption{Greedy decompositions that are not weakly outer bi-Lipschitz equivalent. Points inside shaded disks
represent arcs with the tangency order higher than the respective surface exponent.}\label{9}
\end{figure}
 
 Since weakly outer bi-Lipschitz maps preserve the number of nodal zones, we conclude that $\{X_i\}_{i=1}^3$ and $\{X_i'\}_{i=1}^3$ are not weakly outer bi-Lipschitz equivalent, although $X$ and $X'$ are weakly outer bi-Lipschitz equivalent (in fact, $X$ and $X'$ are the same surface).
\end{Exam}

\begin{Exam}\label{Exam2}
 Consider the circular $\beta$-snake $X$ of Example \ref{exemplo1-circular}. Considering the minimal sequence of $X$ with respect to $(N_a,\varepsilon_a)$ we obtain the greedy decomposition $\{X_i\}_{i=1}^4$, where $X_1$, 
 $X_2$, $X_3$, and $X_4$ contain 3, 5, 3, and 4 nodal zones, respectively (see Figure \ref{7}a). Considering the minimal sequence of $X$ with respect to $(N_b,\varepsilon_b)$ we obtain the greedy decomposition $\{X'_i\}_{i=1}^4$, where $X'_1$, 
 $X'_2$, $X'_3$, and $X'_4$ contain 3, 4, 4, and 4 nodal zones, respectively (see Figure \ref{7}b). Finally, considering the minimal sequence of $X$ with respect to $(N_c,\varepsilon_c)$, we obtain the greedy decomposition $\{X''_i\}_{i=1}^4$, where $X''_1$, 
 $X''_2$, $X''_3$, and $X''_4$ contain 3, 3, 5, and 4 nodal zones, respectively (see Figure \ref{7}c). 
 
 Since weakly outer bi-Lipschitz maps preserve the number of nodal zones, we conclude that $\{X_i\}_{i=1}^4$, $\{X_i'\}_{i=1}^4$ are not weakly outer bi-Lipschitz equivalent, and that $\{X'_i\}_{i=1}^4$, $\{X_i''\}_{i=1}^4$ are not weakly outer bi-Lipschitz equivalent. Moreover, $\{X_i\}_{i=1}^4$ and $\{X_i''\}_{i=1}^4$ are pancake decompositions whose pancakes have the same quantity of nodal zones, such quantities are not cyclically equal, since in $\{X_i\}_{i=1}^4$, the two pancakes with 3 nodal zones are not adjacent, and in $\{X_i''\}_{i=1}^4$ they are adjacent. So, $\{X_i\}_{i=1}^4$ and $\{X_i''\}_{i=1}^4$ are not weakly outer bi-Lipschitz equivalent, because weakly outer bi-Lipschitz maps preserve the number of nodal zones and their cyclic order on circular snakes.
 \end{Exam}

\section{Some final remarks on the greedy decomposition}

    The greedy pancake decomposition both for snakes and circular snakes are minimal and canonical up to weakly bi-Lipschitz equivalence. This is due to their regular behavior outside nodal zones, that is, arcs on segments always have the same multiplicity. However, the greedy algorithm can fail to give a minimal pancake decomposition for H\"older triangles in general. 

    \begin{Exam}
     
    Consider the non-snake bubble (see Definition 4.45 and Example 4.51 in \cite{GabrielovSouza}) $X=T(\lambda_0,\lambda_2)$ whose link and orientation are given in Figure \ref{10}a. If we generalize Definitions \ref{Def of multiplicity} and \ref{constant zone} for H\"older triangle and try to apply the greedy algorithm for snakes in this case, we obtain $X_1=T(\lambda_0,\lambda_1)$ and $X_2=T(\lambda_1,\lambda_2)$ as the elements of the decomposition of $X$. However, $X_1$ clearly is not a pancake, since it is not LNE. The natural way to obtain a minimal pancake decomposition for bubles in general is to consider $\lambda_1$ as an abnormal arc.
    \end{Exam}

    \begin{figure}[h!]
 \centering

\tikzset{every picture/.style={line width=0.75pt}} 

\begin{tikzpicture}[x=0.75pt,y=0.75pt,yscale=-0.9,xscale=1]

\draw    (130.95,37.58) .. controls (131.07,72.25) and (163.74,70.8) .. (202.4,70.67) .. controls (241.07,70.53) and (257.05,64.48) .. (257.75,73.81) .. controls (258.45,83.14) and (242.43,79.86) .. (202.43,80) .. controls (162.43,80.14) and (131.1,78.92) .. (131.2,108.25) ;
\draw [shift={(160.14,68.99)}, rotate = 190.68] [fill={rgb, 255:red, 0; green, 0; blue, 0 }  ][line width=0.08]  [draw opacity=0] (7.14,-3.43) -- (0,0) -- (7.14,3.43) -- (4.74,0) -- cycle    ;
\draw [shift={(234.27,69.32)}, rotate = 176] [fill={rgb, 255:red, 0; green, 0; blue, 0 }  ][line width=0.08]  [draw opacity=0] (7.14,-3.43) -- (0,0) -- (7.14,3.43) -- (4.74,0) -- cycle    ;
\draw [shift={(228.9,80.3)}, rotate = 1.09] [fill={rgb, 255:red, 0; green, 0; blue, 0 }  ][line width=0.08]  [draw opacity=0] (7.14,-3.43) -- (0,0) -- (7.14,3.43) -- (4.74,0) -- cycle    ;
\draw [shift={(156.61,82.11)}, rotate = 350.69] [fill={rgb, 255:red, 0; green, 0; blue, 0 }  ][line width=0.08]  [draw opacity=0] (7.14,-3.43) -- (0,0) -- (7.14,3.43) -- (4.74,0) -- cycle    ;
\draw  [draw opacity=0][fill={rgb, 255:red, 155; green, 155; blue, 155 }  ,fill opacity=0.47 ] (257.68,53.74) .. controls (268.86,53.71) and (277.96,62.66) .. (278,73.74) .. controls (278.04,84.82) and (269,93.83) .. (257.82,93.87) .. controls (246.63,93.91) and (237.53,84.96) .. (237.49,73.88) .. controls (237.45,62.8) and (246.49,53.78) .. (257.68,53.74) -- cycle ;
\draw  [draw opacity=0][fill={rgb, 255:red, 155; green, 155; blue, 155 }  ,fill opacity=0.47 ] (158.45,57.35) .. controls (169.64,57.31) and (178.74,66.26) .. (178.78,77.35) .. controls (178.82,88.43) and (169.78,97.44) .. (158.59,97.48) .. controls (147.41,97.52) and (138.31,88.57) .. (138.27,77.49) .. controls (138.23,66.41) and (147.27,57.39) .. (158.45,57.35) -- cycle ;
\draw  [fill={rgb, 255:red, 0; green, 0; blue, 0 }  ,fill opacity=1 ] (130.16,37.02) .. controls (131.45,37.02) and (132.5,38.02) .. (132.5,39.26) .. controls (132.51,40.49) and (131.46,41.5) .. (130.17,41.51) .. controls (128.88,41.51) and (127.83,40.51) .. (127.83,39.27) .. controls (127.82,38.04) and (128.87,37.03) .. (130.16,37.02) -- cycle ;
\draw  [fill={rgb, 255:red, 0; green, 0; blue, 0 }  ,fill opacity=1 ] (130.81,105.48) .. controls (132.1,105.47) and (133.15,106.47) .. (133.15,107.71) .. controls (133.16,108.95) and (132.11,109.96) .. (130.82,109.96) .. controls (129.53,109.96) and (128.48,108.96) .. (128.48,107.73) .. controls (128.47,106.49) and (129.52,105.48) .. (130.81,105.48) -- cycle ;
\draw  [fill={rgb, 255:red, 0; green, 0; blue, 0 }  ,fill opacity=1 ] (221.55,79.16) .. controls (222.84,79.16) and (223.89,80.16) .. (223.89,81.39) .. controls (223.9,82.63) and (222.86,83.64) .. (221.56,83.64) .. controls (220.27,83.65) and (219.22,82.65) .. (219.22,81.41) .. controls (219.21,80.17) and (220.26,79.17) .. (221.55,79.16) -- cycle ;

\draw    (400.93,127.23) .. controls (418.27,127.18) and (434,128.37) .. (434,124.37) .. controls (434,120.37) and (433.52,105.8) .. (433.47,86.47) .. controls (433.41,67.13) and (433.27,47.89) .. (438.8,47.8) .. controls (444.33,47.71) and (443.83,54.46) .. (444.13,86.47) .. controls (444.43,118.47) and (444.58,121.17) .. (444.24,125.17) ;
\draw [shift={(420.68,127.35)}, rotate = 179.83] [fill={rgb, 255:red, 0; green, 0; blue, 0 }  ][line width=0.08]  [draw opacity=0] (7.14,-3.43) -- (0,0) -- (7.14,3.43) -- (4.74,0) -- cycle    ;
\draw [shift={(433.61,102.66)}, rotate = 89.12] [fill={rgb, 255:red, 0; green, 0; blue, 0 }  ][line width=0.08]  [draw opacity=0] (7.14,-3.43) -- (0,0) -- (7.14,3.43) -- (4.74,0) -- cycle    ;
\draw [shift={(433.78,63.57)}, rotate = 92.64] [fill={rgb, 255:red, 0; green, 0; blue, 0 }  ][line width=0.08]  [draw opacity=0] (7.14,-3.43) -- (0,0) -- (7.14,3.43) -- (4.74,0) -- cycle    ;
\draw [shift={(443.98,68.81)}, rotate = 269.12] [fill={rgb, 255:red, 0; green, 0; blue, 0 }  ][line width=0.08]  [draw opacity=0] (7.14,-3.43) -- (0,0) -- (7.14,3.43) -- (4.74,0) -- cycle    ;
\draw [shift={(444.35,108.44)}, rotate = 269.41] [fill={rgb, 255:red, 0; green, 0; blue, 0 }  ][line width=0.08]  [draw opacity=0] (7.14,-3.43) -- (0,0) -- (7.14,3.43) -- (4.74,0) -- cycle    ;
\draw  [draw opacity=0][fill={rgb, 255:red, 155; green, 155; blue, 155 }  ,fill opacity=0.47 ] (355.05,122.08) .. controls (355.02,110.89) and (363.98,101.8) .. (375.06,101.77) .. controls (386.14,101.74) and (395.15,110.78) .. (395.18,121.97) .. controls (395.21,133.15) and (386.26,142.25) .. (375.18,142.28) .. controls (364.09,142.31) and (355.09,133.27) .. (355.05,122.08) -- cycle ;
\draw  [fill={rgb, 255:red, 0; green, 0; blue, 0 }  ,fill opacity=1 ] (362.77,125.14) .. controls (362.77,123.85) and (363.77,122.8) .. (365.01,122.8) .. controls (366.24,122.79) and (367.25,123.84) .. (367.25,125.13) .. controls (367.26,126.42) and (366.26,127.47) .. (365.02,127.47) .. controls (363.78,127.47) and (362.78,126.43) .. (362.77,125.14) -- cycle ;
\draw  [fill={rgb, 255:red, 0; green, 0; blue, 0 }  ,fill opacity=1 ] (442.09,124.19) .. controls (442.09,122.9) and (443.09,121.85) .. (444.33,121.85) .. controls (445.56,121.84) and (446.57,122.89) .. (446.57,124.18) .. controls (446.58,125.47) and (445.58,126.52) .. (444.34,126.52) .. controls (443.1,126.53) and (442.1,125.48) .. (442.09,124.19) -- cycle ;
\draw    (365.01,125.13) .. controls (364.01,116.47) and (370.33,114.43) .. (367,104.1) .. controls (363.67,93.77) and (318.99,67.31) .. (318,51.1) .. controls (317.01,34.89) and (340.33,21.43) .. (372,21.43) .. controls (403.67,21.43) and (439.33,29.43) .. (428.67,54.43) .. controls (418,79.43) and (397.67,82.03) .. (382,94.03) .. controls (366.33,106.03) and (383.33,126.37) .. (400.93,127.23) ;
\draw [shift={(367.76,112.16)}, rotate = 105.46] [fill={rgb, 255:red, 0; green, 0; blue, 0 }  ][line width=0.08]  [draw opacity=0] (7.14,-3.43) -- (0,0) -- (7.14,3.43) -- (4.74,0) -- cycle    ;
\draw [shift={(338.47,77.11)}, rotate = 40.67] [fill={rgb, 255:red, 0; green, 0; blue, 0 }  ][line width=0.08]  [draw opacity=0] (7.14,-3.43) -- (0,0) -- (7.14,3.43) -- (4.74,0) -- cycle    ;
\draw [shift={(341.44,26.29)}, rotate = 159.01] [fill={rgb, 255:red, 0; green, 0; blue, 0 }  ][line width=0.08]  [draw opacity=0] (7.14,-3.43) -- (0,0) -- (7.14,3.43) -- (4.74,0) -- cycle    ;
\draw [shift={(412.97,27.25)}, rotate = 198.87] [fill={rgb, 255:red, 0; green, 0; blue, 0 }  ][line width=0.08]  [draw opacity=0] (7.14,-3.43) -- (0,0) -- (7.14,3.43) -- (4.74,0) -- cycle    ;
\draw [shift={(406.45,79.58)}, rotate = 327.37] [fill={rgb, 255:red, 0; green, 0; blue, 0 }  ][line width=0.08]  [draw opacity=0] (7.14,-3.43) -- (0,0) -- (7.14,3.43) -- (4.74,0) -- cycle    ;
\draw [shift={(381.45,117.9)}, rotate = 230.37] [fill={rgb, 255:red, 0; green, 0; blue, 0 }  ][line width=0.08]  [draw opacity=0] (7.14,-3.43) -- (0,0) -- (7.14,3.43) -- (4.74,0) -- cycle    ;
\draw  [fill={rgb, 255:red, 0; green, 0; blue, 0 }  ,fill opacity=1 ] (394.03,86.09) .. controls (394.02,84.8) and (395.02,83.75) .. (396.26,83.75) .. controls (397.5,83.75) and (398.51,84.79) .. (398.51,86.08) .. controls (398.51,87.37) and (397.51,88.42) .. (396.27,88.43) .. controls (395.04,88.43) and (394.03,87.39) .. (394.03,86.09) -- cycle ;
\draw  [draw opacity=0][fill={rgb, 255:red, 155; green, 155; blue, 155 }  ,fill opacity=0.47 ] (413.72,49.19) .. controls (413.69,38) and (422.65,28.91) .. (433.73,28.88) .. controls (444.81,28.85) and (453.82,37.89) .. (453.85,49.08) .. controls (453.88,60.27) and (444.92,69.36) .. (433.84,69.39) .. controls (422.76,69.42) and (413.75,60.38) .. (413.72,49.19) -- cycle ;
\draw  [fill={rgb, 255:red, 0; green, 0; blue, 0 }  ,fill opacity=1 ] (386.09,123.67) .. controls (386.09,122.38) and (387.09,121.33) .. (388.33,121.33) .. controls (389.56,121.33) and (390.57,122.37) .. (390.57,123.66) .. controls (390.58,124.95) and (389.58,126) .. (388.34,126.01) .. controls (387.1,126.01) and (386.1,124.97) .. (386.09,123.67) -- cycle ;
\draw  [fill={rgb, 255:red, 0; green, 0; blue, 0 }  ,fill opacity=1 ] (431.76,124.37) .. controls (431.76,123.08) and (432.76,122.03) .. (433.99,122.03) .. controls (435.23,122.03) and (436.24,123.07) .. (436.24,124.36) .. controls (436.24,125.65) and (435.24,126.7) .. (434.01,126.7) .. controls (432.77,126.71) and (431.76,125.66) .. (431.76,124.37) -- cycle ;
\draw  [fill={rgb, 255:red, 0; green, 0; blue, 0 }  ,fill opacity=1 ] (436.56,47.81) .. controls (436.56,46.51) and (437.56,45.47) .. (438.79,45.46) .. controls (440.03,45.46) and (441.04,46.5) .. (441.04,47.79) .. controls (441.04,49.09) and (440.04,50.13) .. (438.81,50.14) .. controls (437.57,50.14) and (436.56,49.1) .. (436.56,47.81) -- cycle ;
\draw    (583.6,126.57) .. controls (600.93,126.52) and (616.67,127.7) .. (616.67,123.7) .. controls (616.67,119.7) and (616.19,105.13) .. (616.13,85.8) .. controls (616.08,66.47) and (615.93,47.22) .. (621.47,47.13) .. controls (627,47.04) and (626.5,53.79) .. (626.8,85.8) .. controls (627.1,117.81) and (627.24,120.5) .. (626.91,124.5) ;
\draw [shift={(596.78,126.67)}, rotate = 0.43] [fill={rgb, 255:red, 0; green, 0; blue, 0 }  ][line width=0.08]  [draw opacity=0] (7.14,-3.43) -- (0,0) -- (7.14,3.43) -- (4.74,0) -- cycle    ;
\draw [shift={(616.39,108.68)}, rotate = 268.99] [fill={rgb, 255:red, 0; green, 0; blue, 0 }  ][line width=0.08]  [draw opacity=0] (7.14,-3.43) -- (0,0) -- (7.14,3.43) -- (4.74,0) -- cycle    ;
\draw [shift={(616.2,69.47)}, rotate = 271.55] [fill={rgb, 255:red, 0; green, 0; blue, 0 }  ][line width=0.08]  [draw opacity=0] (7.14,-3.43) -- (0,0) -- (7.14,3.43) -- (4.74,0) -- cycle    ;
\draw [shift={(626.5,61.19)}, rotate = 88.54] [fill={rgb, 255:red, 0; green, 0; blue, 0 }  ][line width=0.08]  [draw opacity=0] (7.14,-3.43) -- (0,0) -- (7.14,3.43) -- (4.74,0) -- cycle    ;
\draw [shift={(626.95,101.08)}, rotate = 89.4] [fill={rgb, 255:red, 0; green, 0; blue, 0 }  ][line width=0.08]  [draw opacity=0] (7.14,-3.43) -- (0,0) -- (7.14,3.43) -- (4.74,0) -- cycle    ;
\draw  [draw opacity=0][fill={rgb, 255:red, 155; green, 155; blue, 155 }  ,fill opacity=0.47 ] (537.72,121.41) .. controls (537.69,110.23) and (546.65,101.13) .. (557.73,101.1) .. controls (568.81,101.07) and (577.82,110.11) .. (577.85,121.3) .. controls (577.88,132.49) and (568.92,141.58) .. (557.84,141.61) .. controls (546.76,141.64) and (537.75,132.6) .. (537.72,121.41) -- cycle ;
\draw  [fill={rgb, 255:red, 0; green, 0; blue, 0 }  ,fill opacity=1 ] (545.44,124.47) .. controls (545.44,123.18) and (546.44,122.13) .. (547.67,122.13) .. controls (548.91,122.13) and (549.92,123.17) .. (549.92,124.46) .. controls (549.92,125.75) and (548.92,126.8) .. (547.69,126.8) .. controls (546.45,126.81) and (545.44,125.76) .. (545.44,124.47) -- cycle ;
\draw  [fill={rgb, 255:red, 0; green, 0; blue, 0 }  ,fill opacity=1 ] (624.76,123.52) .. controls (624.76,122.23) and (625.76,121.18) .. (626.99,121.18) .. controls (628.23,121.18) and (629.24,122.22) .. (629.24,123.51) .. controls (629.24,124.8) and (628.24,125.85) .. (627.01,125.86) .. controls (625.77,125.86) and (624.76,124.82) .. (624.76,123.52) -- cycle ;
\draw    (547.68,124.47) .. controls (546.68,115.8) and (553,113.77) .. (549.67,103.43) .. controls (546.33,93.1) and (501.65,66.64) .. (500.67,50.43) .. controls (499.68,34.22) and (523,20.77) .. (554.67,20.77) .. controls (586.33,20.77) and (622,28.77) .. (611.33,53.77) .. controls (600.67,78.77) and (580.33,81.37) .. (564.67,93.37) .. controls (549,105.37) and (566,125.7) .. (583.6,126.57) ;
\draw [shift={(548.38,117.78)}, rotate = 289.93] [fill={rgb, 255:red, 0; green, 0; blue, 0 }  ][line width=0.08]  [draw opacity=0] (7.14,-3.43) -- (0,0) -- (7.14,3.43) -- (4.74,0) -- cycle    ;
\draw [shift={(526.4,80.92)}, rotate = 220.18] [fill={rgb, 255:red, 0; green, 0; blue, 0 }  ][line width=0.08]  [draw opacity=0] (7.14,-3.43) -- (0,0) -- (7.14,3.43) -- (4.74,0) -- cycle    ;
\draw [shift={(517.82,28.27)}, rotate = 335.47] [fill={rgb, 255:red, 0; green, 0; blue, 0 }  ][line width=0.08]  [draw opacity=0] (7.14,-3.43) -- (0,0) -- (7.14,3.43) -- (4.74,0) -- cycle    ;
\draw [shift={(589.32,24.62)}, rotate = 15.83] [fill={rgb, 255:red, 0; green, 0; blue, 0 }  ][line width=0.08]  [draw opacity=0] (7.14,-3.43) -- (0,0) -- (7.14,3.43) -- (4.74,0) -- cycle    ;
\draw [shift={(594.81,75.05)}, rotate = 144.29] [fill={rgb, 255:red, 0; green, 0; blue, 0 }  ][line width=0.08]  [draw opacity=0] (7.14,-3.43) -- (0,0) -- (7.14,3.43) -- (4.74,0) -- cycle    ;
\draw [shift={(560.33,111.89)}, rotate = 59.22] [fill={rgb, 255:red, 0; green, 0; blue, 0 }  ][line width=0.08]  [draw opacity=0] (7.14,-3.43) -- (0,0) -- (7.14,3.43) -- (4.74,0) -- cycle    ;
\draw  [draw opacity=0][fill={rgb, 255:red, 155; green, 155; blue, 155 }  ,fill opacity=0.47 ] (596.39,48.52) .. controls (596.36,37.34) and (605.31,28.24) .. (616.4,28.21) .. controls (627.48,28.18) and (636.49,37.23) .. (636.52,48.41) .. controls (636.55,59.6) and (627.59,68.69) .. (616.51,68.72) .. controls (605.43,68.75) and (596.42,59.71) .. (596.39,48.52) -- cycle ;
\draw  [fill={rgb, 255:red, 0; green, 0; blue, 0 }  ,fill opacity=1 ] (568.76,123.01) .. controls (568.76,121.72) and (569.76,120.67) .. (570.99,120.66) .. controls (572.23,120.66) and (573.24,121.7) .. (573.24,123) .. controls (573.24,124.29) and (572.24,125.34) .. (571.01,125.34) .. controls (569.77,125.34) and (568.76,124.3) .. (568.76,123.01) -- cycle ;
\draw  [fill={rgb, 255:red, 0; green, 0; blue, 0 }  ,fill opacity=1 ] (614.43,123.71) .. controls (614.42,122.41) and (615.42,121.37) .. (616.66,121.36) .. controls (617.9,121.36) and (618.9,122.4) .. (618.91,123.69) .. controls (618.91,124.99) and (617.91,126.03) .. (616.67,126.04) .. controls (615.44,126.04) and (614.43,125) .. (614.43,123.71) -- cycle ;
\draw  [fill={rgb, 255:red, 0; green, 0; blue, 0 }  ,fill opacity=1 ] (619.23,47.14) .. controls (619.22,45.85) and (620.22,44.8) .. (621.46,44.8) .. controls (622.7,44.79) and (623.7,45.84) .. (623.71,47.13) .. controls (623.71,48.42) and (622.71,49.47) .. (621.47,49.47) .. controls (620.24,49.47) and (619.23,48.43) .. (619.23,47.14) -- cycle ;
\draw  [fill={rgb, 255:red, 0; green, 0; blue, 0 }  ,fill opacity=1 ] (536.03,91.69) .. controls (536.02,90.4) and (537.02,89.35) .. (538.26,89.35) .. controls (539.5,89.35) and (540.51,90.39) .. (540.51,91.68) .. controls (540.51,92.97) and (539.51,94.02) .. (538.27,94.03) .. controls (537.04,94.03) and (536.03,92.99) .. (536.03,91.69) -- cycle ;
\draw  [draw opacity=0][fill={rgb, 255:red, 155; green, 155; blue, 155 }  ,fill opacity=0.47 ] (418.65,124.46) .. controls (418.62,113.27) and (427.58,104.18) .. (438.66,104.15) .. controls (449.74,104.12) and (458.75,113.16) .. (458.78,124.35) .. controls (458.81,135.53) and (449.86,144.63) .. (438.78,144.66) .. controls (427.69,144.69) and (418.69,135.64) .. (418.65,124.46) -- cycle ;
\draw  [draw opacity=0][fill={rgb, 255:red, 155; green, 155; blue, 155 }  ,fill opacity=0.47 ] (601.45,123.66) .. controls (601.42,112.47) and (610.38,103.38) .. (621.46,103.35) .. controls (632.54,103.32) and (641.55,112.36) .. (641.58,123.55) .. controls (641.61,134.73) and (632.66,143.83) .. (621.58,143.86) .. controls (610.49,143.89) and (601.49,134.84) .. (601.45,123.66) -- cycle ;

\draw (74.33,21.07) node [anchor=north west][inner sep=0.75pt]    {$a)$};
\draw (104.92,29.73) node [anchor=north west][inner sep=0.75pt]    {$\lambda _{0}$};
\draw (104.22,93.23) node [anchor=north west][inner sep=0.75pt]    {$\lambda _{2}$};
\draw (210.89,88.4) node [anchor=north west][inner sep=0.75pt]    {$\lambda _{1}$};
\draw (292.67,20.4) node [anchor=north west][inner sep=0.75pt]    {$b)$};
\draw (346.05,123.2) node [anchor=north west][inner sep=0.75pt]    {$\lambda _{0}$};
\draw (382.02,126.22) node [anchor=north west][inner sep=0.75pt]    {$\lambda _{2}$};
\draw (423.52,127.73) node [anchor=north west][inner sep=0.75pt]    {$\lambda _{3}$};
\draw (444.09,127.59) node [anchor=north west][inner sep=0.75pt]    {$\lambda _{5}$};
\draw (441.77,29.48) node [anchor=north west][inner sep=0.75pt]    {$\lambda _{4}$};
\draw (475.33,19.73) node [anchor=north west][inner sep=0.75pt]    {$c)$};
\draw (528.72,122.53) node [anchor=north west][inner sep=0.75pt]    {$\lambda _{5}$};
\draw (564.69,125.55) node [anchor=north west][inner sep=0.75pt]    {$\lambda _{3}$};
\draw (606.19,127.07) node [anchor=north west][inner sep=0.75pt]    {$\lambda _{2}$};
\draw (626.76,126.92) node [anchor=north west][inner sep=0.75pt]    {$\lambda _{0}$};
\draw (624.44,28.82) node [anchor=north west][inner sep=0.75pt]    {$\lambda _{1}$};
\draw (375.77,67.88) node [anchor=north west][inner sep=0.75pt]    {$\lambda _{1}$};
\draw (534.17,70.68) node [anchor=north west][inner sep=0.75pt]    {$\lambda _{4}$};

\end{tikzpicture}

\caption{Surfaces where the greedy algorithm fails to obtain a minimal pancake decomposition. Points inside shaded disks
represent arcs with the tangency order higher than the respective surface's exponent.}\label{10}
\end{figure}
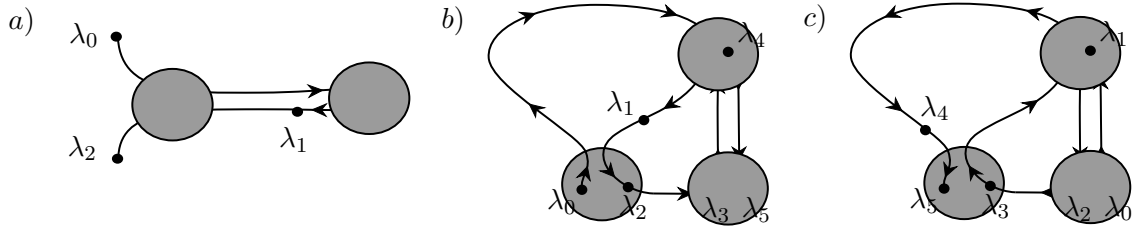

    A conjecture that naturally arise in this context of determining a minimal pancake decomposition for H\"older triangles in general is the following. Given a H\"older triangle $T$, dividing it into snakes and non-snake bubbles, considering the minimal pancake decomposition in each one of those parts and then joining adjacent pancakes which union are LNE into a new pancake will provide a minimal decomposition for $T$. Unfortunately, this is not true, as shown in Example \ref{Exam: greedy algorithm failing for HT}. This explicit that the ones intending to obtain a minimal or canonical pancake decomposition for surfaces in general will need to develop more sophisticated tools, since the greedy algorithm proved itself to be insufficient.

    \begin{Exam}\label{Exam: greedy algorithm failing for HT}
    Let $X=T(\lambda_0,\lambda_5)$ be a H\"older triangle with link as Figures \ref{10}b and \ref{10}c, where two distinct orientations are considered. In Figure \ref{10}b, it consists of a bubble snake $T(\lambda_0,\lambda_2)$, a LNE H\"older triangle $T(\lambda_2,\lambda_3)$ and a non-snake bubble $T(\lambda_3,\lambda_5)$ (the same holds for Figure \ref{10}c but with a reversed order). The arcs $\lambda_1$ and $\lambda_4$ were chosen so that, in Figure \ref{10}b, $\{X_1,X_2\}$ is a minimal pancake decomposition of the snake bubble, $\{X_3\}$ is the minimal decomposition of the LNE H\"older triangle and $\{X_4,X_5\}$ is the minimal decomposition of the non-snake bubble. A similar decomposition was considered in Figure \ref{10}c (here, $X_i=T(\lambda_{i-1},\lambda_i)$, for $i=1,\dots,5$). 
    
    In Figure \ref{10}b, $\{X_1, X_2\cup X_3\cup X_4,X_5\}$ is a minimal pancake decomposition of $X$, showing that, in this case, by joining adjacent pancakes which union were LNE into a new pancake we obtained a minimal pancake decomposition for $X$. However, in Figure \ref{10}c, the pancakes $X_1, X_2, X_3, X_4,X_5$ will not produce a minimal pancake decomposition for $X$, since  $X_2\cup X_3\cup X_4$ is not LNE.
    
    Notice that in both cases we applied the greedy algorithm for snakes to obtain the arcs $\lambda_i$, but depending on the orientation considered the conjecture was false, as seen in Figure \ref{10}c. Indeed, if we consider a H\"older triangle which link is the gluing of the link of Figure \ref{10}b at $\lambda_5$ with the link of Figure \ref{10}c at $\lambda_0$ through a LNE H\"older triangle connecting those arcs, then the conjecture will be false independently of the chosen orientation.   
    
    \end{Exam}

\begin{Exam}\label{Exam: not outer equiv pancake decomp}
    It is possible to consider the notion of outer equivalence of pancake decompositions exchanging the weakly outer bi-Lipschitz homeomorphism in Definition \ref{Def: equiv of pancake decomp} by an outer bi-Lipschitz one. Unfortunately, both the two natural candidates of pancake decompositions fail to be canonical with respect to this outer equivalence, namely, the pancake decomposition presented in Proposition 4.56 of \cite{GabrielovSouza} for snakes (as in Corollary 3.37 of \cite{circular-snakes} for circular snakes with nodal zones) and the greedy pancake decomposition in this paper. For example, let us consider two surface germs $X$ and $Y$ with links as in Figure \ref{11}a and \ref{11}b, respectively. 

    \begin{figure}[h!]
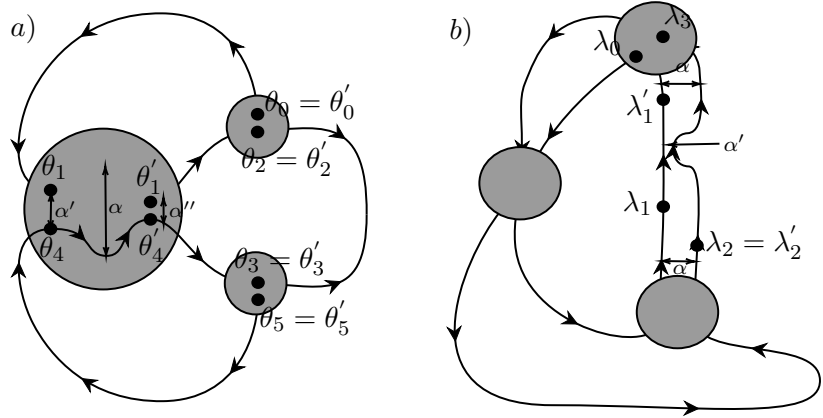

\centering

\tikzset{every picture/.style={line width=0.75pt}} 



    \caption{Surfaces where the greedy algorithm fails to obtain a minimal pancake decomposition. Points inside shaded disks
represent arcs with the tangency order higher than the respective surface's exponent.}\label{11}
\end{figure}
    
    Let $\{X_i=T(\theta_{i-1},\theta_i)\}_{i=1}^5$ and $\{X'_i=T(\theta'_{i-1},\theta'_i)\}_{i=1}^5$ be pancake decompositions of $X$ with boundary arcs represented by the dots on the link of $X$. If $\alpha''>\alpha'>\alpha>\mu(X)$ are such that $$\alpha''=\operatorname{tord}(X_1',X_4') = \operatorname{tord}(\theta'_1,\theta'_4)>\operatorname{tord}(\theta_1,\theta_4)=\operatorname{tord}(X_1,X_4)=\alpha',$$ 
    then those pancake decompositions are not outer equivalent, since outer bi-Lipschitz homeomorphisms preserve tangency orders. Analogously, let $\{Y_i=T(\lambda_{i-1},\lambda_i)\}_{i=1}^3$ and $\{Y'_i=T(\lambda'_{i-1},\lambda'_i)\}_{i=1}^3$ be greedy pancake decompositions of $Y$ with boundary arcs represented by the dots on the link of $Y$. If $\alpha'>\alpha>\mu(X)$ are such that $$\alpha'=\operatorname{tord}(Y_1',Y_3') = \operatorname{tord}(\lambda_1',Y_3')>\operatorname{tord}(\lambda_1,Y_3)=\operatorname{tord}(Y_1,Y_3)=\alpha,$$ then those pancake decompositions are not outer equivalent.
    \end{Exam}

\newpage

\end{document}